\documentclass[11pt]{article}


\usepackage{amssymb,latexsym}
\usepackage{amsmath,amscd}
\usepackage{theorem}
\usepackage[all]{xy}
\usepackage{url}
\usepackage{stmaryrd}
\usepackage{bm} 
\usepackage{graphics}


\setlength{\topmargin}{0.4 cm}
\setlength{\parindent}{16pt}
\setlength{\textwidth}{16cm}
\setlength{\textheight}{22.6cm}
\setlength{\hoffset}{-1.6cm}
\setlength{\voffset}{-1.7cm}

\setlength{\footskip}{37pt} 



\title{\bf On self-similarity of $p$-adic analytic \\ pro-$p$ groups
of small dimension}

\author{
        Francesco Noseda  \thanks{{\tt noseda@im.ufrj.br}.}
        \\[0.1cm]
        Ilir Snopce \thanks{{\tt ilir@im.ufrj.br}. Supported by CAPES
        (grant 88881.145624/2017-01), the Alexander von Humboldt Foundation, FAPERJ and CNPq.}
        \\[0.2cm]
        \footnotesize{Instituto de Matem\'atica - 
        Universidade Federal do Rio de Janeiro}\\
        \footnotesize{Avenida Athos da Silveira Ramos 149 
        (Centro de Tecnologia - Bloco C)}\\
        \footnotesize{21941-909, Rio de Janeiro,  Brazil.}
}

\date{}


\newcommand{\bb}[1]{\mathbb{#1}}
\newcommand{\cl}[1]{\mathcal{#1}}
\newcommand{\mb}[1]{\mathbf{#1}}
\newcommand{\mr}[1]{\mathrm{#1}}



\newcommand{\ra}{\rightarrow}
\newcommand{\rar}{\rightarrow}




\newcommand{\vep}{\varepsilon}

\newcommand{\les}{\leqslant}
\newcommand{\ges}{\geqslant}

\newcommand{\sylow}{\scalebox{0.5}{\mbox{\ensuremath{\displaystyle \triangle}}}}

\newcommand{\ep}{\hfill $\square$} 


\newtheorem{lemma} {Lemma} [section]
\newtheorem{proposition} [lemma] {Proposition}

\newtheorem{theorem} [lemma] {Theorem}
\newtheorem{corollary} [lemma] {Corollary}
\newtheorem{definition}[lemma] {Definition}
\newtheorem{conjecture}[lemma] {Conjecture}

\theorembodyfont{\rm} 

\newtheorem{example}[lemma] {Example}
\newtheorem{remark}[lemma]{Remark}

\newenvironment{proof}{{\sc Proof:}}{
\hfill $\square$}
\newenvironment{proof2}{{\sc Proof:}}{}



\numberwithin{equation}{section}


\theorembodyfont{\it}

\newtheorem{theoremx}{Theorem}
\newtheorem{propositionx}[theoremx]{Proposition}
\newtheorem{conjx}[theoremx]{Conjecture}




\usepackage{array}
\usepackage[colorlinks]{hyperref}


\begin{document} 

\maketitle

\begin{abstract}
Given a torsion-free $p$-adic analytic pro-$p$ group $G$ with $\mr{dim}(G) < p$, 
we show that the self-similar actions of $G$ on regular rooted trees
can be studied through the virtual endomorphisms of the associated  
$\bb{Z}_p$-Lie lattice. 
We explicitly classify 3-dimensional unsolvable $\bb{Z}_p$-Lie 
lattices for $p$ odd, and study their virtual endomorphisms. 
Together with Lazard's correspondence, this
allows us to classify  3-dimensional 
unsolvable torsion-free $p$-adic analytic pro-$p$ groups for $p\ges 5$, 
and to determine which of them admit a 
faithful self-similar action on a $p$-ary tree.  
In particular, we show that no open subgroup of $SL_1^1(\Delta_p)$ admits 
such an action. On the other hand, 
we prove that all the open subgroups of $SL_2^{\sylow}(\bb{Z}_p)$
admit faithful self-similar actions on regular rooted trees. 
\end{abstract}

\let\thefootnote\relax\footnotetext{\textit{Mathematics Subject Classification (2010): }
Primary 20E18, 22E20, 20E08; Secondary 22E60.}
\let\thefootnote\relax\footnotetext{\textit{Key words:} self-similar group, 
$p$-adic analytic group, pro-$p$ group, virtual endomorphism, $p$-adic Lie lattice.}


\section*{Introduction}

The class of groups that admit a faithful self-similar action on some regular 
rooted $d$-ary 
tree $T_d$ 
contains many interesting and important examples such as the Grigorchuk 
2-group \cite{Gri80}, the Gupta-Sidki $p$-groups \cite{GuSi83}, the affine groups 
$\mathbb{Z}^n \rtimes GL_n(\mathbb{Z})$ 
\cite{BrSi98},
groups obtained as iterated monodromy groups of self-coverings 
of the Riemann sphere by post-critically finite rational maps \cite{NekSSgrp},
and so on.
Recently there has been an intensive study on the self-similar actions of other 
important families of groups including abelian groups \cite{BrSi10}, finitely 
generated nilpotent groups \cite{BeSi07},  arithmetic groups \cite{Ka12} and 
groups of type $\mr{FP}_n$ \cite{KoSi}. 
Despite such an active research on the topic of self-similarity in recent years, 
very little is known about the self-similar actions of pro-$p$ groups. 
In this context one may ask the following question.

\medskip

\noindent \textbf{Question.} Which pro-$p$ groups admit a faithful 
self-similar action on a regular rooted $p$-ary  tree?

\smallskip

The pro-$2$ completion of the Grigorchuk group coincides with its 
topological closure as a subgroup of $\textrm{Aut}(T_2)$, so it  
is an example of a pro-$2$ group that admits a self-similar action 
on a binary tree. For similar reasons, the pro-$p$ completions of 
the Gupta-Sidki $p$-groups admit self-similar actions on a $p$-ary tree.   
Self-similar actions of some classes of finite $p$-groups where studied 
in \cite{Su11} and \cite{BaFaFeVa}.  

The main goal of the present paper is to study the self-similar actions of 
torsion-free $p$-adic analytic pro-$p$ groups, and we do so using Lie methods. 
It is not difficult to see that every non-trivial torsion-free $p$-adic analytic
pro-$p$ group of dimension at most 2 
admits a faithful self-similar action on a $p$-ary tree 
(see Proposition \ref{pdim2}), but in dimension 3 or higher the problem becomes
considerably more complex.

We say that a group $G$ is \textbf{self-similar of index} $\bm{d}$ 
if $G$ admits a faithful self-similar action on
$T_d$ that is transitive on the first level; moreover,
we say that $G$ is \textbf{self-similar} if it is self-similar of 
some index $d$.  A \textbf{virtual endomorphism} of $G$
is a group homomorphism $\varphi:D\rar G$ where $D\les G$
is a subgroup of \textit{finite} index; the \textbf{index} of $\varphi$
is defined to be the index of $D$ in $G$ (cf. \cite{NekVirEnd}).
A virtual endomorphism  is said to be \textbf{simple}
if there are no non-trivial normal subgroups of $G$ that are $\varphi$-invariant
(cf. Definition \ref{dinvsubs2}).
It is known that a group $G$ is self-similar of index $d$
if and only if it admits a simple virtual endomorphism of index 
$d$ (see Proposition \ref{pssiffve}). 
We will use the language of  virtual endomorphisms in the context of Lie lattices. Given an $n$-dimensional \textbf{Lie lattice} $L$ over $\bb{Z}_p$, 
a \textbf{virtual endomorphism}  of $L$ is a
Lie algebra homomorphism $\varphi:M\rar L$ where 
$M\subseteq L$ is a subalgebra 
of dimension $n$; the \textbf{index} of $\varphi$ is defined to be the index of $M$ in $L$. 
A virtual endomorphism $\varphi:M\ra L$ is called \textbf{simple}
if there are no non-trivial ideals of $L$ that are $\varphi$-invariant. 
Moreover, we say that $L$ is 
\textbf{self-similar of index} $\bm{d}$  if  $L$ 
admits a simple virtual endomorphism of index $d$.
Using Lazard's correspondence  \cite{Laz65}, which is
an isomorphism between the category of 
saturable pro-$p$ groups and the category of saturable Lie lattices over 
$\bb{Z}_p$, and results from \cite{GSKpsdimJGT},
we prove the following proposition.

\begin{propositionx} \label{pgsstlss2} 
Let $p$ be a prime and $k\in\bb{N}$.
Let $G$ be a saturable  $p$-adic analytic pro-$p$ group of dimension 
$\mr{dim}(G)\les p$, and $L_G$ be the $\bb{Z}_p$-Lie lattice associated with $G$. Then
$G$ is a self-similar group of index $p^k$
if and only if $L_G$ is a self-similar Lie lattice of index $p^k$.
\end{propositionx}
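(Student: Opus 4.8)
The plan is to establish a dictionary between simple virtual endomorphisms of the group $G$ and simple virtual endomorphisms of its associated Lie lattice $L_G$, preserving index, and then invoke the stated characterization that a (group or Lie lattice) is self-similar of index $d$ if and only if it admits a simple virtual endomorphism of index $d$. The backbone of the argument is Lazard's correspondence, which I would use not merely as an isomorphism between $G$ and $L_G$ as objects, but functorially: it should set up a bijection between the open (finite-index) subgroups $D \les G$ and the finite-index subalgebras $M \subseteq L_G$ that is compatible with the passage to associated Lie lattices, and under which indices match, i.e. $[G:D] = [L_G:M]$.

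The key steps, in order, would be as follows. First I would verify that under the hypothesis $\mr{dim}(G) \les p$ (and saturability), every open subgroup $D$ of $G$ is again saturable of the same dimension, so that Lazard's correspondence applies to $D$ and produces an associated Lie lattice $L_D$ which is canonically a finite-index subalgebra of $L_G$; here I expect to lean on the results from \cite{GSKpsdimJGT} cited in the excerpt, which presumably guarantee that the relevant subgroups stay within the saturable range and that $[G:D] = [L_G:L_D]$ as $p$-powers. Second, given a virtual endomorphism $\varphi : D \ra G$ of the group, functoriality of Lazard's correspondence yields a Lie lattice morphism $\varphi_* : L_D \ra L_G$, and conversely every Lie morphism $M \ra L_G$ exponentiates to a group morphism on the corresponding open subgroup; this gives the index-preserving bijection between group virtual endomorphisms and Lie virtual endomorphisms. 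Third, I would check that this bijection carries simple endomorphisms to simple endomorphisms: the point is that $\varphi$-invariant normal subgroups of $G$ correspond under Lazard to $\varphi_*$-invariant ideals of $L_G$, and non-triviality is preserved, so $\varphi$ is simple exactly when $\varphi_*$ is simple. Combining these, $G$ admits a simple virtual endomorphism of index $p^k$ iff $L_G$ does, which by the two characterizations is exactly the claimed equivalence.

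The main obstacle I anticipate is the correspondence between $\varphi$-invariant normal subgroups and $\varphi_*$-invariant ideals, and in particular the bookkeeping needed to make this work for \emph{all} finite-index subgroups simultaneously rather than for a single fixed saturable subgroup. Lazard's correspondence as usually stated is a bijection between closed subgroups and subalgebras \emph{within} a single saturable group, but a virtual endomorphism has its domain $D$ a proper subgroup and its image landing in $G$, so one must be careful that the Lie lattice $L_D$ is literally realized as a sublattice of $L_G$ (same underlying additive structure, compatible bracket) and that a group morphism $D \ra G$ is detected by its effect on $L_D \ra L_G$. The subtle direction is recovering a simple virtual endomorphism of $G$ from one of $L_G$: one needs that an arbitrary finite-index subalgebra $M \subseteq L_G$ is of the form $L_D$ for a genuine open subgroup $D \les G$, i.e.\ that $M$ is itself saturable and exponentiates inside $G$. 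This is exactly where the dimension bound $\mr{dim}(G) \les p$ is essential, since it keeps every relevant sublattice saturable, and I would expect the cited work \cite{GSKpsdimJGT} to supply precisely this closure property; verifying it carefully is the crux of the proof.
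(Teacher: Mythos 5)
Your proposal is correct and follows essentially the same route as the paper: the index-preserving dictionary you describe (open subgroups of $G$ corresponding to finite-index subalgebras of $L_G$, group morphisms to Lie morphisms, simplicity to simplicity, with the bound $\mr{dim}(G)\les p$ entering exactly where you predict, namely to guarantee via \cite[Theorem E]{GSKpsdimJGT} that an arbitrary finite-index subalgebra of $L_G$ exponentiates to a genuine open subgroup of $G$) is precisely the content of Theorem~\ref{tGSK2}, after which the proposition follows by applying Proposition~\ref{pssiffve} on both sides. The only point you leave implicit is that simplicity for $G$ quantifies over \emph{all} normal subgroups while Lazard's correspondence only matches \emph{closed} normal subgroups with ideals; the paper bridges this by observing that $\varphi$ is automatically continuous (Serre's theorem, \cite[Corollary 1.21]{DixAnaProP}), so the closure of a non-trivial $\varphi$-invariant normal subgroup is again a non-trivial $\varphi$-invariant closed normal subgroup.
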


\vspace{0mm}

In \cite{GSKpsdimJGT}, Gonz\'alez-S\'anchez and Klopsch proved that
any torsion-free $p$-adic analytic pro-$p$ group $G$ of dimension
$\mr{dim}(G)<p$ is saturable, and that
such groups correspond bijectively to residually nilpotent $\bb{Z}_p$-Lie lattices.
Using this fact and the above proposition,
it is possible to classify such groups and give their self-similarity 
properties, provided that one is able to do so for the corresponding 
Lie lattices,
an easier task due to their linear nature.
With this in mind, we prove a sequence of theorems on $\bb{Z}_p$-Lie lattices 
for $p$ odd. First of all,
we give an explicit classification up 
to isomorphism of
3-dimensional unsolvable Lie lattices $L$ over $\bb{Z}_p$
({Theorem \ref{tsoluns}}),
complementing 
the classification of 3-dimensional solvable Lie lattices
provided in \cite{GSKpsdimJGT}. The same theorem determines exactly
which unsolvable Lie lattices are self-similar of index $p$. 
Next, we   
establish that, if $L\otimes_{\bb{Z}_p}\bb{Q}_p\simeq sl_2(\bb{Q}_p)$
then $L$ is self-similar of some index $p^k$, and we give estimates
for the {self-similarity index} of $L$
({Theorem \ref{tssindeta0}}); moreover, 
we prove the self-similarity
of some notable subalgebras of $sl_2(\bb{Z}_p)$ ({Theorem \ref{tsl2}}).
On the other hand, let $\bb{D}_p$
be a central simple $\bb{Q}_p$-division algebra of 
index 2, and let $sl_1(\bb{D}_p)$ 
be the set of elements of reduced trace zero in $\bb{D}_p$. We establish
that, if $L\otimes_{\bb{Z}_p}\bb{Q}_p\simeq sl_1(\bb{D}_p)$
then $L$ is not self-similar of index $p$ ({Theorem \ref{tsl1tnss}}).
We believe that such $L$'s are not self-similar of any index
(Conjecture \ref{conjsl1alg}).
Observe that $sl_2(\bb{Q}_p)$ and $sl_1(\bb{D}_p)$
represent the two isomorphism classes of 3-dimensional unsolvable 
$\bb{Q}_p$-Lie algebras (for $p$ odd).

Combining Proposition \ref{pgsstlss2} with our results on Lie lattices
we get the main result of the paper.

\begin{theoremx} \label{tmain3}
Let $p\ges 5$ be a prime, and fix $\rho\in\bb{Z}_p^*$ not a square modulo $p$. 
The following is a complete and irredundant
list of 3-dimensional unsolvable torsion-free $p$-adic analytic pro-$p$ groups, 
up to isomorphism: 
\begin{enumerate}
\item The pro-$p$ group 
$G_1(s_0, s_1, s_2, \vep_1, \vep_2)$, for  $0 \les s_0 < s_1 < s_2$ 
and $\vep_1,\vep_2\in\{0,1\}$,  associated with the
$\bb{Z}_p$-Lie lattice presented by:
$$ \langle\, x_0, x_1, x_2   \mid [x_1, x_2] =  p^{s_0}x_0,\,  [x_2, x_0] =
\rho^{\vep_1}p^{s_1}x_1,\, [x_0, x_1] =  \rho^{\vep_2}p^{s_2}x_2 \,\rangle .$$
\item  The pro-$p$ group $G_2(s_0, s_2, \vep_1)$, for  $1 \les s_0  < s_2$ 
and $\vep_1 \in\{0,1\}$,  associated with the $\bb{Z}_p$-Lie lattice 
presented by:
$$ \langle\, x_0, x_1, x_2   \mid [x_1, x_2] =  p^{s_0}x_0,\,  [x_2, x_0] =
-\rho^{\vep_1}p^{s_0}x_1,\, [x_0, x_1] =  p^{s_2}x_2\,\rangle .$$
\item  The pro-$p$ group $G_3(s_0, s_1, \vep_2)$, for  $0 \les s_0 < s_1 $
and $\vep_2\in\{0,1\}$,  associated with the $\bb{Z}_p$-Lie lattice 
presented by:
$$ \langle\, x_0, x_1, x_2   \mid [x_1, x_2] =  p^{s_0}x_0,\,  [x_2, x_0] =
p^{s_1}x_1,\, [x_0, x_1] = - \rho^{\vep_2}p^{s_1}x_2\,\rangle.$$
\item  The pro-$p$ group $G_4(s_0)$, for  $1 \les s_0 $,  associated with the 
$\bb{Z}_p$-Lie lattice
presented by:
$$ \langle\, x_0, x_1, x_2   \mid [x_1, x_2] =  
p^{s_0}x_0, \, [x_2, x_0] =  p^{s_0}x_1, \,[x_0, x_1] =  p^{s_0}x_2\,\rangle .$$
\end{enumerate}
Moreover, we have:
\begin{enumerate}
\item  $G_1(s_0, s_1, s_2, \vep_1, \vep_2)$ is not self-similar of index $p$.
\item $G_2(s_0, s_2, \vep_1)$ is self-similar of index $p$ if and only if
$\vep_1 = 0$.
\item $G_3(s_0, s_1, \vep_2)$ is self-similar of index $p$ if and only if
$\vep_2 = 0$.
\item $G_4(s_0)$ is self-similar of index $p$.
\end{enumerate}
\end{theoremx}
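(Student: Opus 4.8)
The plan is to reduce everything to the corresponding statements about $\bb{Z}_p$-Lie lattices, which I can then read off from the Lie-theoretic theorems announced earlier in the introduction. Concretely, since $p\ges 5$ and each group in the list is 3-dimensional and torsion-free with $\mr{dim}(G)=3<p$, Proposition \ref{pgsstlss2} applies verbatim: $G$ is self-similar of index $p^k$ if and only if its associated $\bb{Z}_p$-Lie lattice $L_G$ is self-similar of index $p^k$. Thus the entire theorem splits into two tasks: (i) establishing that the four families of groups, matched with the four families of presented Lie lattices, form a complete and irredundant classification; and (ii) transporting the self-similarity-of-index-$p$ dichotomy from the Lie lattices to the groups.

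For the classification part I would argue as follows. First I would invoke the result of Gonz\'alez-S\'anchez and Klopsch that, for $p\ges 5$, torsion-free $p$-adic analytic pro-$p$ groups of dimension $3$ are saturable and correspond bijectively, via Lazard's correspondence, to residually nilpotent $\bb{Z}_p$-Lie lattices of dimension $3$. Under this correspondence, $G$ is unsolvable precisely when $L_G$ is unsolvable, so the four group families must correspond exactly to the four Lie-lattice families, which are the ones exhibited in Theorem \ref{tsoluns}. Hence completeness and irredundancy of the group list is equivalent to completeness and irredundancy of the Lie-lattice list, and the latter is exactly the content of Theorem \ref{tsoluns}. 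I would be careful here to check that the parameter ranges ($0\les s_0<s_1<s_2$ in case 1, the conditions $1\les s_0$ in cases 2 and 4, etc.) are inherited unchanged, and that the isomorphism-of-groups relation matches the isomorphism-of-lattices relation under Lazard's functor, which it does because Lazard's correspondence is an equivalence of categories.

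For the self-similarity statements I would translate each clause through Proposition \ref{pgsstlss2}. Every lattice in the list satisfies $L\otimes_{\bb{Z}_p}\bb{Q}_p\simeq sl_2(\bb{Q}_p)$ or $\simeq sl_1(\bb{D}_p)$, according to the discriminant of the defining structure constants. The lattices whose defining form has the ``anisotropic'' signature realize $sl_1(\bb{D}_p)$, and by Theorem \ref{tsl1tnss} such $L$ is not self-similar of index $p$; this covers $G_1$ for all parameter values, and also the cases $\vep_1=1$ in $G_2$ and $\vep_2=1$ in $G_3$, yielding the ``only if'' directions of clauses (2) and (3) and all of clause (1). For the ``if'' directions and for clause (4), the relevant lattices have $L\otimes_{\bb{Z}_p}\bb{Q}_p\simeq sl_2(\bb{Q}_p)$, and I would exhibit (or cite from Theorem \ref{tssindeta0} and Theorem \ref{tsl2}) an explicit simple virtual endomorphism of index $p$ for the lattices underlying $G_2(s_0,s_2,0)$, $G_3(s_0,s_1,0)$, and $G_4(s_0)$. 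I expect the main obstacle to lie not in the group-to-Lie translation, which is formal once Proposition \ref{pgsstlss2} is in hand, but in matching the parameter conventions precisely: one must verify that the quadratic-residue invariant $\rho^{\vep_i}$ appearing in each presentation is exactly the invariant that distinguishes the $sl_2$ case from the $sl_1(\bb{D}_p)$ case, so that the index-$p$ self-similarity dichotomy falls on the correct values of $\vep_1,\vep_2$. Once this bookkeeping is confirmed, every clause follows directly from the cited Lie-lattice theorems.
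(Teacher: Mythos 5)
Your reduction of the classification half is essentially the paper's argument: for $p\ges 5$ the groups in question correspond, via the Gonz\'alez-S\'anchez--Klopsch form of Lazard's correspondence, to residually nilpotent $3$-dimensional $\bb{Z}_p$-Lie lattices, unsolvability passes across the correspondence, and the lattice classification is Theorem \ref{tsoluns}. One caveat: the parameter ranges are \emph{not} ``inherited unchanged'' from Theorem \ref{tsoluns}, which allows $0\les s_0$ in families (2) and (4), whereas the group list restricts to $1\les s_0$ there. The restriction is exactly the residual nilpotence criterion (item (1) of Proposition \ref{pinsresnil2}: the middle $s$-invariant must be positive), and identifying the residually nilpotent sublist of Theorem \ref{tsoluns} is the step your ``bookkeeping'' must actually carry out. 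The positive self-similarity clauses (families (2), (3) with $\vep_i=0$, and family (4)) are correctly handled by Proposition \ref{pgsstlss2} together with Theorem \ref{tsoluns} (or Theorem \ref{tssindeta0}).

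The genuine gap is your derivation of the negative clauses from Theorem \ref{tsl1tnss}. You assert that all lattices of family (1), and those of families (2), (3) with $\vep_i=1$, satisfy $L\otimes_{\bb{Z}_p}\bb{Q}_p\simeq sl_1(\bb{D}_p)$. That is false: whether $\eta(A)=1$ depends on the parities of the $s_i$ as well as on the $\vep_i$ (Lemma \ref{letainvcom}). For instance, $L_1(0,2,4,0,0)$ has $\eta\equiv \delta_p(0+2+4+0+0+8)\equiv 0$ (mod 2), so it tensors to $sl_2(\bb{Q}_p)$; similarly $L_2(s_0,s_2,1)$ with $s_0\equiv s_2$ (mod 2) has $\eta\equiv s_0+s_2\equiv 0$. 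These are precisely the lattices in lines 3 and 5--9 of Table \ref{table1}: they are of $sl_2$-type and self-similar of \emph{some} index, but not of index $p$. So the $sl_2$-versus-$sl_1(\bb{D}_p)$ dichotomy does not coincide with the index-$p$ self-similarity dichotomy, and Theorem \ref{tsl1tnss} covers only part of the cases you need; the cases with $\eta=0$ listed above would remain unproven in your argument. The paper instead obtains all four self-similarity clauses from Theorem \ref{tsoluns}, whose negative part rests on the valuation-theoretic non-self-similarity condition (Theorem \ref{tsl11noss} together with Remark \ref{rcannssc}), a criterion insensitive to the rational isomorphism type of $L$. Your proof becomes correct if you cite Theorem \ref{tsoluns} for all the self-similarity statements and then transport them to the groups by Proposition \ref{pgsstlss2}.
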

\vspace{3mm}

Let $p$ be an odd prime. Let ${\Delta}_p$ denote the (unique) maximal $\bb{Z}_p$-order in  $\bb{D}_p$ and write $\mathfrak{P}$ for the maximal ideal of ${\Delta}_p$.  Let $SL_1(\bb{D}_p)$ be the set of elements of reduced norm 1 
in $\bb{D}_p$ and, for  $m \ges 1$, let 
$SL_1^m({\Delta}_p): = SL_1(\bb{D}_p) \cap (1+ {\mathfrak{P}}^m)$. Any 3-dimensional unsolvable torsion-free $p$-adic analytic pro-$p$ group
is isomorphic to an open subgroup of exactly one of the groups $SL_1^1({\Delta}_p)$ and
$SL_2^{\sylow}(\bb{Z}_p)$, where $SL_2^{\sylow}(\bb{Z}_p)$ is a Sylow pro-$p$ subgroup
of $SL_2(\bb{Z}_p)$ (see \cite[Section 7.3]{GSKpsdimJGT}). 
The following two theorems present a dichotomy,
with respect to self-similarity,
among the open subgroups of $SL_2^{\sylow}(\bb{Z}_p)$ and of
$SL_1^1({\Delta}_p)$; moreover,
we believe that the ensuing conjecture is true.
The \textbf{self-similarity index} $\sigma(G)$  of a self-similar pro-$p$ group $G$ is defined to be the least power of $p$, say $p^k$,  such that $G$ is self-similar of index $p^k$. 

\begin{theoremx} \label{tmainsl2}
Let $p$ be a prime. 
\begin{enumerate}
\item 
 \begin{enumerate}
 \item For $p\ges 5$, any open subgroup of $SL_2^{\sylow}(\bb{Z}_p)$
 is self-similar. 
 \item For $p\ges 3$, any open subgroup of 
 $SL_2^1(\bb{Z}_p)$ is self-similar.
 \end{enumerate}
The self-similarity index of each of these groups $G$ is estimated in 
Table \ref{table1} (page \pageref{table1}) 
through the $\bb{Z}_p$-Lie lattice $L_G$ of $G$.
\item 
 \begin{enumerate}
 \item For $p\ges 5$, the group
 $SL_2^{\sylow}(\bb{Z}_p)$ and the terms $\gamma_k(SL_2^{\sylow}(\bb{Z}_p))$,
 $k\ges 1$,
 of its lower central series
 are self-similar of index $p$. 
 \item For $p\ges 3$, the congruence subgroups
 $SL^k_2(\bb{Z}_p)$ of $SL_2(\bb{Z}_p)$, $k\ges 1$, are self-similar of index $p$.
 \end{enumerate}
\item For $p\ges 5$, if $N$ is a non-trivial closed normal subgroup of 
$SL_2^{\sylow}(\bb{Z}_p)$ 
then $N$ is open in $SL_2^{\sylow}(\bb{Z}_p)$ 
and it is self-similar of index $p$ or $p^2$.
\end{enumerate}
\end{theoremx}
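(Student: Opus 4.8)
The plan is to separate the statement into its two assertions—that $N$ is open, and that it is self-similar of index $p$ or $p^2$—and to transport both to the Lie-lattice side, where the simplicity of $sl_2(\bb{Q}_p)$ does the essential work. Throughout, write $G=SL_2^{\sylow}(\bb{Z}_p)$ and let $L=L_G$ be its associated $\bb{Z}_p$-Lie lattice, so that $L\otimes_{\bb{Z}_p}\bb{Q}_p\simeq sl_2(\bb{Q}_p)$.

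First I would establish openness. For $p\ges 5$ the group $G$ is torsion-free of dimension $3<p$, hence saturable, and its $\bb{Q}_p$-Lie algebra is the simple algebra $sl_2(\bb{Q}_p)$. If $N\trianglelefteq G$ is closed and non-trivial, then, $G$ being torsion-free, $N$ is infinite, so $\mr{dim}(N)\ges 1$ and $\mr{Lie}(N)$ is a non-zero ideal of $sl_2(\bb{Q}_p)$; by simplicity $\mr{Lie}(N)=sl_2(\bb{Q}_p)$, whence $\mr{dim}(N)=3=\mr{dim}(G)$ and $N$ is open. Equivalently, via Lazard's correspondence $N$ corresponds to an ideal $I$ of $L$, and $I\otimes_{\bb{Z}_p}\bb{Q}_p$ is a non-zero ideal of $sl_2(\bb{Q}_p)$, forcing $I$ to have full rank $3$ and therefore finite index in $L$.

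Having $N$ open, I would pass to the finite-index ideal $L_N\subseteq L$ and classify such ideals up to isomorphism. Writing $L$ in the normal form of Theorem \ref{tsoluns} (it is of $sl_2$-type with structure constants of valuation $s_0=0<s_1=1$), one checks that $L/pL$ is the Heisenberg algebra over $\bb{F}_p$; its ideals are $0$, the centre, the $p+1$ planes through the centre, and the whole space. Since every non-zero ideal $I$ of $L$ contains $p^mL$ for some $m$, and the ideal poset is invariant under the scaling $I\mapsto pI$ (a $\bb{Z}_p$-Lie-lattice isomorphism), this local computation reduces the classification to finitely many isomorphism types: up to rescaling, $L$ itself, the index-$p$ ideals (the preimages of the planes), and the index-$p^2$ ideal $[L,L]=\gamma_2(L)$.

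Finally I would read off the self-similarity index of each type from the Lie-lattice theorems and translate back through Proposition \ref{pgsstlss2}. Every $L_N$ satisfies $L_N\otimes_{\bb{Z}_p}\bb{Q}_p\simeq sl_2(\bb{Q}_p)$, so by Theorem \ref{tssindeta0} it is self-similar of some finite index $p^k$, and it remains to verify $k\les 2$ for the types on the list. Here the index-$p$ ideals are the decisive case: each is isomorphic to some member $G_j(\ldots,\vep)$ of $sl_2$-type, those with $\vep=0$ being self-similar of index $p$ by the index-$p$ criterion of Theorem \ref{tsoluns} (see also Theorem \ref{tsl2}), while the remaining ones fail that criterion but meet the upper estimate of Theorem \ref{tssindeta0}, giving index $p^2$; the types $L$ and $\gamma_2(L)$ are index $p$ as in parts (1) and (2) of the present theorem. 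I expect the main obstacle to be exactly this last bookkeeping—determining precisely which $\vep$ and which valuations $s_i$ each finite-index ideal of $L$ realizes, and confirming that the ideals failing the index-$p$ criterion are nonetheless self-similar of index $p^2$ and never of larger index.
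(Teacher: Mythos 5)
Your opening moves (Lazard transfer, openness via simplicity of $sl_2(\bb{Q}_p)$ --- equivalent to the paper's hereditarily-just-infinite argument, Proposition \ref{p3dimins}) are sound, but the core of your proof of part (3) has genuine gaps. First, the classification of ideals is asserted, not proved: knowing the ideals of the Heisenberg algebra $L/pL$ and that every non-zero ideal contains some $p^mL$ does not yet show that an ideal $I\not\subseteq pL$ equals the \emph{full} preimage of its image modulo $p$. Nakayama disposes of the case where the image is everything, but when the image is a plane or the centre you must rule out proper sub-ideals with the same image, and this requires explicit bracket computations; it is precisely the content of Proposition \ref{pinsresnil2}(2) in the paper, whose proof is a page of commutator manipulation. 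Second, the parenthetical claim that $I\mapsto pI$ is ``a $\bb{Z}_p$-Lie-lattice isomorphism'' is false: rescaling multiplies the structure matrix by $p$ (Equation (\ref{ebchbasis})), so for instance $pL$ has $s$-invariants $1,2,2$ while $L=sl_2^{\sylow}(\bb{Z}_p)$ has $0,1,1$, and the terms of the lower central series are pairwise non-isomorphic. Hence the ideals fall into \emph{infinitely} many isomorphism types, and your reduction ``to finitely many isomorphism types up to rescaling'' does not exist as stated (the scaled families do behave uniformly, but that is an extra computation, not a consequence of rescaling). Third, the decisive bookkeeping, which you explicitly defer, does not come out the way you guess: the dichotomy among the $p+1$ index-$p$ ideals is not governed by an $\vep$. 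One computes that exactly two of them, $\langle z_0, z_1\pm z_2, pz_1, pz_2\rangle$, are copies of $sl_2^1(\bb{Z}_p)$, i.e.\ family (4) of Theorem \ref{tsoluns} with $s_0=1$, hence self-similar of index $p$; the remaining $p-1$ have $s$-invariants $0<1<2$, lie in family (1), are not self-similar of index $p$ for \emph{any} choice of $\vep$'s, and have self-similarity index exactly $p^2$ (line 9 of Table \ref{table1}). Finally, parts (1) and (2) of the theorem are invoked as inputs but never proved; in the paper they follow from Proposition \ref{pgsstlss2} together with Theorems \ref{tssindeta0} and \ref{tsl2}, using the Lie lattices identified in Remark \ref{rsl2sylow} and Example \ref{ecansl}.

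It is worth contrasting this with the paper's own proof of part (3), which sidesteps every one of these difficulties by never identifying the isomorphism type of an ideal. On the Lie side (Theorem \ref{tsl2}(2b)), Proposition \ref{pinsresnil2}(2) is used only to sandwich a non-zero ideal between consecutive terms of the lower central series, $\gamma_k(L)\subseteq I\subsetneq\gamma_{k-1}(L)$; since $[\gamma_{k-1}(L):\gamma_k(L)]\les p^2$ (Lemma \ref{lgenGnL}), the subalgebra $\gamma_k(L)$ has index $1$ or $p$ in $I$, and since $\gamma_k(L)$ is self-similar of index $p$, Lemma \ref{lsubss} immediately yields that $I$ is self-similar of index $p$ or $p^2$; Theorem \ref{tGSK2} and Proposition \ref{pgsstlss2} then transfer this to $N$. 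Your finer route, if completed, would prove strictly more --- the exact self-similarity index of every non-trivial closed normal subgroup --- but as written it is a plan whose two hardest steps (the ideal classification and the canonical-form computations for all scaled families) are missing, and one of its reductions rests on an incorrect isomorphism claim.
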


We also prove that if $p\ges 3$ and $G$ is a compact $p$-adic analytic group 
whose associated
$\bb{Q}_p$-Lie algebra is isomorphic to $sl_2(\bb{Q}_p)$ then $G$ is self-similar
(Corollary \ref{ccomppadic}).

\begin{theoremx} \label{tmainsl1}
Let $p$ be a prime.
\begin{enumerate}
\item If $p\ges 5$ then no open 
subgroup of $SL_1^1(\Delta_p)$ is self-similar of index $p$.
\item If $p\ges 3$ then no open 
subgroup of $SL_1^2(\Delta_p)$ is self-similar of index $p$.
\end{enumerate}
\end{theoremx}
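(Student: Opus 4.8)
The plan is to reduce both statements to the non-self-similarity of the corresponding Lie lattices, which is already recorded in Theorem \ref{tsl1tnss}, by passing through the bridge Proposition \ref{pgsstlss2}. Let $G$ be an open subgroup of $SL_1^1(\Delta_p)$ (for part (1)) or of $SL_1^2(\Delta_p)$ (for part (2)). In either case $G$ is a torsion-free $p$-adic analytic pro-$p$ group with $\mr{dim}(G)=3$. Since the associated $\bb{Q}_p$-Lie algebra is an invariant of the commensurability class, and the $\bb{Q}_p$-Lie algebra of $SL_1(\Delta_p)$ (hence of its Sylow pro-$p$ subgroup $SL_1^1(\Delta_p)$) is $sl_1(\bb{D}_p)$, the $\bb{Z}_p$-Lie lattice $L_G$ attached to $G$ by Lazard's correspondence is a full lattice in $sl_1(\bb{D}_p)$; that is, $L_G\otimes_{\bb{Z}_p}\bb{Q}_p\simeq sl_1(\bb{D}_p)$.

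Next I would verify the hypotheses of Proposition \ref{pgsstlss2}, namely that $G$ is saturable and $\mr{dim}(G)\les p$. For part (1) with $p\ges 5$ this is immediate: here $\mr{dim}(G)=3<p$, so $G$ is a torsion-free $p$-adic analytic pro-$p$ group of dimension less than $p$ and is therefore saturable by the theorem of Gonz\'alez-S\'anchez and Klopsch; the same reasoning covers part (2) whenever $p\ges 5$. The reason for restricting to the \emph{second} congruence subgroup in part (2) is precisely to accommodate the boundary prime $p=3$, where $\mr{dim}(G)=3=p$ and the dimension criterion no longer forces saturability. In that case one must establish saturability of the relevant open subgroups directly from the structure of $SL_1^2(\Delta_3)$, which is the deepest congruence level at which the filtration becomes saturable for $p=3$; with this in place one has $\mr{dim}(G)=3\les 3=p$ as required.

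With saturability in hand the conclusion is short. By Proposition \ref{pgsstlss2} applied with $k=1$, the group $G$ is self-similar of index $p$ if and only if $L_G$ is self-similar of index $p$. But $L_G\otimes_{\bb{Z}_p}\bb{Q}_p\simeq sl_1(\bb{D}_p)$, so Theorem \ref{tsl1tnss} guarantees that $L_G$ is \emph{not} self-similar of index $p$. Hence $G$ is not self-similar of index $p$, and since $G$ was an arbitrary open subgroup, this proves both parts.

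The main obstacle is the saturability check at $p=3$ in part (2). For $p\ges 5$ everything is controlled by the strict inequality $\mr{dim}(G)<p$, and one may simply invoke the general saturability result for torsion-free groups of small dimension. At $p=3$, however, the dimension equals $p$, that result is unavailable, and open subgroups of a saturable group need not themselves be saturable; one must therefore argue directly that the open subgroups of $SL_1^2(\Delta_3)$ are saturable. This is exactly why the two parts of the theorem carry different ranges of $p$ and refer to different congruence subgroups, and it is where the genuine technical content lies. Once saturability is secured, the remainder of the argument is a formal application of Proposition \ref{pgsstlss2} and Theorem \ref{tsl1tnss}.
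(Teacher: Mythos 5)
Your overall route is the same as the paper's: identify $L_G\otimes_{\bb{Z}_p}\bb{Q}_p\simeq sl_1(\bb{D}_p)$ for the Lie lattice $L_G$ of an open subgroup $G$, invoke Theorem \ref{tsl1tnss} to rule out index-$p$ self-similarity of $L_G$, and transfer back to $G$ via Proposition \ref{pgsstlss2}. Part (1), and part (2) in the range $p\ges 5$, are complete as you wrote them.

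For part (2) at $p=3$, however, there is a genuine gap: Proposition \ref{pgsstlss2} requires the open subgroup $G$ itself to be saturable, and you explicitly defer this step (``one must establish saturability of the relevant open subgroups directly from the structure of $SL_1^2(\Delta_3)$'') without ever carrying it out, so your argument at the boundary prime is incomplete exactly where you locate its ``genuine technical content.'' What you missed is that no ad hoc structural analysis of $SL_1^2(\Delta_3)$ is needed, because the paper already packages this: Theorem \ref{tGSK2}(1) states that every open subgroup of a saturable $p$-adic analytic pro-$p$ group of dimension $\les p$ is itself saturable, with Lie lattice a finite-index subalgebra of the ambient lattice; inside its proof, the case $\mr{dim}(H)=p$ -- precisely your $p=3$ worry -- is handled by citing \cite[Proposition D]{GSKpsdimJGT} rather than \cite[Theorem A]{GSKpsdimJGT}. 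The hypothesis on the ambient group holds because $SL_1^2(\Delta_p)$ is uniform, hence saturable, for all $p\ges 3$ (Remark \ref{rsl1bis}). So the missing step is a citation, not new mathematics; with it, both parts follow uniformly. As a side benefit, Theorem \ref{tGSK2}(1) gives $L_G\subseteq sl_1^2(\Delta_p)$ of finite index directly, which also replaces your somewhat informal appeal to the $\bb{Q}_p$-Lie algebra being a commensurability invariant. (A minor slip: $SL_1^2(\Delta_3)$ is the \emph{first}, not the ``deepest,'' congruence level that is saturable, since $SL_1^1(\Delta_3)$ is torsion while $SL_1^k(\Delta_p)$ is uniform for all $k\ges 2$.)
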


\begin{conjx} \label{csl1}
For all primes $p\ges 5$, 
no open subgroup of $SL_1^1(\Delta_p)$
is self-similar.
For all primes $p\ges 3$, 
no open subgroup of $SL_1^2(\Delta_p)$
is self-similar.
\end{conjx}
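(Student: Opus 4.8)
The plan is to prove the stronger Lie-algebra statement (Conjecture \ref{conjsl1alg})---that no $\bb{Z}_p$-Lie lattice $L$ with $L\otimes_{\bb{Z}_p}\bb{Q}_p\simeq sl_1(\bb{D}_p)$ is self-similar of \emph{any} index $p^k$---and then transport it to the groups through Proposition \ref{pgsstlss2}, exactly as in the proof of Theorem \ref{tmainsl1}. So I fix such an $L$, set $\mfr{g}:=L\otimes_{\bb{Z}_p}\bb{Q}_p\simeq sl_1(\bb{D}_p)$, and let $\varphi\colon M\ra L$ be an arbitrary virtual endomorphism, with $M\subseteq L$ a finite-index subalgebra. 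Since $M\otimes_{\bb{Z}_p}\bb{Q}_p=\mfr{g}$, the map $\varphi$ extends to a $\bb{Q}_p$-linear Lie endomorphism $\varphi_{\bb{Q}}\colon\mfr{g}\ra\mfr{g}$. As $\mfr{g}$ is absolutely simple, $\ker\varphi_{\bb{Q}}$ is $0$ or $\mfr{g}$; in the latter case $\varphi=0$, every ideal of $L$ is $\varphi$-invariant, and $\varphi$ is not simple. Hence I may assume that $\varphi_{\bb{Q}}$ is an automorphism of $\mfr{g}$.

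The key input is the determination of $\mr{Aut}_{\bb{Q}_p}(\mfr{g})$. Because $\mfr{g}$ is of type $A_1$ there are no diagram automorphisms, so the automorphism group scheme of $\mfr{g}$ is the connected adjoint group, which for the anisotropic inner form $sl_1(\bb{D}_p)$ is $PGL_1(\bb{D}_p)$; equivalently, identifying $\mfr{g}$ with the pure quaternions under $[x,y]=xy-yx$, an orthogonal map preserves the bracket if and only if it has determinant $1$, so $\mr{Aut}_{\bb{Q}_p}(\mfr{g})=SO(q)\simeq PGL_1(\bb{D}_p)$ for the anisotropic ternary reduced-norm form $q$. By Hilbert 90 its $\bb{Q}_p$-points are $\bb{D}_p^*/\bb{Q}_p^*$, so every automorphism of $\mfr{g}$ is $\mr{Ad}(u)\colon x\mapsto uxu^{-1}$ for some $u\in\bb{D}_p^*$. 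Thus $\varphi=\mr{Ad}(u)|_M$ for a quaternion unit $u$. Since conjugation preserves the reduced norm, $\mr{Nrd}(uxu^{-1})=\mr{Nrd}(x)$, it preserves the valuation $w=\tfrac12 v_p\circ\mr{Nrd}$ on $\mfr{g}$, and therefore it fixes, setwise, every filtration piece $F_c:=\{x\in\mfr{g}:w(x)\ges c\}$.

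Now I build a universal invariant ideal. After rescaling $L$ by a power of $p$ (which does not affect self-similarity) I may assume $L\subseteq\cl{O}^0$, where $\cl{O}$ is the maximal order of $\bb{D}_p$ and $\cl{O}^0$ its trace-zero part. For every $c$ the lattice $I_c:=F_c\cap L$ is a Lie ideal of $L$, because $w([x,y])\ges w(x)+w(y)\ges c$ whenever $x\in L\subseteq F_0$ and $y\in F_c$; and for $c$ large enough $I_c$ is a nonzero proper sublattice of $L$, hence a non-trivial ideal. Finally $I_c$ is $\varphi$-invariant: using $\mr{Ad}(u)(F_c)=F_c$ and the hypothesis $\varphi(M)\subseteq L$,
\[
\varphi(I_c\cap M)=\mr{Ad}(u)(F_c\cap L\cap M)\subseteq \mr{Ad}(u)(F_c)\cap\varphi(M)\subseteq F_c\cap L=I_c .
\]
Hence every virtual endomorphism of $L$ admits a non-trivial $\varphi$-invariant ideal, so $L$ has no simple virtual endomorphism and is not self-similar of any index. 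Proposition \ref{pgsstlss2} then yields Conjecture \ref{csl1} for those open subgroups whose dimension is less than $p$, recovering and extending Theorem \ref{tmainsl1}.

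The conceptual heart---and the step I expect to be the main obstacle to making this fully rigorous---is the automorphism computation: one must be certain that over $\bb{Q}_p$ there are genuinely no automorphisms of $sl_1(\bb{D}_p)$ beyond the inner ones $\mr{Ad}(u)$, since it is precisely this rigidity that forces $\varphi$ to respect the valuation and thereby upgrades the index-$p$ result (Theorem \ref{tsl1tnss}) to arbitrary index. A second, more technical obstacle is the transport back to groups: Proposition \ref{pgsstlss2} requires saturability together with $\mr{dim}\les p$, so the boundary cases of Conjecture \ref{csl1}---notably $p=3$ with the deeper congruence subgroup $SL_1^2(\Delta_p)$---would need a separate verification that the relevant open subgroups are saturable, and one must ensure that the lattice-level conclusion is applied to every open subalgebra rather than to a single representative. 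I would also re-examine the integrality claim that $I_c$ is $\varphi$-invariant \emph{as a lattice}, although this appears to follow formally from $\varphi(M)\subseteq L$ combined with the valuation-invariance of $\mr{Ad}(u)$.
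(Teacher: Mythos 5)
This statement is \emph{not proved in the paper}: it is stated there as Conjecture~\ref{csl1}, and the paper only establishes the much weaker Theorem~\ref{tmainsl1} (non-self-similarity of index exactly $p$), by an entirely different route (the canonical-form classification of Theorem~\ref{tsoluns} plus the $\eta$-invariant computation of Theorem~\ref{tsl1tnss}), together with the reduction, via Corollary~\ref{cHssthenGss}, to the two groups $SL_1^1(\Delta_p)$, $p\ges 5$, and $SL_1^2(\Delta_3)$. So you are attacking an open problem, and your central idea is sound and genuinely goes beyond the paper: since type $A_1$ has no diagram automorphisms and $H^1(\bb{Q}_p,\bb{G}_m)=1$, every automorphism of $sl_1(\bb{D}_p)$ is $\mr{Ad}(u)$ with $u\in\bb{D}_p^*$, hence preserves the unique valuation $w$ of $\bb{D}_p$ and every filtration piece $F_c$; this rigidity has no analogue for $sl_2(\bb{Q}_p)$ (whose automorphism group $PGL_2(\bb{Q}_p)$ preserves no valuation), which is consistent with Theorem~\ref{tssindeta0}. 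Ironically, the step you flag as the main obstacle (the automorphism computation) is a standard fact; the genuine gaps are in the lattice bookkeeping.

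Two concrete gaps. \textbf{(i) The rescaling step.} Replacing $L$ by $p^kL$ changes the isomorphism class (the $s$-invariants all shift by $k$), and your claim that this ``does not affect self-similarity'' is unjustified in precisely the direction you need: Lemma~\ref{lsubss} gives ``$p^kL$ self-similar $\Rightarrow$ $L$ self-similar'', whereas you need ``$L$ self-similar $\Rightarrow$ $p^kL$ self-similar''. The naive transport fails because an ideal $J$ of $p^kL$ need not rescale to an ideal of $L$: one only gets $[L,J]\subseteq p^{-k}J$. The gap is repairable by showing the rescaling is unnecessary: \emph{every} full Lie sublattice $L$ of $sl_1(\bb{D}_p)$ automatically lies in $sl_1(\Delta_p)$. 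Indeed, for $x\in L$ the operator $\mr{ad}(x)$ preserves the lattice $L$, so its characteristic polynomial lies in $\bb{Z}_p[T]$ and its eigenvalues $0,\pm 2\lambda$ are integral over $\bb{Z}_p$; as $p$ is odd, the eigenvalues $\pm\lambda$ of $x$ are integral, so $\mr{Nrd}(x)=-\lambda^2\in\bb{Z}_p$ and $w(x)\ges 0$. \textbf{(ii) The invariance verification.} Definition~\ref{dselfsim} requires $I\subseteq M$ \emph{and} $\varphi(I)\subseteq I$; your display only proves $\varphi(I_c\cap M)\subseteq I_c$, and $I_c\cap M$ is not known to be an ideal of $L$ (it is cut out by the subalgebra $M$, which is not an ideal). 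This is also repairable: since $[L:M]<\infty$ there is $N$ with $p^NL\subseteq M$, and since $F_c\cap sl_1(\Delta_p)\subseteq p^{\lfloor c\rfloor}sl_1(\Delta_p)$ while $p^m sl_1(\Delta_p)\subseteq L$ for some $m$, one gets $F_c\cap L\subseteq p^NL\subseteq M$ for all $c\ges N+m+1$; for such $c$ the ideal $I_c$ is nonzero, contained in $M$, and $\varphi(I_c)\subseteq \mr{Ad}(u)(F_c)\cap L=I_c$.

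On the transport to groups: your restriction to ``dimension less than $p$'' covers only $p\ges 5$, but the conjecture's second half includes $p=3$ with $SL_1^2(\Delta_3)$, of dimension $3=p$. This acknowledged gap is closable with the paper's own machinery: $SL_1^2(\Delta_3)$ is uniform, hence saturable of dimension $\les p$, so Theorem~\ref{tGSK2} shows every open subgroup $H$ is saturable with $L_H$ a finite-index subalgebra of $sl_1^2(\Delta_3)$, and Proposition~\ref{pgsstlss2} applies to $H$; finally, since the index of a self-similar pro-$p$ group is necessarily a $p$-power, ruling out all indices $p^k$ rules out self-similarity. In summary: this is a promising and, I believe, essentially viable strategy for the conjecture, but as written it is not yet a proof; points (i) and (ii) must be repaired as indicated before the filtration argument is legitimate.
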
 

Observe that, in order to prove the conjecture, it suffices to prove
that $SL_1^1(\Delta_p)$ is not self-similar for $p \ges 5$
and that  $SL_1^2(\Delta_3)$ is not self-similar (cf. Corollary
\ref{cHssthenGss}).  Self-similar actions of 3-dimensional 
solvable torsion-free $p$-adic analytic pro-$p$ groups 
will be treated in an upcoming paper.

\vspace{3mm}

\noindent
\textbf{General notation.}
The set of natural numbers $\bb{N}=\{0,1,2,...\}$ is assumed to contain $0$.
If $R$ is a commutative ring ($\bb{Z}_p$ or $\bb{Q}_p$, in this paper)
we denote by $gl_n(R)$ the set of $n\times n$ matrices with coefficients in
$R$, and by $GL_n(R)$ the subset of matrices that are invertible over $R$. 
A square matrix with coefficients in $R$ is called
\textit{non-degenerate} if its determinant is not zero
(regardless whether the matrix is invertible over the given ring or not).
We write $\mr{diag}(a_1,...,a_n)$ for a diagonal matrix
with diagonal entries $a_1,...,a_n$. 
For the commutator of two elements $x,y$ of a group $G$,
we use the convention $[x,y] = x^{-1}y^{-1}xy$. 
For the lower central series we use the convention
$\gamma_0(G) = G$ and $\gamma_{k+1}(G) = [G,\gamma_k(G)]$.

\vspace{3mm}

\noindent
\textbf{Aknowledgements.}
The first author would like to thank Jos\'e Ram\'on-Mar\'i for suggesting
a useful reference.
The second author
thanks the Heinrich Heine University in D\"usseldorf for its hospitality.
The authors also thank the referee for useful comments.


\section{Self-similar groups}\label{sssgps}

Let $d\ges 1$ be an integer, and let $X:= \{0,...,d-1\}$, a finite set with
$d$ elements. 
We denote by $T_d$ the (regular) rooted tree associated
with the alphabet $X$. The vertices of
$T_d$ are the finite words in $X$; with a slight abuse of notation, we
write $v\in T_d$ when $v$ is such a word. There is an edge of $T_d$
exactly between vertices of type $v$ and $vx$,
for $v\in T_d$ and $x\in X$. The empty word is the root of $T_d$.
The automorphism group $\mr{Aut}(T_d)$ has a natural topology
that makes it profinite;  
the $n$-th level stabilizers of $T_d$ 
form a basis of open neighborhoods of the identity.
With any $g\in\mr{Aut}(T_d)$ and any $v\in T_d$
there is an associated restriction $g_{|v}\in\mr{Aut}(T_d)$.
For 
a left \textbf{action} $G\times T_d\rar T_d$ of a group $G$ on $T_d$ by rooted tree
isomorphisms, 
we use the notation $g\cdot v$ for the result of the action of $g\in G$ on 
$v\in T_d$.
An action is said to be
\textit{transitive on the first level} (respectively, \textit{level transitive}) 
if for all $x,y\in X$
(respectively, for all $v,w\in T_d$
of the same length)
there exists $g\in G$ such that $y= g\cdot x$ (respectively, $w= g\cdot v$).
An action is said to be \textbf{self-similar} if 
for all $g\in G$ and $x\in X$ there exists $h\in G$ such that for all 
$v\in T_d$ we have $ g\cdot xv = (g\cdot x)(h\cdot v)$,
where the right-hand side is a juxtaposition of words.
For a comprehensive study of self-similar groups the reader may consult,
for instance, \cite{NekSSgrp}.

Let $p$ be a prime number.
Observe that if $G$ is a self-similar pro-$p$ group of index $d$ then 
$d$ is a power of $p$ (Proposition \ref{pssiffve} and \cite[Lemma 1.18]{DixAnaProP}).
If $G$ is a nontrivial pro-$p$ group, it is natural to ask whether $G$
is self-similar of index $p$ (or just whether it is self-similar), and to
compute or estimate its self-similarity index.
It is known  that these questions 
can be tackled by looking at virtual 
endomorphisms (see Proposition \ref{pssiffve}).
We state the next definition and lemma (whose proof is left to the reader)
in the set theoretical setting, since these notions will be applied
to $\bb{Z}_p$-Lie lattices as well. 

\begin{definition}\label{dinvsubs2}
Let $B$ be a subset of a set $A$, and $\varphi:B\rar A$ be a function.
A subset $C\subseteq A$ is said to be 
$\varphi$\textbf{-invariant} if and only if
$C\subseteq B$ and $\varphi(C)\subseteq C$.  
We define the domain of the powers of $\varphi$
by $D_0:= A$, and by 
$D_{n+1}:=\{x\in B: \varphi(x)\in D_n\}$ for $n\ges 0$; 
we denote by $D_\infty$ the intersection of all of these domains. 
\end{definition}

\begin{lemma}\label{linvtdinf}
Let $\varphi:B\rar A$ be a function where $B\subseteq A$, let 
$C\subseteq A$, and let $D_\infty$ be the intersection of the domains 
of the powers of $\varphi$. If
$C$ is $\varphi$-invariant then $ C\subseteq D_\infty$.
\end{lemma}

\begin{proposition}\label{pssiffve} 
Let $G$ be a group and $d\ges 1$ be an integer.
Then $G$ is self-similar of index $d$ if and only if 
$G$ admits a simple virtual endomorphism of index $d$.
\end{proposition}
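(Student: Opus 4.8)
The plan is to establish a dictionary between self-similar actions of $G$ of index $d$ and simple virtual endomorphisms of index $d$, exhibiting explicit constructions in both directions and checking that they are mutually inverse in the sense relevant to the statement (i.e. each existence claim yields the other).

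For the forward direction, suppose $G$ acts faithfully and self-similarly on $T_d$, transitively on the first level. First I would set $D := \mr{Stab}_G(x_0)$, the stabilizer of the single letter $x_0 \in X$ viewed as a length-one word; by the orbit-stabilizer correspondence and first-level transitivity, $[G:D] = d$, so $D$ has the correct finite index. Then I would define $\varphi : D \rar G$ by sending $g \mapsto g_{|x_0}$, the restriction (the section) of $g$ at the fixed vertex $x_0$. The self-similarity axiom $g \cdot x_0 v = (g\cdot x_0)(g_{|x_0}\cdot v)$, specialized to $g \in D$ so that $g \cdot x_0 = x_0$, shows that $g$ acts on the subtree below $x_0$ exactly as $\varphi(g)$ acts on the whole tree; from this one reads off that $\varphi$ is a group homomorphism (the cocycle/composition rule for restrictions collapses to ordinary composition on the stabilizer), so $\varphi$ is a genuine virtual endomorphism of index $d$. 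The main content is then to verify simplicity: if $N \trianglelefteqslant G$ satisfies $N \subseteq D$ and $\varphi(N) \subseteq N$, I would show by induction on word length that every element of the normal core-type subgroup generated this way acts trivially on all of $T_d$, hence $N = 1$ by faithfulness. Concretely, $N \subseteq D$ means $N$ fixes $x_0$; normality plus first-level transitivity then forces $N$ to fix every first-level vertex; and $\varphi$-invariance propagates this down each level, so $N$ lies in the kernel of the action.

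For the converse, suppose $\varphi : D \rar G$ is a simple virtual endomorphism with $[G:D] = d$. I would reconstruct an action on $T_d$ from the standard self-similar-wreath recursion. Choose a transversal $\{t_0,\dots,t_{d-1}\}$ for the right cosets of $D$ in $G$, identify $X$ with $G/D$, and define the action recursively: for $g \in G$ and a letter corresponding to coset $t_i D$, the image letter is the coset $g t_i D$, and the section is $\varphi(t_{\pi_g(i)}^{-1} g\, t_i) \in G$, where $\pi_g$ is the permutation of cosets induced by left multiplication; this is well defined precisely because $t_{\pi_g(i)}^{-1} g\, t_i \in D$. Extending level by level gives a self-similar action transitive on the first level. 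The kernel $K$ of this action is the largest normal subgroup of $G$ contained in $D$ and stable under $\varphi$ (the so-called $\varphi$-core), so simplicity of $\varphi$ forces $K = 1$ and the action is faithful; hence $G$ is self-similar of index $d$.

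The main obstacle is the faithfulness/simplicity equivalence rather than the homomorphism bookkeeping: I expect the cleanest route is to identify, once and for all, the kernel of a self-similar action with the $\varphi$-core of the associated virtual endomorphism, namely the largest $\varphi$-invariant normal subgroup of $G$ inside $D$. Proving that the kernel equals this core requires the level-by-level induction described above, and care is needed to handle the choice of transversal (so that the construction is independent of it up to the relevant equivalence) and to confirm that first-level transitivity corresponds exactly to $G$ acting transitively on $G/D$ by left multiplication, which is automatic. Since this equivalence is classical (it is exactly the virtual-endomorphism formalism of \cite{NekVirEnd}), I would state the core identity as the key lemma and then deduce both implications from it, keeping the recursion explicit enough that the index-$d$ claim is manifest on both sides.
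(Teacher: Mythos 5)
Your proposal is correct, and in the forward direction it is identical to the paper's argument (stabilizer $D$ of a first-level vertex, $\varphi(g)=g_{|0}$); the divergence is in the converse and in the level of detail. Where the paper stays at the level of a cited sketch --- invoking the irreducible right-free $G$-bimodule $\varphi(G)G$ of \cite[Definition 2.5.7]{NekSSgrp} and then citing \cite[Section 2.3, Proposition 2.7.4]{NekSSgrp} for the induced action and its faithfulness --- you build the action by the explicit coset recursion $g\cdot t_iD:=gt_iD$ with section $\varphi\bigl(t_{\pi_g(i)}^{-1}g\,t_i\bigr)$, and you prove the key lemma (kernel of the action equals the $\varphi$-core) directly by induction on levels. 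These are the same construction in different clothing (a basis of the bimodule is exactly a transversal of $D$ in $G$), so this is a difference of formalism rather than of mathematical route; what your version buys is self-containedness, at the cost of having to check by hand that the recursion defines an action (the cocycle identity for sections, which follows from $\varphi$ being a homomorphism and telescoping of $t_{\pi_{gh}(i)}^{-1}gh\,t_i$) and that the outcome is independent of the transversal up to conjugation in $\mathrm{Aut}(T_d)$. Two small points to tighten: in the forward-direction simplicity argument, the step "$\varphi$-invariance propagates this down each level" hides the computation $n_{|y}=g_{|0}\,\varphi(g^{-1}ng)\,g_{|0}^{-1}$ for $y=g\cdot 0$ and $n\in N$, which is where normality, $\varphi$-invariance, and the restriction rule $(gh)_{|v}=g_{|h\cdot v}h_{|v}$ all enter at once, so it deserves to be written out; and your transversal elements $t_i$ represent left cosets $t_iD$ (elements of $G/D$), not right cosets as stated.
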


\begin{proof}
We sketch the proof. First, assume that $G$ admits
a faithful self-similar action on $T_d$ that is transitive on the first level.
Define $D$ to be the stabilizer of $0$ in $G$, and define $\varphi:D\rar G$
by $\varphi(g): = g_{|0}$ (cf. \cite[Section 2.5.2]{NekSSgrp}).
Then $G$ is a simple virtual endomorphism of index $d$.
On the  other hand, let $\varphi:D\rar G$
be a simple virtual endomorphism of index $d$, and let $X=\{0,...,d-1\}$.
Choose representatives $f_0,...,f_{d-1}$ of the left cosets of $D$ in $G$,
and assume $f_0=1$. We define maps 
$G\times X\rar X$, $(g,i)\mapsto g\cdot i$, and $G\times X\rar G$, $(g,i)\mapsto g_{|i}$
as follows. Given $g$ and $i$, there exists a unique $j$ such that $f_j^{-1}gf_i\in D$;
define $g\cdot i = j$ and $g_{|i}=\varphi(f_j^{-1}gf_i)$. These two maps
define a reversible automaton, and with any initial state $g\in G$, there is an 
associated automorphism $\Psi_g\in\mr{Aut}(T_d)$
(cf. \cite[Section 1.3]{NekSSgrp}). One proves that
the function $\Psi:G\rar \mr{Aut}(T_d)$ so obtained is a faithful
self-similar action of $G$ on $T_d$ that is transitive on the first level
(cf. \cite[Theorem 3.1]{NSAutBinTree} and 
\cite[Proposition 2.7.4]{NekSSgrp}). Hence, $G$ is self-similar of index $d$.
\end{proof}

\begin{corollary}\label{cHssthenGss}
Let $G$ be a group and $d\ges 1$ be an integer. Let $H$ be a subgroup
of $G$ of finite index. If $H$ is self-similar of index $d$ then $G$
is self-similar of index $d\,[G:H]$.
\end{corollary}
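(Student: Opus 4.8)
The plan is to prove Corollary \ref{cHssthenGss} by combining Proposition \ref{pssiffve} with an explicit construction of a simple virtual endomorphism of $G$ from a given one of $H$. By Proposition \ref{pssiffve}, it suffices to show that if $H$ admits a simple virtual endomorphism of index $d$, then $G$ admits a simple virtual endomorphism of index $d\,[G:H]$. So let $\psi:D\rar H$ be a simple virtual endomorphism of $H$, where $D\les H$ with $[H:D]=d$.

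First I would observe that $D$ is a subgroup of $G$ of finite index, since $[G:D]=[G:H][H:D]=d\,[G:H]$. Then I would take $\varphi:D\rar G$ to be the composite of $\psi:D\rar H$ with the inclusion $H\hookrightarrow G$; explicitly, $\varphi(g):=\psi(g)$ for all $g\in D$, now regarded as an element of $G$. This is a group morphism from a finite-index subgroup of $G$ into $G$, hence a virtual endomorphism of $G$ of index $d\,[G:H]$, as required for the index count. The only remaining thing to verify is that $\varphi$ is \emph{simple} as a virtual endomorphism of $G$.

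The main point, and the step I expect to carry the content, is the simplicity. Suppose $N\nors G$ is a $\varphi$-invariant normal subgroup of $G$, i.e.\ $N\subseteq D$ and $\varphi(N)\subseteq N$. I would then consider $N\cap H$, or more directly note that since $N\subseteq D\les H$ we already have $N\subseteq H$, so in fact $N\subseteq H$. Because $N\nors G$ and $N\les H\les G$, the subgroup $N$ is normal in $H$ as well. Moreover $\varphi$ restricted to $N$ agrees with $\psi$ on $N$ (they have the same underlying map), so $\psi(N)=\varphi(N)\subseteq N$, meaning $N$ is a $\psi$-invariant normal subgroup of $H$. By the simplicity of $\psi$, we conclude $N=\{1\}$. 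Hence $\varphi$ has no non-trivial $\varphi$-invariant normal subgroups of $G$, so $\varphi$ is simple.

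I anticipate that the only subtlety worth flagging is the direction of the normality hypothesis: simplicity of $\varphi$ quantifies over subgroups normal in $G$, which is a stronger constraint than being normal in $H$, so the implication ``$N\nors G$ with $N\subseteq D$'' $\Rightarrow$ ``$N\nors H$ with $N\subseteq D$'' goes the easy way and the argument closes cleanly. Applying Proposition \ref{pssiffve} once more in the forward direction then yields that $G$ is self-similar of index $d\,[G:H]$, completing the proof.
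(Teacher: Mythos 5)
Your proposal is correct and follows essentially the same route as the paper: reduce to virtual endomorphisms via Proposition \ref{pssiffve}, extend the codomain of the given simple virtual endomorphism of $H$ to obtain one of $G$ of index $d\,[G:H]$, and verify simplicity by observing that any $\varphi$-invariant normal subgroup of $G$ lies in $H$, is normal in $H$, and is $\psi$-invariant, hence trivial. The only difference is cosmetic: you spell out the index computation and the normality transfer that the paper's proof treats as immediate.
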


\vspace{3mm}

One may further specialize the problem of self-similarity by considering
torsion-free $p$-adic analytic pro-$p$ groups.
For a comprehensive study of $p$-adic analytic pro-$p$ groups,
the reader may consult \cite{DixAnaProP}.
In low dimension matters are easy since such groups $G$ may be 
classified. 

\begin{proposition} \label{pdim2}
Let $p$ be a prime, $k \ges 1$ be an integer, and 
$G$ be a non-trivial 
torsion-free $p$-adic analytic pro-$p$ group with $\mr{dim}(G) \les 2$. 
Then $G$ is self-similar of index $p^k$.
\end{proposition}

\begin{proof2}
For each $G$, we exhibit a simple virtual endomorphism 
$\varphi:D\rar G$ of index $p^k$ and apply Proposition \ref{pssiffve}.
If  $\mr{dim}(G) =1$ then $G$  is isomorphic to $\bb{Z}_p$.  We define  
$\varphi: p^k\bb{Z}_p\rar \bb{Z}_p$
by $\varphi(a)= p^{-k}a$; the
simplicity of $\varphi$
follows from Lemma \ref{linvtdinf}
after observing that $D_\infty = \{0\}$.

The case $\mr{dim}(G) =2$ is richer.
The groups under consideration are classified 
as follows (see \cite[Propositions 7.1, 7.2]{GSKpsdimJGT}). 
Assume that $s\in\bb{N}\cup \{\infty\}$
and that $s\ges 1$ if $p\ges 3$, and $s\ges 2$ if $p=2$.
We define $G_+(s)$ for all $p$, and $G_-(s)$ for $p=2$ through
the following pro-$p$ presentations:
$$
G_+(s) = \langle x, y\, |\, [x,y]= x^{p^s}\rangle
\qquad\mbox{and}\qquad
G_-(s) = \langle x, y\, |\, [x,y]= x^{-2-2^s}\rangle,
$$
where, by convention, $p^\infty =0$.
In order to give the most uniform treatment as possible, 
we consider the presentation
$G=\langle x, y \, | \, [x,y]= x^{u-1}\rangle$
where $u$ takes values $u = 1+p^s$ for all $p$, and $u=-1-2^s$ for $p=2$.
We observe that any element of $G$ is of the form $y^\beta x^\alpha$
for unique $\alpha,\beta \in \bb{Z}_p$. Moreover, 
multiplication in $G$ is given by $(y^\beta x^\alpha)(y^\delta x^\gamma)
= y^{\beta + \delta}x^{\alpha u^{\delta}+\gamma}$.
We divide the proof of self-similarity in three cases, according to
whether $u=1$, $u\neq \pm 1$ or $u=-1$.
Angle brackets below mean `generated closed subgroup'.
\begin{enumerate}
 
 \item \textit{Case} $G=G_+(\infty) =  \bb{Z}_p\times \bb{Z}_p$. 
 Let $D = \langle x^{p^k},y\rangle$ and consider the homomorphism $\varphi:D\rar G$ given by 
 $\varphi(x^{p^k})= y$ and $\varphi(y) = x$.
 Then $D_\infty=\{1\}$, so $\varphi$ is simple by
 Lemma \ref{linvtdinf}.
 
 \item \textit{Cases} $G=G_+(s)$ and $G=G_-(s)$,  
 with $s\in \bb{N}$.  
 Let $D = \langle x^{p^k},y\rangle$ and consider the homomorphism $\varphi:D\rar G$ given by 
 $\varphi(x^{p^k})= x$ and $\varphi(y) = y$.
 Then $D_\infty=\langle y\rangle$.  Let $N$ be a non-trivial normal subgroup 
 of $G$ and let $1 \neq  y^\beta x^\alpha  \in N$.  If $\alpha \neq 0$, then $y^\beta x^\alpha \not\in D_\infty$.
 If $\alpha =0$ then $\beta \neq 0$, so $z = xy^\beta x^{-1}=
 x^{u^\beta-1}\in N$, where $u=1+p^s$ or $u =-1-p^s$ (according to the case).
 Since $u\neq \pm 1$ and $\beta\neq 0$, we have $u^\beta -1\neq 0$, 
 so  $z\not\in D_\infty$. By Lemma \ref{linvtdinf}, $N$ is not $\varphi$-invariant. Hence, $\varphi$ is simple. 

 \item \textit{Case} $G=G_-(\infty)$ for $p=2$.
 Let $D = \langle x, y^{2^k}\rangle$ and consider the homomorphism $\varphi:D\rar G$ given by 
 $\varphi(x)= y$ and $\varphi(y^{2^k}) = y$. 
 We observe that the multiplication in $D$ 
 reads $(y^{2^k\beta} x^\alpha)(y^{2^k\delta} x^\gamma) 
 = y^{2^k(\beta + \delta)}x^{\alpha +\gamma}$, so that 
 $D\simeq \bb{Z}_2\times \bb{Z}_2$.  Assume, by contradiction, 
 that there exists a non-trivial $\varphi$-invariant normal subgroup $N$ of $G$.
 Every element of $N$ is of the form $y^{2^k\beta}x^\alpha$, 
 for some $ \alpha,\beta \in \bb{Z}_2$.  Let $1 \neq z = y^{2^k\beta}x^\alpha  \in N$.
 If $\alpha =0$, then $\beta \neq 0$, so $ 1  \neq y^{2^k\beta}  \in N$.
 On the other hand, if $\alpha \neq 0$
then  $y^{-1}(y^{2^k\beta}x^\alpha)y =
 y^{2^k\beta}x^{-\alpha}\in N$, and consequently, $(y^{2^k\beta}x^{\alpha})
 (y^{2^k\beta}x^{-\alpha})^{-1} =x^{2\alpha}\in N$. Hence, $\varphi(x^{2\alpha})= y^{2\alpha}\in N$. 
 Let $t\in \bb{N}$ be the minimum 2-adic valuation of a $\gamma \in \bb{Z}_2$
 such that $y^\gamma \in N$ (observe that $t\ges k$).   
 Then $\varphi(y^\gamma)=
 y^{2^{-k}\gamma}\in N$ and $v_2(2^{-k}\gamma) = t-k\in \bb{N}$, contradicting
 the minimality of $t$. \hfill $\square$
 \end{enumerate}
\end{proof2}

\begin{corollary}\label{cpadic2ss}
Let $p$ be a prime and $G$ be a compact $p$-adic analytic group of dimension
$\mr{dim}(G)\les 2$. Then $G$ is self-similar.
\end{corollary}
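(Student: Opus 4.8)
The plan is to reduce the statement to Proposition~\ref{pdim2} by passing to a suitable open subgroup and then to transport self-similarity back up to $G$ via Corollary~\ref{cHssthenGss}. First I would invoke the structure theory of compact $p$-adic analytic groups (cf.\ \cite{DixAnaProP}): every such $G$ contains an open---hence finite-index---uniformly powerful pro-$p$ subgroup $H$. Since uniform pro-$p$ groups are torsion-free, and since the dimension is preserved under passing to open subgroups, $H$ is a torsion-free $p$-adic analytic pro-$p$ group with $\mr{dim}(H)=\mr{dim}(G)\les 2$. This is the step that removes both the torsion in $G$ and the fact that $G$ need not be pro-$p$.

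If $\mr{dim}(G)\in\{1,2\}$, then $H$ is non-trivial of dimension $1$ or $2$, so Proposition~\ref{pdim2} applies and shows that $H$ is self-similar (of index $p$). Corollary~\ref{cHssthenGss}, applied to the finite-index inclusion $H\les G$, then upgrades this immediately to $G$: the group $G$ is self-similar of index $p\,[G:H]$, which settles these two cases.

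The only point requiring separate care is the boundary case $\mr{dim}(G)=0$, that is, $G$ finite, which Proposition~\ref{pdim2} explicitly excludes (here $H$ degenerates to the trivial group). For this I would argue directly at the level of virtual endomorphisms rather than through Proposition~\ref{pdim2}: the trivial subgroup $D=\{1\}$ has finite index $|G|$ in $G$, and the trivial morphism $\varphi:\{1\}\rar G$ is a simple virtual endomorphism, because the only subgroup of $G$ contained in $D=\{1\}$ is $\{1\}$ itself, so no non-trivial normal subgroup of $G$ can be $\varphi$-invariant. By Proposition~\ref{pssiffve}, $G$ is then self-similar (of index $|G|$), completing the argument.

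I do not expect a substantial obstacle here: the whole content is the reduction via an open torsion-free subgroup, and the only genuine subtlety is the dimension-zero case, which is absorbed by the trivial-endomorphism observation above. Care should just be taken to state explicitly that uniform pro-$p$ groups are torsion-free and that passing to an open subgroup preserves the dimension, since these are exactly the facts that make Proposition~\ref{pdim2} applicable to $H$.
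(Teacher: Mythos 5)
Your proof is correct and takes essentially the same route as the paper: pass to an open torsion-free pro-$p$ subgroup $H$ (the paper cites \cite[Theorem 8.32]{DixAnaProP} for this), apply Proposition~\ref{pdim2} to $H$ when $\mr{dim}(G)\in\{1,2\}$, and lift self-similarity to $G$ via Corollary~\ref{cHssthenGss}. Your separate handling of $\mr{dim}(G)=0$ through the trivial virtual endomorphism $\{1\}\rar G$ and Proposition~\ref{pssiffve} is just an unwinding of the paper's step, which instead notes that the trivial group is self-similar of index $1$ and applies Corollary~\ref{cHssthenGss} once more.
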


\vspace{5mm}

Since $p$-adic analytic pro-$p$ groups are topologically 
finitely generated, by a result of Serre their topology is determined by the group
structure (see \cite[Corollary 1.21]{DixAnaProP};
see also \cite{NSfgprof1}). However, this is not true in general for profinite groups that are not finitely generated. Modifying the definition to cover 
this case, 
we say that a \textit{profinite group} $G$  is \textit{self-similar of index} $d$ 
if $G$ admits a faithful self-similar action on $T_d$ that is transitive 
on the first level  and moreover the associated group homomorphism
$\Psi: G\rar \mr{Aut}(T_d)$ is continuous.
The next proposition is a topological analogue of 
Proposition \ref{pssiffve}; we leave its
proof  to the reader.

\begin{proposition}\label{pssiffvetop}
Let $G$ be a profinite group and $d\ges 1$ be an integer. Then
$G$ is self-similar of index $d$ (as a profinite group)
if and only if there exists a continuous simple virtual endomorphism
$\varphi: D\rar G$ of index $d$ with $D$ open in $G$.
\end{proposition}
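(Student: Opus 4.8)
The plan is to reduce both implications to the purely algebraic Proposition \ref{pssiffve} and then to verify only the extra topological requirements: continuity of $\Psi$ in one direction, and continuity of $\varphi$ together with openness of $D$ in the other. Throughout I will use that $\mr{Aut}(T_d)=\varprojlim_n \mr{Aut}(T_d^{(n)})$, where $T_d^{(n)}$ is the finite tree obtained by truncating $T_d$ at level $n$. Consequently, a homomorphism $\Psi\colon G\rar \mr{Aut}(T_d)$ from a profinite group $G$ is continuous if and only if each composite $G\rar \mr{Aut}(T_d^{(n)})$ is continuous, i.e. if and only if every level stabilizer $K_n:=\{g\in G : g\cdot v = v \text{ for all } v\in T_d \text{ of length } n\}$ is open in $G$ (a homomorphism from a profinite group to a finite group is continuous exactly when its kernel is open, and $\ker(G\rar\mr{Aut}(T_d^{(n)}))=K_n$).

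For the forward implication, assume $\Psi$ is continuous and yields a faithful, self-similar, level-one-transitive action. I take $D:=\mr{Stab}_G(0)$ and $\varphi(g):=g_{|0}$, exactly as in the proof of Proposition \ref{pssiffve}; that proposition already gives that $\varphi$ is a simple virtual endomorphism of index $d$. The vertex stabilizer $\mr{Stab}_{\mr{Aut}(T_d)}(0)$ is open in $\mr{Aut}(T_d)$ (it is the preimage of a point-stabilizer under the continuous first-level action), so $D=\Psi^{-1}\big(\mr{Stab}_{\mr{Aut}(T_d)}(0)\big)$ is open by continuity of $\Psi$. For continuity of $\varphi$, I use that the restriction map $r_0\colon \mr{Stab}_{\mr{Aut}(T_d)}(0)\rar \mr{Aut}(T_d)$, $h\mapsto h_{|0}$, is continuous, because the action of $h_{|0}$ on level $n$ is read off from the action of $h$ on level $n+1$. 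As $\Psi$ is a continuous injection of a profinite group into the Hausdorff group $\mr{Aut}(T_d)$, it is a topological embedding; identifying $G$ with $\Psi(G)$, the map $\varphi$ is the restriction of $r_0$ to $D$, hence continuous.

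For the backward implication, assume $\varphi\colon D\rar G$ is a continuous simple virtual endomorphism with $D$ open of index $d$. Proposition \ref{pssiffve} already produces, from the bimodule $\varphi(G)G$, a faithful, self-similar, level-one-transitive action $\Psi$ on $T_d$ in which $D=\mr{Stab}_G(0)$ and $g_{|0}=\varphi(g)$ for $g\in D$. The only remaining point is that $\Psi$ is continuous, i.e. every $K_n$ is open, which I prove by induction on $n$, with $K_0=G$. Since the first level is transitive, the level-one pointwise stabilizer is $K_1=\bigcap_{x\in X}\mr{Stab}_G(x)$, a finite intersection of conjugates of the open subgroup $D$, hence open. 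For $g\in K_1$ all restrictions $g_{|x}$ are defined, and each map $\rho_x\colon K_1\rar G$, $g\mapsto g_{|x}$, is continuous: fixing coset representatives $t_x$ (with $t_0=1$, so $\mr{Stab}_G(x)=t_x D\,t_x^{-1}$), one has $g_{|x}=\varphi(t_x^{-1}g\,t_x)$ on $K_1$, a composite of the continuous conjugation $g\mapsto t_x^{-1}g\,t_x$ with the continuous map $\varphi$. Since a tree automorphism fixing level one pointwise fixes level $n$ pointwise precisely when each of its first-level restrictions fixes level $n-1$ pointwise, one obtains $K_n=\bigcap_{x\in X}\rho_x^{-1}(K_{n-1})$. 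By the inductive hypothesis $K_{n-1}$ is open, so each $\rho_x^{-1}(K_{n-1})$ is open in $K_1$, hence in $G$, and therefore $K_n$ is open; this completes the induction and shows $\Psi$ is continuous.

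The routine part is the forward direction, where openness of $D$ and continuity of $\varphi$ fall out immediately from the continuity of $\Psi$ and standard facts about $\mr{Aut}(T_d)$. The main obstacle is the backward direction, where one must upgrade the abstract action furnished by Proposition \ref{pssiffve} to a continuous one. The key device is the recursive description $K_n=\bigcap_{x}\rho_x^{-1}(K_{n-1})$ of the level stabilizers, together with the observation that each restriction map $\rho_x$ is, up to a fixed conjugation, just $\varphi$. This is exactly where the hypotheses that $\varphi$ is continuous and that $D$ is open are used, and it is what propagates openness from $D$ to all $K_n$ and thereby continuity to $\Psi$.
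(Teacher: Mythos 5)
Your proof is correct and takes essentially the same route as the paper's (sketched) proof: in the forward direction you deduce openness of $D$ and continuity of $\varphi$ from continuity of $\Psi$, and in the backward direction you prove continuity of $\Psi$ by showing that each level stabilizer $K_n = \Psi^{-1}(\mathrm{st}(n))$ is open, exactly as the paper indicates. The only difference is that you supply the details the paper omits (the compact-to-Hausdorff embedding argument for continuity of $\varphi$, and the recursion $K_n=\bigcap_{x}\rho_x^{-1}(K_{n-1})$ propagating openness), and these details are sound.
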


We close the section with a technical lemma which 
is used to show that the actions
considered in Remark \ref{rleveltrans}
are level-transitive and not just transitive on the first level.

\begin{lemma}\label{lphiregular}
Let $p$ be a prime, and let $G$ be a group such that the index of any
finite-index subgroup of $G$ 
is a power of $p$. 
Let $\varphi: D\rar G$
be a virtual endomorphism of $G$ of index $p$, and let $(D_n)_{n\in\bb{N}}$ be
the sequence of the domains of the powers of $\varphi$.
If $\varphi(D_{n+1})\not\subseteq D_{n+1}$ for all $n\ges 0$, then
$\varphi$ is regular, namely, $[D_n:D_{n+1}]=p$ for all $n\ges 0$.
\end{lemma}

\begin{proof} Let $d_n := [D_n:D_{n+1}]$ for all $n\ges 0$.
Fix $n\ges 0$ for a while, and let $\varphi_n: D_{n+1}\rar D_n$ be the restriction
of $\varphi$. Then $D_{n+2} = \varphi_n^{-1}(D_{n+1})$ and
$[D_{n+1}:D_{n+2}] = [\varphi(D_{n+1})D_{n+1}:D_{n+1}]$.
Observe that the right-hand side of the last equality is the cardinality of the
set of
left cosets of $D_{n+1}$ in $D_n$ that are included in $\varphi(D_{n+1})D_{n+1}$,
so that $[\varphi(D_{n+1})D_{n+1}:D_{n+1}]\les [D_n:D_{n+1}]$.
It follows that $d_{n+1}\les d_n$. Since $d_0 = p$, one may use induction
on $n$ to prove that $d_n\les p$ and that $[G:D_n]<\infty$, for all
$n\ges 0$. Now, since $\varphi(D_{n+1})\not\subseteq D_{n+1}$, we have  $d_{n+1}>1$ and, consequently,
 $d_n = p$ for all $n\ges 0$.
\end{proof}


\section{Self-similar Lie lattices}\label{secliealg}

Let $p$ be a prime number. We take $\bb{Z}_p$
as the coefficient ring of our modules and algebras, and
denote by $v_p:\bb{Z}_p\rar \bb{N}\cup\{\infty\}$ the $p$-adic
valuation. 
Our main interest is on Lie lattices, 
namely, Lie algebras whose
underlying module is a lattice (i.e., a finite-dimensional free module). 
At several points, we find more elegant
and convenient to work with antisymmetric algebras. 
The ``bracket'' of two elements in such algebras is denoted by $[x,y]$.
Let $L$ be an $n$-dimensional antisymmetric $\bb{Z}_p$-algebra, $n\in\bb{N}$. 
We say that $L$ is \textbf{self-similar} if it is
self-similar of index $d$ for some positive integer $d$. The \textbf{self-similarity index} $\sigma(L)$  of a self-similar $L$ is defined 
to be the least positive integer  $d$ such that $L$ is self-similar of index $d$. 
We say that $L$ is just infinite if 
any non-zero ideal of $L$ has dimension $n$.
Moreover, we say that $L$ is hereditarily just infinite
if any $n$-dimensional subalgebra of $L$ is just infinite.
For the lower central series of $L$, we use the convention
$\gamma_0(L) = L$ and $\gamma_{k+1}(L)=[L, \gamma_k(L)]$.

\smallskip
A statement similar to Corollary \ref{cHssthenGss} 
holds for Lie lattices.

\vspace{-2mm}

\begin{lemma}\label{lsubss}
Let $L$ be an $n$-dimensional antisymmetric $\bb{Z}_p$-algebra and $M\subseteq L$ be
a subalgebra of dimension $n$. Moreover, suppose that $M$ is self-similar of index
$p^k$. Then $L$ is self-similar of index $p^{k}[L:M]$.
\end{lemma}

\begin{remark}\label{remdim2alg} 
Every non-trivial 
$\bb{Z}_p$-Lie lattice  $L$  of dimension $n \les 2$ is self-similar  of index
$p^k$ for all $k\ges 1$. 
We show this fact for $n=2$; 
the case $n=1$ has already been treated for $\bb{Z}_p$
as a group. If $L$  is a $2$-dimensional $\bb{Z}_p$-Lie lattice, 
then there is some  $s\in\bb{N}\cup \{\infty\}$ such that $L$ is isomorphic to
$L(s) = \langle x, y \,|\, [x,y]= p^sx\rangle$ 
(see, for instance, \cite[Section 7.1]{GSKpsdimJGT}), 
where  $p^\infty: = 0$. Consider the subalgebra 
$M = \langle p^k x,y\rangle$ of $L(s)$.  
For $s=\infty$ we define a homomorphism $\varphi: M\rar L(s)$ by
$\varphi(p^kx)= y$ and $\varphi(y) = x$,
while for $s\in \bb{N}$ we define a homomorphism 
$\varphi: M\rar L(s)$ by $\varphi(p^k x)= x$ and $\varphi(y) = y$. 
It is not difficult to see that  $\varphi$ is a simple virtual 
endomorphism of $L$ of index $p^k$. Note that the virtual endomorphisms 
in this example are exactly those of Proposition \ref{pdim2} 
(except for the extra groups that appear for $p = 2$).
\end{remark}

Next we deal with 3-dimensional $\bb{Z}_p$-Lie lattices. 

\begin{lemma} Let $L$ be a 3-dimensional $\bb{Z}_p$-lattice,
and let $\bm{x}=(x_0,x_1,x_2)$ be a basis of $L$.

\begin{enumerate} \label{lantbrmat}
\item The following formulas establish a bijection
(which depends on $\bm{x}$)
between antisymmetric brackets on $L$ and $3 \times 3$ matrices $A$ with
coefficients in $\bb{Z}_p$:
$$
\left\{
\begin{array}{lcl}
[x_1, x_2] & = & \sum_{i=0}^2 A_{i0} x_i \\[3pt]
[x_2, x_0] & = & \sum_{i=0}^2 A_{i1} x_i\\[3pt]
[x_0, x_1] & = & \sum_{i=0}^2 A_{i2} x_i. 
\end{array}
\right.
$$
\item Assume that an antisymmetric bracket over $L$ is given,
and let $A$ be its matrix with respect to $\bm{x}$.
Let $M$ be 3-dimensional submodule of $L$, and 
$\bm{y} = (y_0,y_1, y_2)$ be a basis of $M$
(the case $M=L$, change of basis, is included). Let $U$ be
the matrix of $\bm{y}$ with respect to $\bm{x}$, namely,
$y_j = \sum_i U_{ij} x_i$. Since $\mr{det}(U)\neq 0$, 
the formula
\begin{equation}\label{ebchbasis}
B = \mr{det}(U) U^{-1} A (U^{-1})^T
\end{equation}
defines a matrix $B\in gl_3(\bb{Q}_p)$. The following properties hold.
 \begin{enumerate}
 \item $M$ is a subalgebra of $L$ if and only if
 $B$ has coefficients in $\bb{Z}_p$. 
 \item If $M$ is a subalgebra then $B$ is the matrix
 of $M$ with respect to $\bm{y}$.
 \end{enumerate}
\end{enumerate}
\end{lemma}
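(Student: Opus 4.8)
The plan is to set up coordinates via the antisymmetric bracket and reduce everything to a matrix identity. The key observation is that an antisymmetric bracket on a $3$-dimensional lattice is governed by a single $3\times 3$ matrix, so I want to track how that matrix transforms under a change of basis. The cross-product structure implicit in part (1) is the essential tool: for a basis $\bm{x}=(x_0,x_1,x_2)$, the ``dual'' products $x_1\wedge x_2$, $x_2\wedge x_0$, $x_0\wedge x_1$ behave like the adjugate/cofactor operation, and this is exactly what produces the factor $\mr{det}(U)\,(U^{-1})^T$ in equation \eqref{ebchbasis}.

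**First I would** establish the transformation rule for brackets directly. Given $\bm{y}=(y_0,y_1,y_2)$ with $y_j=\sum_i U_{ij}x_i$, I would compute $[y_1,y_2]$ by bilinearity and antisymmetry. The crucial combinatorial fact is that if $y_r=\sum_i U_{ir}x_i$, then
$$
[y_1,y_2]=\sum_{k,l}U_{k1}U_{l2}\,[x_k,x_l]=\sum_{k<l}(U_{k1}U_{l2}-U_{l1}U_{k2})\,[x_k,x_l],
$$
and the coefficients $U_{k1}U_{l2}-U_{l1}U_{k2}$ are precisely $2\times 2$ minors of $U$. Writing $[x_k,x_l]$ in terms of the three generating products via part (1) and the matrix $A$, and organizing the minors, one recognizes the cofactor matrix of $U$, i.e. $\mr{det}(U)\,(U^{-1})^T$. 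Carrying this through for all three products $[y_1,y_2],[y_2,y_0],[y_0,y_1]$ and collecting, the matrix $B'$ defined by $[y_1,y_2]=\sum_i B'_{i0}\,y_i$ (expressing the results back in the basis $\bm{y}$, which introduces a further $U^{-1}$ on the left) turns out to equal $\mr{det}(U)\,U^{-1}A(U^{-1})^T$. This identity is the heart of the whole lemma.

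**With that identity in hand,** part (2)(a) and (2)(b) follow almost formally. For the ``only if'' direction of (a): if $M$ is a subalgebra, then each $[y_r,y_s]$ lies in $M$, so its coordinates with respect to $\bm{y}$ are in $\bb{Z}_p$; by the computation above those coordinates form the matrix $B$, whence $B\in gl_3(\bb{Z}_p)$, and this simultaneously proves (b). For the ``if'' direction: if $B$ has $\bb{Z}_p$-coefficients, then the products of basis elements of $M$ expand over $\bm{y}$ with $\bb{Z}_p$-coefficients, so by bilinearity every bracket of elements of $M$ stays in $M$, i.e. $M$ is a subalgebra. Part (1) itself I would dispatch quickly: antisymmetry forces $[x_i,x_i]=0$ and $[x_j,x_i]=-[x_i,x_j]$, so a bracket is determined by its values on the three ordered pairs $(x_1,x_2),(x_2,x_0),(x_0,x_1)$, giving exactly the three columns of $A$; conversely any such $A$ extends uniquely by bilinearity and antisymmetry.

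**The main obstacle** I expect is purely bookkeeping: correctly matching the sign conventions and the cyclic labeling $(1,2),(2,0),(0,1)$ of the columns of $A$ with the corresponding minors of $U$, so that the cofactor structure emerges as $\mr{det}(U)(U^{-1})^T$ rather than some transpose or sign-twisted variant. Since no Jacobi identity is used anywhere (the statement concerns arbitrary antisymmetric brackets), there is no associativity or Lie-structure subtlety to worry about; the entire content is multilinear algebra over $\bb{Z}_p$, and the well-definedness of $B$ over $\bb{Q}_p$ is guaranteed by $\mr{det}(U)\neq 0$. I would verify the minor-to-cofactor identification carefully on one product, say $[y_1,y_2]$, and then invoke the cyclic symmetry of the indices to obtain the other two without repeating the computation.
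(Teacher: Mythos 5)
Your proposal is correct and follows essentially the same route as the paper: part (1) by bilinearity plus antisymmetry, and part (2) by computing $[y_1,y_2]$ via the $2\times 2$ minors of $U$, recognizing the cofactor matrix $\widehat{U}=\mr{det}(U)(U^{-1})^T$, converting back to $\bm{y}$-coordinates with a left factor $U^{-1}$ to get $B=U^{-1}A\widehat{U}=\mr{det}(U)U^{-1}A(U^{-1})^T$, and then reading off (a) and (b) from the fact that $M$ is a subalgebra iff the brackets of basis elements have $\bb{Z}_p$-coordinates with respect to $\bm{y}$. The paper merely compresses the minor computation into the single line $[y_{i_0},y_{i_1}]=\sum_j (A\widehat U)_{ji_2}x_j=\sum_l B_{li_2}y_l$ (citing Jacobson), which is exactly what you spell out.
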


\begin{proof} The proof is straightforward.
The reader may want to consult
\cite[page 13]{JacLieA} for similar arguments.
\end{proof}

\vspace{0mm}

\begin{definition}\label{ddiagbasis3}
Let $L$ be a 3-dimensional antisymmetric $\bb{Z}_p$-algebra, and let $\bm{x}$ be
a basis of $L$. Denote by $A$ the matrix of $L$ with respect to $\bm{x}$.
We say that $\bm{x}$ is \textbf{diagonalizing} if and only if
$A$ is diagonal, say $A = \mr{diag}(a_0,a_1,a_2)$.
We say that $\bm{x}$ is \textbf{well diagonalizing}
if and only if it is diagonalizing and $v_p(a_0)\les v_p(a_1) \les v_p(a_2)$ 
(the case $v_p(a_i)=\infty$ is not excluded).
\end{definition}

\begin{remark}\label{rdiagbasis}
A 3-dimensional antisymmetric $\bb{Z}_p$-algebra $L$ 
admits 
a diagonalizing basis if and only if it admits a well diagonalizing one.
This follows from the following more general observation.
Assume that $L$ admits a diagonalizing basis 
$\bm{x}$ and that 
$A= \mr{diag}(a_0,a_1,a_2)$ is the matrix of 
$L$ with respect to $\bm{x}$. Through 
a ``diagonal'' change of basis, one can make an arbitrary permutation
of the $a_i$'s, and multiply all the $a_i$'s by the same invertible
element of $\bb{Z}_p$.
\end{remark}

\begin{remark} \label{rsinv}
Let $L$ be a 3-dimensional antisymmetric $\bb{Z}_p$-algebra. Then there exist 
$s_0,s_1,s_2\in \bb{N}\cup\{\infty\}$ such that 
$L/[L,L]\simeq \bigoplus_{i=0}^2 (\bb{Z}/p^{s_i}\bb{Z})$ 
(where $p^\infty:=0$). The exponents $s_0,s_1,s_2$ of the invariant factors
will be called the $\bm{s}$\textbf{-invariants} 
of $L$ (they are all finite exactly when $L$ is unsolvable). 
Suppose that $L$ admits a diagonalizing basis and let
 $A=\mr{diag}(a_0,a_1,a_2)$ be the associated matrix.
Then the valuations $v_p(a_i)$ for $i\in\{0,1,2\}$
are the $s$-invariants of $L$.
\end{remark}

\begin{proposition}\label{p3dimins}
Let $L$ be a 3-dimensional antisymmetric $\bb{Z}_p$-algebra. The following holds.
\begin{enumerate}
\item \label{liliffsnzd2}
$L$ is an unsolvable Lie lattice if and only if the
matrix of $L$ with respect to some (equivalently, any) basis
of $L$ is symmetric and non-degenerate.
\item \label{p3diminsItem2}
Assume that $p\ges 3$ and that $L$ is, moreover, an unsolvable
Lie lattice. Then
 \begin{enumerate}
 \item \label{lexdiagbas2} $L$ admits a well diagonalizing basis.
 \item \label{l3dimhji2} $L$ is hereditarily just infinite.
 \end{enumerate}
\end{enumerate}
\end{proposition}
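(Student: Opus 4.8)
The plan is to prove the three claims of Proposition~\ref{p3dimins} in the order (1), (2a), (2b), since each relies on the previous. For part~(1), the key is the bracket-matrix correspondence of Lemma~\ref{lantbrmat}. First I would recall that the Jacobi identity, when written out on the basis triple $(x_0,x_1,x_2)$ using the antisymmetric bracket, produces a single scalar condition. A direct computation of $[[x_1,x_2],x_0]+[[x_2,x_0],x_1]+[[x_0,x_1],x_2]$ in terms of the matrix $A$ shows that Jacobi holds if and only if $A$ is symmetric: the antisymmetric part of $A$ governs the obstruction. Thus an antisymmetric algebra with matrix $A$ is a Lie algebra precisely when $A$ is symmetric. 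For unsolvability, I would invoke Remark~\ref{rsinv}: $L$ is unsolvable iff the $s$-invariants are all finite, iff $[L,L]$ has dimension $3$, iff $L/[L,L]$ is finite. Since the columns of $A$ span $[L,L]$, the latter has full dimension exactly when $A$ is non-degenerate. Combining, $L$ is an unsolvable Lie lattice iff $A$ is symmetric and non-degenerate. The basis-independence (``some'' equivalent to ``any'') follows from the change-of-basis formula \eqref{ebchbasis}: conjugation $B=\det(U)U^{-1}A(U^{-1})^T$ preserves both symmetry (as it is a congruence, $U^{-1}A(U^{-1})^T$ stays symmetric when $A$ is) and non-degeneracy (since $\det(U)\neq 0$).

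For part~(2a), I would use the symmetry of $A$ established in~(1) together with $p\ges 3$. The statement that $L$ admits a diagonalizing basis is, via Lemma~\ref{lantbrmat}(2) and formula \eqref{ebchbasis}, exactly the statement that the symmetric matrix $A$ can be diagonalized by a congruence transformation $A\mapsto \det(U)U^{-1}A(U^{-1})^T$ with $U\in GL_3(\mathbb{Z}_p)$ producing an integral diagonal result. The natural approach is the standard diagonalization of symmetric bilinear forms over the local ring $\mathbb{Z}_p$, which works for $p$ odd because $2$ is a unit: one locates an entry of minimal valuation on the diagonal (or creates one on the diagonal from an off-diagonal entry of minimal valuation, again using that $2\in\mathbb{Z}_p^*$), uses it as a pivot to clear its row and column by integral elementary operations, and proceeds by induction on the dimension. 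Once a diagonalizing basis exists, Remark~\ref{rdiagbasis} upgrades it to a well diagonalizing one by permuting the diagonal entries so that $v_p(a_0)\les v_p(a_1)\les v_p(a_2)$.

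For part~(2b), hereditary just infiniteness, I would take an arbitrary $3$-dimensional subalgebra $M\subseteq L$ and show every non-zero ideal of $M$ has dimension $3$. The natural reduction is first to prove $L$ itself is just infinite and then observe that $M$ is again a $3$-dimensional unsolvable Lie lattice (its matrix $B$ is symmetric and non-degenerate by part~(1), being a congruent image of the symmetric non-degenerate $A$), so that just infiniteness of $M$ follows by the same argument applied to $M$. To show a $3$-dimensional unsolvable $L$ is just infinite, suppose $I$ is a non-zero ideal with $\dim I<3$; then $I\otimes\mathbb{Q}_p$ is a proper non-zero ideal of the $\mathbb{Q}_p$-Lie algebra $L\otimes\mathbb{Q}_p$. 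Here I would use that the symmetric non-degenerate matrix forces $L\otimes\mathbb{Q}_p$ to be simple: over $\mathbb{Q}_p$ the $3$-dimensional Lie algebras with non-degenerate symmetric structure matrix are exactly $sl_2(\mathbb{Q}_p)$ and $sl_1(\mathbb{D}_p)$, both of which are simple and admit no proper non-zero ideals. This contradiction forces $\dim I=3$.

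The main obstacle I expect is part~(2a): the congruence-diagonalization over $\mathbb{Z}_p$ must be carried out with integral ($\mathbb{Z}_p$-invertible) transformations and must respect the extra determinant factor $\det(U)$ in \eqref{ebchbasis}, which differs from the plain congruence $U^TAU$ used in classical form theory. I would need to check carefully that the elementary operations realizing diagonalization correspond to genuine changes of basis $U\in GL_3(\mathbb{Z}_p)$ and that the scalar $\det(U)$, being a unit, does not disturb integrality or the valuation ordering of the resulting diagonal entries. The odd-prime hypothesis enters precisely in the pivoting step, where halving is required to move a minimal-valuation off-diagonal entry onto the diagonal.
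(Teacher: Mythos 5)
Your proof of part (1) rests on a false claim: that the Jacobi identity holds if and only if the matrix $A$ is symmetric. Only one direction is true. Writing the antisymmetric part of $A$ as a vector $v=(v_0,v_1,v_2)$ (so that $A-A^T$ has entries $\pm v_l$ off the diagonal, as in the paper's Lemma \ref{ljacobirole}), the basis computation gives
$$ J(x_0,x_1,x_2)=\sum_j \Bigl(\sum_l v_l A_{jl}\Bigr)x_j, $$
so the Jacobi identity is equivalent to $Av=0$, \emph{not} to $v=0$. When $A$ is degenerate, a non-zero $v$ can lie in its kernel: the solvable Lie lattice with $[x_0,x_1]=x_1$ and $[x_1,x_2]=[x_2,x_0]=0$ has matrix whose only non-zero entry is $A_{12}=1$, which is not symmetric, yet it satisfies Jacobi (here $v=(1,0,0)^T$ and $Av=0$). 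Hence your statement ``an antisymmetric algebra with matrix $A$ is a Lie algebra precisely when $A$ is symmetric'' is wrong, and with it your derivation of the forward implication of part (1). The gap is repairable by reordering the argument exactly as the paper does (Lemma \ref{ljacobirole}, item (4)): from unsolvability first deduce $\mr{dim}[L,L]=3$, hence $A$ non-degenerate (Lemma \ref{l3dimasymins}); then Jacobi reads $Av=0$ with $A$ invertible over $\bb{Q}_p$, which forces $v=0$, i.e.\ $A=A^T$. The reverse implication (symmetric and non-degenerate $\Rightarrow$ unsolvable Lie lattice) you state correctly.

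Parts (2a) and (2b) are sound. For (2a) you follow essentially the paper's route (classical congruence-diagonalization of symmetric matrices over $\bb{Z}_p$ for odd $p$, then Remark \ref{rdiagbasis}); the worry you raise about the factor $\mr{det}(U)$ in Equation (\ref{ebchbasis}) is settled by a one-line observation (cf.\ Lemma \ref{lLeqMC}): if $V^TAV$ is diagonal with $V\in GL_3(\bb{Z}_p)$, take $U=(V^T)^{-1}$, so that $\mr{det}(U)\,U^{-1}A(U^{-1})^T=\mr{det}(U)\,V^TAV$ is still diagonal and integral, since $\mr{det}(U)\in\bb{Z}_p^*$; being a unit, it also does not disturb the valuations. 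For (2b) you take a genuinely different route: the paper stays inside $\bb{Z}_p$, proving by an explicit commutator computation that any $3$-dimensional unsolvable lattice admitting a diagonalizing basis is just infinite (Lemma \ref{l3dimhjinew}), and combining this with Lemmas \ref{ldimllmm} and \ref{l3dimasymins}; you instead tensor with $\bb{Q}_p$ and quote simplicity of the two $3$-dimensional unsolvable $\bb{Q}_p$-Lie algebras. Your argument works, and in fact needs less than the classification you invoke: any $3$-dimensional Lie algebra over a field with $[L,L]=L$ is simple, since a proper non-zero ideal and the corresponding quotient would both have dimension $\les 2$, hence be solvable, making $L$ solvable. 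This version has the advantage of not using part (2a) at all, at the price of leaving the integral setting; but note that both your (2a) and (2b) cite part (1), so the error there must be repaired before the rest stands.
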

\begin{proof}
For part (1), symmetry
is related to the Jacobi identity (cf. \cite[page 13]{JacLieA}), while for 
the relation between non-degeneracy and unsolvability we argue as follows.
Observe that $L$ is 
unsolvable if and only if $\mr{dim}[L,L]=3$. 
Moreover, if $\bm{x}=(x_0,x_1,x_2)$ is a basis of $L$ then $[L,L]$
is generated by $y_0=[x_1,x_2]$, $y_1=[x_2,x_0]$ and $y_2=[x_0,x_1]$.
By definiton, the matrix of $L$ with respect to $\bm{x}$ has the coordinates of
the $y_i$'s on its columns, so that the matrix is non-degenerate if and only
if $\mr{dim}[L,L]=3$.

Item (a) of part (2) follows from part (1),
Equation \ref{ebchbasis} of Lemma \ref{lantbrmat}, and  
the well known fact that, for $p\ges 3$, 
any symmetric matrix $A\in gl_3(\bb{Z}_p)$
is diagonalizable through a congruence
$A\,\mapsto\,  V^TAV$ with $V\in GL_3(\bb{Z}_p)$, 
see \cite{SerArithmetic}, for instance.
Finally, item (b) of part (2) 
follows from item (a) and the fact that any 3-dimensional unsolvable
$\bb{Z}_p$-Lie lattice that admits a diagonalizing basis is just
infinite.
\end{proof}
\vspace{-2mm}

\begin{proposition}\label{pisotind}
Let $L$ be a 3-dimensional unsolvable antisymmetric $\bb{Z}_p$-algebra 
and $M, N$ be 3-dimensional
subalgebras of $L$. If $M\simeq N$ then $[L:M]=[L:N]$.
\end{proposition}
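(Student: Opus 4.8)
The plan is to use the $p$-adic valuation of the structure matrix determinant as a common bridge between the index of a $3$-dimensional subalgebra in $L$ and its isomorphism type. Fix a basis $\bm{x}$ of $L$ with structure matrix $A$; since $L$ is unsolvable, $A$ is symmetric and non-degenerate by Proposition \ref{p3dimins}(1), so $\det(A)\neq 0$. Given a $3$-dimensional subalgebra $M$ with basis $\bm{y}$ and change-of-basis matrix $U$ (so $y_j=\sum_i U_{ij}x_i$), let $B=\det(U)\,U^{-1}A(U^{-1})^T$ be its structure matrix as in Equation \eqref{ebchbasis} of Lemma \ref{lantbrmat}. First I would record the determinant identity $\det(B)=\det(U)\det(A)$, which follows from $\det(\lambda X)=\lambda^3\det(X)$ for $3\times 3$ matrices applied with $\lambda=\det(U)$. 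Since $M$ is $3$-dimensional, $\det(U)\neq 0$, so $\det(B)\neq 0$ as well.

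For the index side: viewing $M$ as the image of $U$ acting on the coordinate lattice, one has $[L:M]=p^{v_p(\det(U))}$, and combining this with the determinant identity yields
$$[L:M]=p^{\,v_p(\det(B))-v_p(\det(A))}.$$
As $\det(A)$ depends only on $L$ and not on the choice of $\bm{y}$, the index of $M$ is completely controlled by $v_p(\det(B))$.

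For the isomorphism side: next I would compute the abelianization $M/[M,M]$. By bilinearity and antisymmetry, $[M,M]$ is the submodule of $M$ spanned by the three brackets $[y_{i_0},y_{i_1}]$, whose coordinate vectors in the basis $\bm{y}$ are, by Lemma \ref{lantbrmat}, exactly the columns of $B$; hence $[M,M]$ corresponds to $B\bb{Z}_p^3$ under the coordinate isomorphism $M\simeq\bb{Z}_p^3$, and therefore $|M/[M,M]|=[\,\bb{Z}_p^3:B\bb{Z}_p^3\,]=p^{v_p(\det(B))}$ (this is finite because $\det(B)\neq 0$). Thus $v_p(\det(B))$ equals the logarithm, base $p$, of the order of the abelianization $M/[M,M]$ (equivalently, the sum of the $s$-invariants of $M$ from Remark \ref{rsinv}), which is manifestly an invariant of the isomorphism type of $M$ as an algebra.

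Putting the two sides together gives the result: if $M\simeq N$, then $M/[M,M]\simeq N/[N,N]$, so $v_p(\det(B_M))=v_p(\det(B_N))$; since both indices equal $p^{\,v_p(\det(B))-v_p(\det(A))}$ with the same $\det(A)$, we conclude $[L:M]=[L:N]$. I do not expect a serious obstacle here; the only points requiring care are verifying the determinant identity and confirming that all determinants in play are nonzero (which is exactly where unsolvability of $L$, giving $\det(A)\neq 0$, is used together with $\det(U)\neq 0$). I note in particular that this argument is uniform in $p$ and does not require the well diagonalizing basis of Proposition \ref{p3dimins}(2).
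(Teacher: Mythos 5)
Your proof is correct and follows essentially the same route as the paper: the bridging invariant in both cases is the order of the abelianization $M/[M,M]$, and your identity $[L:M]=p^{\,v_p(\det(B))-v_p(\det(A))}$ is precisely item (2) of the paper's Lemma \ref{lindllmm}, namely $[M:[M,M]]=[L:M]\cdot[L:[L,L]]$, which the paper establishes via compatible (Smith normal form) bases where you instead use the determinant identity $\det(B)=\det(U)\det(A)$. Both arguments then conclude identically from the observation that $M\simeq N$ forces $[M:[M,M]]=[N:[N,N]]$.
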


\begin{proof}
There exist bases $(x_0, x_1, x_2)$ and $(y_0, y_1, y_2)$ of $L$ and $M$ respectively
such that $y_i = p^{k_i}x_i$ for $i=0,1,2$ and some $k_i\in \bb{N}$.
Let $k = k_0+k_1+k_2$, so that $[L:M] = p^k$.
By unsolvability, the ordered sets
$([x_1,x_2], [x_2,x_0], [x_0, x_1])$ and 
$([y_1,y_2], [y_2,y_0], [y_0, y_1])$ form bases of $[L,L]$ and $[M,M]$ respectively. 
The matrix of this basis of $[M,M]$
with respect to the one of $[L,L]$ is diagonal with entries
$p^{k_1+k_2}$, $p^{k_2+k_0}$ and $p^{k_0+k_1}$, thus 
$[[L,L]:[M,M]]=p^{2k}=[L:M]^2$.
It follows that $[M:[M,M]\,] = [L:M]\cdot [L:[L,L]\,]$ (and the same for $N$).
Since $M\simeq N$ implies that $[M:[M,M]]= [N:[N,N]]$, we have that  $[L:M]=[L:N]$.
\end{proof}

\vspace{5mm}
\vspace{-2mm}

The following conjecture is a generalization of the above property. We also include
a version for groups.
\vspace{-1mm}

\begin{conjecture}\label{conjindexLA}
Let $L$ be a just-infinite $\bb{Z}_p$-Lie lattice with $\mr{dim}(L)>1$, 
and let  $M, N$ be 
subalgebras of $L$ of dimension  $\mr{dim}(L)$. 
If $M\simeq N$ then $[L:M]=[L:N]$.
\end{conjecture}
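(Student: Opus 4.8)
The plan is to clear denominators by extending scalars to $\bb{Q}_p$ and to turn the statement about indices into one about determinants of Lie-algebra automorphisms. Set $n:=\mr{dim}(L)$ and $V:=L\otimes_{\bb{Z}_p}\bb{Q}_p$, the associated $\bb{Q}_p$-Lie algebra. Since $M$ and $N$ have dimension $n$, they are full sublattices of $L$, so $M\otimes_{\bb{Z}_p}\bb{Q}_p=V=N\otimes_{\bb{Z}_p}\bb{Q}_p$. Given an isomorphism $f\colon M\ra N$ of $\bb{Z}_p$-Lie lattices, I would extend it $\bb{Q}_p$-linearly to $\tilde f\colon V\ra V$; bilinearity of the bracket shows that $\tilde f$ is an automorphism of $V$ with $\tilde f(M)=N$. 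This is the step that replaces the computational, structure-matrix argument used for $n=3$ in Proposition \ref{pisotind}.

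Next I would record how the index transforms under $\tilde f$. Fix a $\bb{Z}_p$-basis of $L$, let $P\in gl_n(\bb{Z}_p)$ be the matrix whose columns are a basis of $M$, and let $F\in GL_n(\bb{Q}_p)$ be the matrix of $\tilde f$; then $N=\tilde f(M)$ is spanned by the columns of $FP$, so $[L:M]=p^{v_p(\mr{det}P)}$ and $[L:N]=p^{v_p(\mr{det}(FP))}=[L:M]\cdot p^{v_p(\mr{det}\tilde f)}$. The conjecture therefore reduces to showing that every automorphism $\tilde f$ of $V$ has $\mr{det}\tilde f\in\bb{Z}_p^*$, i.e. $v_p(\mr{det}\tilde f)=0$.

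To establish this I would use just-infiniteness to control the structure of $V$. A proper nonzero ideal $W$ of $V$ would intersect $L$ in a nonzero ideal $W\cap L$ of $L$ of dimension $\mr{dim}(W)<n$, contradicting just-infiniteness; conversely a proper nonzero ideal of $L$ of smaller dimension tensors up to one of $V$. Hence, as soon as $n\ges 2$, just-infiniteness forces $V$ to be a simple $\bb{Q}_p$-Lie algebra, in particular semisimple, so by Cartan's criterion (valid in characteristic $0$) its Killing form $\kappa$ is non-degenerate. Every automorphism preserves $\kappa$, because $\kappa(\tilde f x,\tilde f y)=\mr{tr}\big(\tilde f\,\mr{ad}(x)\,\mr{ad}(y)\,\tilde f^{-1}\big)=\kappa(x,y)$; writing this as $F^TKF=K$ with $K$ the invertible Gram matrix of $\kappa$ yields $(\mr{det}\tilde f)^2=1$, so $\mr{det}\tilde f=\pm 1\in\bb{Z}_p^*$. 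Together with the index formula this gives $[L:M]=[L:N]$.

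The delicate point — and the reason this deserves to be flagged rather than dismissed as routine — is precisely the implication ``just-infinite $\then$ semisimple''. The entire argument collapses exactly where $V$ fails to be semisimple, and the statement is genuinely false in the degenerate range: for $n=1$ one may take $L=\bb{Z}_p$ abelian with $M=p\bb{Z}_p\simeq p^2\bb{Z}_p=N$ and yet $[L:M]\neq[L:N]$, the obstruction being that the Killing form vanishes identically. Thus the hypothesis must be read so that $V$ is forced to be semisimple (equivalently, $n\ges 2$, where the ideal-correspondence above supplies a non-degenerate automorphism-invariant form). The main work I would anticipate is a careful, uniform treatment of these low-dimensional and non-semisimple boundary cases, together with a check that no subtlety of $\bb{Q}_p$-rationality (e.g. $V$ simple but not absolutely simple) interferes with the non-degeneracy of $\kappa$ — it does not, since Cartan's criterion is insensitive to the ground field in characteristic $0$.
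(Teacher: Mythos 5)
The statement you were asked to prove is not actually proved in the paper: it appears there as Conjecture \ref{conjindexLA} and is left open, the paper establishing only the 3-dimensional unsolvable case (Proposition \ref{pisotind}), whose proof is a one-line reduction to Lemma \ref{lindllmm} — since $[M:[M,M]]=[L:M]\cdot[L:[L,L]]$, the isomorphism invariant $[M:[M,M]]$ determines the index. So there is no paper proof to compare against, and I have checked your argument on its own merits. For $\mr{dim}(L)\ges 2$ it is correct and settles the conjecture in that range, by a route quite different from (and more general than) the paper's dimension-3 computation. Each step holds up: $\tilde f=f\otimes\mr{id}$ is an automorphism of $V=L\otimes_{\bb{Z}_p}\bb{Q}_p$ carrying $M$ onto $N$, and $[L:N]=[L:M]\,p^{v_p(\det\tilde f)}$ is the standard Smith-normal-form computation; the contraction $W\mapsto W\cap L$ sends a proper nonzero ideal of $V$ to a nonzero ideal of $L$ of rank $\dim_{\bb{Q}_p}(W)<\mr{dim}(L)$, so just-infiniteness leaves $V$ with no proper nonzero ideals, and for $\mr{dim}(L)\ges 2$ it also rules out abelian $V$ (every line would be such an ideal), so $V$ is simple and Cartan's criterion gives a non-degenerate Killing form over $\bb{Q}_p$; finally, invariance of the Killing form under automorphisms forces $(\det\tilde f)^2=1$. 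Where the paper's dimension-3 argument is elementary, valid for $p=2$ and even for unsolvable antisymmetric algebras that are not Lie, but is locked to dimension 3 by its reliance on the non-degenerate structure matrix, yours needs the Jacobi identity (to have a Killing form) and just-infiniteness, but works in every dimension — which is exactly what the conjecture asks for.

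Your flag on the degenerate case is also correct, and it exposes a genuine (if easily repaired) defect of the conjecture as stated rather than of your proof: under the paper's definition of just infinite, $L=\bb{Z}_p$ with zero bracket qualifies (every nonzero ideal is some $p^k\bb{Z}_p$, of dimension $1=\mr{dim}(L)$), and $M=p\bb{Z}_p\simeq p^2\bb{Z}_p=N$ while $[L:M]=p\neq p^2=[L:N]$. Thus Conjecture \ref{conjindexLA} is false as literally stated and must be read with the additional hypothesis $\mr{dim}(L)\ges 2$ (equivalently, $L$ non-abelian), under which your argument proves it. The same one-dimensional example, read through $G=\bb{Z}_p$, $H=p\bb{Z}_p$, $K=p^2\bb{Z}_p$, shows that the companion Conjecture \ref{conjindexGP} requires the analogous amendment.
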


\vspace{-2mm}

\begin{conjecture}\label{conjindexGP}
Let $G$ be a torsion-free just-infinite $p$-adic analytic pro-$p$ group
with $\mr{dim}(G)>1$, 
and let $H, K$ be open subgroups  of $G$.  
If $H\simeq K$ then $[G:H]=[G:K]$.
\end{conjecture}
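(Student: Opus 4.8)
The statement is a conjecture, so what follows is an attack rather than a complete argument; the plan is to reduce the group statement to its Lie-lattice analogue (Conjecture \ref{conjindexLA}) and then to search for an isomorphism invariant that reads off the index. For the reduction, I would pass from $G$ to its associated $\bb{Z}_p$-Lie lattice $L_G$ via Lazard's correspondence, at least in the saturable range (e.g.\ $\mr{dim}(G)<p$, torsion-free), where open subgroups $H\les G$ correspond to full-rank subalgebras $M\subseteq L_G$ with $[G:H]=[L_G:M]$, and where $H\simeq K$ forces $M\simeq N$ by functoriality of the correspondence. Just-infiniteness of $G$ should transfer to $L_G$, so in this range the group statement follows from the lattice statement. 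The fully general case (arbitrary $p$ and dimension) would require a version of the correspondence outside the saturable range, which I would have to treat separately.

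For the lattice statement, the template is Proposition \ref{pisotind}. The point there is that in dimension $3$ the abelianization index $[M:[M,M]]$ is an isomorphism invariant of $M$ and is tied to the ambient index by one multiplicative formula: if $U$ is the matrix of a basis of $M$ with respect to a basis of $L$, then Equation \ref{ebchbasis} gives the bracket matrix $B=\mr{det}(U)\,U^{-1}A(U^{-1})^T$ of $M$, whence $\mr{det}(B)=\mr{det}(U)\,\mr{det}(A)$; comparing $p$-valuations yields $[M:[M,M]]=[L:M]\cdot[L:[L,L]]$. Since $[L:[L,L]]$ is fixed while $[M:[M,M]]$ depends only on the isomorphism type of $M$, the index $[L:M]$ is forced. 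So the general plan is to produce a covolume-type invariant $\mr{cov}(M)\in p^{\bb{Z}}$ that is (i) an isomorphism invariant of the abstract Lie lattice $M$ and (ii) multiplicative, in the sense that $[L:M]=\mr{cov}(M)/\mr{cov}(L)$ for every full-rank subalgebra $M$; then $M\simeq N$ gives $\mr{cov}(M)=\mr{cov}(N)$ and hence $[L:M]=[L:N]$.

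The main obstacle is constructing such an invariant in dimension $n\neq 3$. The dimension-$3$ argument works precisely because the bracket is a map $\Lambda^2 L\to L$ between modules of equal rank $\binom{3}{2}=3=n$, so it literally has a determinant; this numerical coincidence fails for $n\neq 3$, and the abelianization alone no longer determines $[L:M]$. I would therefore look for a substitute built from the full algebra structure: natural candidates are the discriminant of the symmetric form $\kappa(x,y)=\mr{tr}(\mr{ad}(x)\,\mr{ad}(y))$, well defined up to squares and hence yielding a well-defined $p$-valuation invariant, or, more robustly, the covolume of $L$ inside $L\otimes_{\bb{Z}_p}\bb{Q}_p$ measured against a volume form on the $\bb{Q}_p$-algebra that is canonical up to a $\bb{Z}_p^*$-factor. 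The delicate points are that such forms can degenerate when $L$ is not semisimple, and that $\kappa$ computed inside $M$ is not the restriction of $\kappa$ computed inside $L$, so multiplicativity (ii) is not automatic. This is exactly where the just-infinite hypothesis ought to enter: it should supply enough rigidity (through higher-dimensional analogues of the structure in Proposition \ref{p3dimins}) to pin down a canonical nondegenerate invariant and to guarantee its multiplicativity. Establishing this is the heart of the conjecture, and I expect it to be the step where a genuinely new idea, beyond the dimension-$3$ determinant computation, is needed.
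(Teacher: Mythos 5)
The statement you were asked to prove is one of the paper's two open conjectures (Conjecture \ref{conjindexGP}); the paper contains no proof of it, only the three-dimensional unsolvable special case at the Lie-lattice level (Proposition \ref{pisotind}, proved via Lemma \ref{lindllmm}). So there is no argument of the authors to compare yours against, and your proposal must be judged as an attack on an open problem. Your first step is sound and is exactly the paper's own machinery: in the saturable range ($G$ torsion-free, $\mr{dim}(G)<p$), Theorem \ref{tGSK2} identifies open subgroups of $G$ with finite-index subalgebras of $L_G$ preserving indices; abstract isomorphisms of open subgroups are automatically continuous (Serre, \cite[Corollary 1.21]{DixAnaProP}) and hence transfer through Lazard's correspondence; and just-infiniteness of $G$ passes to $L_G$ because closed normal subgroups correspond to ideals. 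Your reconstruction of the dimension-3 case via $\det B=\det(U)\det(A)$ also agrees with the paper's proof of Proposition \ref{pisotind}.

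The surprising point is that you have underestimated your own second idea: the Killing-form invariant works, and both ``delicate points'' you raise against it evaporate. First, your claim that $\kappa$ computed inside $M$ is not the restriction of $\kappa$ computed inside $L$ is false. If $M\subseteq L$ has full rank, then $M$ and $L$ have the same $\bb{Q}_p$-span $V=L\otimes_{\bb{Z}_p}\bb{Q}_p$, and for $x,y\in M$ the endomorphism $\mr{ad}_M(x)\circ\mr{ad}_M(y)$ of $M$ is the restriction of $\mr{ad}_L(x)\circ\mr{ad}_L(y)$; since the trace of an endomorphism of a full-rank lattice equals the trace of its extension to $V$, we get $\kappa_M=\kappa_L|_{M\times M}$. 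Hence, if $U$ is the matrix of a basis of $M$ with respect to a basis of $L$, the Gram matrices satisfy $K_M=U^TK_LU$, so that $v_p(\det K_M)=2\,v_p(\det U)+v_p(\det K_L)$ while $[L:M]=p^{v_p(\det U)}$: multiplicativity is automatic. Second, degeneracy is excluded by the just-infinite hypothesis once $n=\mr{dim}(L)\ges 2$: a non-zero proper ideal $J$ of $V$ would meet $L$ in a non-zero ideal of dimension $\dim_{\bb{Q}_p}J<n$, contradicting just-infiniteness; so $V$ has no proper non-zero ideals and is not abelian, hence is simple, and Cartan's criterion makes $\kappa_L$ non-degenerate. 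Therefore $v_p(\det K_M)$ is a finite isomorphism invariant of the abstract lattice $M$ that determines $[L:M]$, and $M\simeq N$ forces $[L:M]=[L:N]$. This proves Conjecture \ref{conjindexLA} in every dimension $\ges 2$ and, through your reduction, Conjecture \ref{conjindexGP} for every $G$ with $2\les\mr{dim}(G)<p$ --- considerably more than the dimension-3 case established in the paper.

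Two caveats remain. In dimension 1 both conjectures are false as literally stated: $G=\bb{Z}_p$ is torsion-free, $p$-adic analytic and just infinite, yet its open subgroups $p\bb{Z}_p\simeq p^2\bb{Z}_p$ have different indices; so the abelian case is not merely where your invariant degenerates --- it is where the statement fails, and it must be excluded (unsolvability is clearly the intended hypothesis). And your concession about generality stands: for $\mr{dim}(G)\ges p$, or for $p=2$, a torsion-free $p$-adic analytic pro-$p$ group need not be saturable, the passage to $L_G$ is unavailable in the paper's framework, and in that range the group conjecture remains open even granting the lattice statement.
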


\vspace{0mm}
Now we construct some simple virtual endomorphisms.
\vspace{-2mm}

\begin{lemma} \label{li3dssst}
Let $L$ be a 3-dimensional $\bb{Z}_p$-Lie lattice whose matrix
with respect to some basis $(x_0,x_1,x_2)$ is 
\scalebox{0.8}{$
\left[
\begin{array}{rrr}
a  & 0 & 0\\
0 & 0 & b \\
0 & b & 0
\end{array}
\right]
$}
with  $a,b\in \bb{Z}_p$ and $a,b\neq 0$.
Then $L$ is self-similar of index $p$.
\end{lemma}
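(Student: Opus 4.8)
The plan is to exhibit an explicit simple virtual endomorphism of $L$ of index $p$ and then invoke Definition \ref{dselfsim}(4). Throughout I use the criterion furnished by Lemma \ref{linvtdinf}: any $\varphi$-invariant ideal of $L$ is contained in the set $D_\infty$ of Remark \ref{rdinfty}, which in the present situation is $D_\infty=\{v\in L:\Phi^n(v)\in L\text{ for all }n\}$, where $\Phi$ is the $\bb{Q}_p$-linear extension of $\varphi$; hence to prove simplicity it suffices to compute $D_\infty$ and check that it contains no nonzero ideal of $L$. By Lemma \ref{lantbrmat}(1) the given matrix encodes the brackets $[x_1,x_2]=a x_0$, $[x_2,x_0]=b x_2$ and $[x_0,x_1]=b x_1$ (so that $[x_0,x_2]=-b x_2$).

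First I treat the generic case $a\neq 0$. I set $M=\langle x_0,\, p x_1,\, x_2\rangle$; applying formula \ref{ebchbasis} of Lemma \ref{lantbrmat} with $U=\mr{diag}(1,p,1)$ shows that $M$ is a subalgebra of index $p$ (its matrix with respect to $(x_0,px_1,x_2)$ is integral). I define $\varphi:=\Phi|_M$, where $\Phi$ is the $\bb{Q}_p$-linear map with matrix $\mr{diag}(1,p^{-1},p)$ in the basis $\bm x$; the three identities $[\Phi x_i,\Phi x_j]=\Phi[x_i,x_j]$ are immediate and hold for every value of $a$, so $\Phi$ is a Lie automorphism of $L\otimes_{\bb{Z}_p}\bb{Q}_p$ and $\varphi:M\ra L$ is a virtual endomorphism of index $p$. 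Since $\Phi^n=\mr{diag}(1,p^{-n},p^n)$, requiring $\Phi^n(v)\in L$ for all $n$ forces the $x_1$-coordinate of $v$ to vanish, whence $D_\infty=\langle x_0,x_2\rangle$. If now $I\subseteq\langle x_0,x_2\rangle$ is a nonzero ideal and $w=\alpha x_0+\gamma x_2\in I$, then $[x_1,w]=-\alpha b x_1+\gamma a x_0\in I\subseteq\langle x_0,x_2\rangle$ forces $\alpha=0$ (as $b\neq 0$); taking then $w=\gamma x_2$ gives $[x_1,w]=\gamma a x_0\in I\subseteq\langle x_2\rangle$, which forces $\gamma a=0$, hence $\gamma=0$. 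Thus $D_\infty$ contains no nonzero ideal and $\varphi$ is simple.

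The main obstacle is the degenerate case $a=0$, where $L$ is solvable: here $\langle x_2\rangle$ is an ideal contained in the $D_\infty$ above and is $\varphi$-invariant, so the diagonal $\Phi$ fails to be simple and a different endomorphism is required. I replace it by the Weyl-twisted map $\Phi'$ determined by $\Phi'(x_0)=-x_0$, $\Phi'(x_1)=x_2$, $\Phi'(x_2)=p^{-1}x_1$; when $a=0$ the relation $[x_1,x_2]=0$ makes all three bracket checks go through, so $\Phi'$ is again a Lie morphism. Its domain is $M'=\langle x_0,\, x_1,\, p x_2\rangle$, a subalgebra of index $p$ by the computation used above, so $\varphi':=\Phi'|_{M'}$ is a virtual endomorphism of index $p$. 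Since $(\Phi')^2=\mr{diag}(1,p^{-1},p^{-1})$, requiring $(\Phi')^{n}(v)\in L$ for all $n$ forces both the $x_1$- and the $x_2$-coordinates of $v$ to vanish, so $D_\infty=\langle x_0\rangle$. Because $[x_0,x_1]=b x_1\neq 0$, the line $\langle x_0\rangle$ contains no nonzero ideal, and $\varphi'$ is simple.

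In either case $L$ admits a simple virtual endomorphism of index $p$, so $L$ is self-similar of index $p$. The only delicate point is the dichotomy at $a=0$: over $\bb{Q}_p$ the algebra changes from $sl_2$ (simple, which forces $\Phi$ to be an automorphism and makes the diagonal choice work) to a solvable algebra whose characteristic ideals obstruct every diagonal endomorphism, which is precisely why the twisted endomorphism must be used there.
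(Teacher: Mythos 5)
Your proof is correct, and in the main case $a\neq 0$ it is essentially the paper's own argument: the same subalgebra $M=\langle x_0,px_1,x_2\rangle$, the same virtual endomorphism (the restriction of $\mr{diag}(1,p^{-1},p)$), the same computation $D_\infty=\langle x_0,x_2\rangle$, and an equivalent verification that no non-zero ideal of $L$ lies inside $D_\infty$. The genuine difference is your separate treatment of $a=0$, and there your proposal does strictly more than the printed proof. The paper runs a single simplicity argument for all $a$: in its last case ($c_0=c_1=0$, $c_2\neq 0$) the exhibited witness is $w=[[z,x_1],x_1]=-c_2ab\,x_1$, which vanishes when $a=0$; and no alternative witness can exist there, because for $a=0$ the line $\langle x_2\rangle$ is a non-zero ideal of $L$ contained in $M$ with $\varphi(x_2)=px_2$, i.e.\ a $\varphi$-invariant ideal, so the paper's diagonal endomorphism is genuinely not simple in that degenerate (solvable) case. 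This gap is harmless for the paper's applications, which invoke the lemma only through unsolvable lattices where non-degeneracy of the matrix forces $a\neq 0$, but the lemma as stated does allow $a=0$, and your twisted endomorphism $\varphi'(x_0)=-x_0$, $\varphi'(x_1)=x_2$, $\varphi'(px_2)=x_1$ on $M'=\langle x_0,x_1,px_2\rangle$ is a correct repair: it is a morphism precisely because $[x_1,x_2]=0$ when $a=0$, it yields $D_\infty=\langle x_0\rangle$, and $[x_0,x_1]=bx_1\neq 0$ excludes any non-zero ideal from that line. One minor caveat: your description of $D_\infty$ as $\{v:\Phi^n(v)\in L\text{ for all }n\}$ is in general only an over-approximation of the set of Remark \ref{rdinfty}, whose defining condition is that all iterates remain in the domain ($M$, not merely $L$); since every $\varphi$-invariant ideal lies in the smaller set (Lemma \ref{linvtdinf}), ruling out ideals in the larger set is still sufficient, and in both of your cases the two sets in fact coincide, so the argument stands as written.
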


\begin{proof} 
Consider the subalgebra   $M: = \langle x_0, px_1 , x_2\rangle$ of $L$ of index $p$. Note that the module homomorphism $\varphi$ given by the
assignements $x_0\mapsto x_0$,
$px_1\mapsto x_1$ and $x_2\mapsto px_2$ is in fact
a homomorphism of algebras.
The domain of the $n$-th power of $\varphi$ is 
$D_n =\langle x_0, p^nx_1 , x_2\rangle$ for $n\ges 0$; hence
$D_\infty=\langle x_0, x_2\rangle$.
Let $I\neq\{0\}$ be an ideal of $L$, and let $0 \neq z = c_0x_0+c_1x_1+c_2x_2 \in I$.    We claim that there is an element $w\in I$ such that $w\not\in D_\infty$. Indeed, if $c_1\neq 0$, take $w = z$. If $c_0\neq 0$, take $w = [z,x_1]$.
Finally, if $c_2\neq 0$, take $w = [[z, x_1], x_1]$. Hence, $I$ is not
$\varphi$-invariant, so that $\varphi$  
is a simple virtual endomorphism of index $p$.

\end{proof}

\vspace{-2mm}
\begin{lemma} \label{li3dssst1}
Let $p\ges 3$ be a prime, and
let $L$ be a 3-dimensional $\bb{Z}_p$-Lie lattice. Suppose that the matrix
of $L$ with respect to some basis $(x_0,x_1,x_2)$ is 
\scalebox{0.8}{$
\left[
\begin{array}{rrr}
a  & 0 & 0\\
0 & p^s & 0 \\
0 & 0 & -p^s
\end{array}
\right]$}
with $a\in \bb{Z}_p$, $a\neq 0$, and $s\in\bb{N}$.
Then $L$ is self-similar of index $p$.
\end{lemma}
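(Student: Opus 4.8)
The plan is to follow the same template used in Lemma \ref{li3dssst}: exhibit an explicit index-$p$ subalgebra $M$ together with a virtual endomorphism $\varphi:M\rar L$, and then verify simplicity by computing the limiting domain $D_\infty$ and showing that no non-zero ideal can be contained in it. Since $p\ges 3$, the off-diagonal structure of the previous lemma is not available, but the matrix $A=\mr{diag}(a,p^s,-p^s)$ has two entries of equal valuation with a ratio $-1$ that is a unit (and, crucially, $-1$ is a square in $\bb{Z}_p$ only when we can extract a square root — here we instead exploit that we may rescale). First I would try to reduce this to Lemma \ref{li3dssst}: the block $\mr{diag}(p^s,-p^s)$ in the $x_1,x_2$ part should become the anti-diagonal block $\left[\begin{smallmatrix} 0 & b\\ b & 0\end{smallmatrix}\right]$ after a suitable change of basis of the span $\langle x_1,x_2\rangle$, using Equation \ref{ebchbasis} of Lemma \ref{lantbrmat}.

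Concretely, working inside the $2$-dimensional block and applying the congruence rule $B=\mr{det}(U)U^{-1}A(U^{-1})^T$ from Lemma \ref{lantbrmat}, I would look for $U\in GL_2(\bb{Z}_p)$ with $U^{T}\,\mr{diag}(p^s,-p^s)\,U$ proportional to $\left[\begin{smallmatrix} 0 & 1\\ 1 & 0\end{smallmatrix}\right]$. Over $\bb{Z}_p$ with $p\ges 3$ this amounts to finding a unit isotropic vector for the form $p^s(X^2-Y^2)$, and the obvious choice $X=Y=1$ works since $1^2-1^2=0$; taking a companion vector such as $(1,-1)$ one passes from the diagonal form to a hyperbolic (anti-diagonal) form, picking up a unit factor. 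The net effect is a new basis $(x_0,x_1',x_2')$ of $L$ (leaving $x_0$ untouched and keeping the $a$-entry, possibly rescaled by a unit) with respect to which the matrix of $L$ is exactly of the shape treated in Lemma \ref{li3dssst}, namely $\left[\begin{smallmatrix} a' & 0 & 0\\ 0 & 0 & b\\ 0 & b & 0\end{smallmatrix}\right]$ with $b\neq 0$. Having effected this change of basis, the conclusion that $L$ is self-similar of index $p$ follows immediately by invoking Lemma \ref{li3dssst}, with no further computation of virtual endomorphisms needed.

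The step I expect to be the main obstacle is the bookkeeping in the change of basis: I must confirm that the transforming matrix $U$ can be chosen in $GL_3(\bb{Z}_p)$ (not merely $GL_3(\bb{Q}_p)$) so that the resulting matrix $B$ genuinely has $\bb{Z}_p$-coefficients and the new off-diagonal entry $b$ lands in $\bb{Z}_p\setminus\{0\}$, and to track how the determinant factor $\mr{det}(U)$ and the unit rescaling affect the $(0,0)$-entry $a$. The requirement $p\ges 3$ enters precisely here, ensuring $2$ is a unit so that the diagonal-to-hyperbolic transformation is invertible over $\bb{Z}_p$; for $p=2$ the vector $(1,1)$ would fail to be part of a $\bb{Z}_2$-basis in the required way. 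Once the integrality of $U$ and of $B$ is checked, the rest is a direct appeal to the already-proved Lemma \ref{li3dssst}.
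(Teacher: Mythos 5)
Your proposal is correct and matches the paper's own proof: the paper performs exactly this diagonal-to-hyperbolic change of basis, using the matrix $U$ with $2\times 2$ block $\left[\begin{smallmatrix} 1 & 1\\ -1 & 1\end{smallmatrix}\right]$ (and first column $2x_0$, so that the new matrix is $\mathrm{diag}\bigl(a,\left[\begin{smallmatrix} 0 & 2p^s\\ 2p^s & 0\end{smallmatrix}\right]\bigr)$), then invokes Lemma \ref{li3dssst}. Your integrality concern is resolved just as you anticipated: the transforming matrix has determinant a unit precisely because $2\in\bb{Z}_p^*$ for $p\ges 3$, which is the only place the hypothesis on $p$ enters.
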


\begin{proof}
Since $p$ is odd, the formulas $y_0 = 2x_0$,
$y_1= x_1-x_2$ and $y_2=x_1+x_2$ define a new basis of $L$
whose matrix has the format of the one of Lemma \ref{li3dssst} 
with $b=2p^s$.
\end{proof}

\vspace{3mm}

We close the section by recording some results needed in the sequel. 
The proof of the lemma is a routine calculation which is left to the reader.

\begin{lemma}\label{lgenGnL}
Let $L$ be a 3-dimensional $\bb{Z}_p$-Lie lattice that admits a diagonalizing basis. 
Let $s_0, s_1, s_2\in\bb{N}\cup\{\infty\}$ be the $s$-invariants of $L$,
with $s_0\les s_1\les s_2$. The following holds.
\begin{enumerate}
\item 
Let
$(x_0,x_1,x_2)$
be a well diagonalizing basis of $L$.
Then 
the lower central series of $L$ is given by
$$ \gamma_n(L) = \langle  p^{s_i^{(n)}}x_i:\,i=0,1,2 \rangle\qquad\qquad
(n\ges 0),$$
where the exponents $s_i^{(n)}\in \bb{N}\cup\{\infty\}$ are given by
$s_0^{(0)} = s_1^{(0)} = s_2^{(0)} = 0$ and
$$
\left\{
\begin{array}{l}
s_0^{(2m+1)} = (m+1)s_0 + ms_1\\
s_1^{(2m+1)} = ms_0 + (m+1)s_1\\
s_2^{(2m+1)} = ms_0 + ms_1 + s_2
\end{array}
\right.
\quad m\ges 0;
\qquad 
\left\{
\begin{array}{l}
s_0^{(2m)} = ms_0 + ms_1\\
s_1^{(2m)} = ms_0+ ms_1\\
s_2^{(2m)} = ms_0 + (m-1)s_1 + s_2
\end{array}
\right.
\quad m\ges 1.
$$
\item $L$ is residually nilpotent if and only if $s_1\ges 1$.
\end{enumerate}
\end{lemma}

\begin{proposition}\label{pinsresnil2}
Let $L$ be a 3-dimensional $\bb{Z}_p$-Lie lattice that admits a diagonalizing basis. 
Let $s_0=0$, $s_1 =1$ and $s_2 =1$ be the $s$-invariants of $L$,
and suppose that $I\subseteq L$ is an ideal, $k\in\bb{N}$
and $I\not\subseteq \gamma_k(L)$. Then $\gamma_k(L)\subseteq I$.
\end{proposition}

\begin{proof}
Let the variables $u_0$, $u_1$ and $u_2$ stand for elements 
of $\bb{Z}_p^*$; their values may vary from one formula to another.
From the assumptions and Remarks \ref{rdiagbasis} and \ref{rsinv},
there exists a basis $(x_0,x_1,x_2)$ of $L$ such that
$[x_1, x_2] = u_0x_0$, $[x_2, x_0] = pu_1x_1$ and 
$[x_0, x_1] = pu_2x_2$. From 
$I\not\subseteq \gamma_k(L)$,
there exists $w = a_0x_0+a_1x_1+a_2x_2\in I$ 
such that $w\not\in \gamma_k(L)$.
We treat the case where $k=2m$ is even, 
$v_p(a_0)\les m-1$ and $v_p(a_1),v_p(a_2)\ges m$; 
the other cases are left to the reader.
From Lemma \ref{lgenGnL} it follows that
$\gamma_{2m}(L) = p^m L$, 
so that we only need to show that $p^mx_i\in I$ for $i\in\{0,1,2\}$.
Since $v_p(a_0)\les m-1$, it is enough to show that
$a_0px_i\in I$ for $i\in\{0,1,2\}$.
From
$[[w,x_1],x_0]= a_0p^2u_1x_1 \in I$
it follows that $p^{m+1}x_1\in I$,
so that $a_1px_1\in I$. From 
$[[w,x_2],x_2] = a_0pu_0x_0 + a_1pu_1x_1\in I$
it follows that $a_0px_0 \in I$, hence $a_1x_0,a_2x_0 \in I$.
Finally, from 
$[w,x_2] = a_1u_0x_0 + a_0pu_1x_1\in I$ and
$[w,x_1] = a_2u_0x_0 + a_0pu_2x_2\in I$,
it follows that $a_0px_1,a_0px_2\in I$.
\end{proof}

\vspace{-2mm}
\subsection{Classification of 3-dimensional unsolvable Lie lattices}\label{scla3dill}

We classify 3-dimensional unsolvable Lie lattices over $\bb{Z}_p$
for $p$ an odd prime.
The classification is given in terms of $3\times 3$ matrices as explained below.
Observe that the \textbf{canonical forms} that we will actually use in the paper are
given in {Remark \ref{rreci3d}}.

With any matrix $A\in gl_3(\bb{Z}_p)$,
we associate a 3-dimensional antisymmetric $\bb{Z}_p$-algebra 
$L_A$ as follows: the underlying lattice is $\bb{Z}_p^3$ 
endowed with the canonical basis;
the bracket is induced by the matrix $A$ as in Lemma \ref{lantbrmat}.
One can see that $L_A\simeq L_B$ if and only if 
$A$ is \textbf{multiplicatively congruent} to $B$
(denoted $A\sim_{MC} B$), namely, there exist 
$u\in\bb{Z}_p^*$ and $V\in GL_3(\bb{Z}_p)$ such that
$B = u V^T A V$.

\vspace{0mm}

\begin{theorem}\label{tclai3d}
Let $p\ges 3$ be a prime, and fix $\rho\in\bb{Z}_p^*$ 
not a square modulo $p$. Then the antisymmetric
$\bb{Z}_p$-algebras $L_A$ associated with the diagonal matrices listed below
in four families
constitute a complete and irredundant list of
3-dimensional unsolvable Lie lattices over $\bb{Z}_p$.
\begin{enumerate}
\item  
$A = \mr{diag}(p^{s_0}, \rho^{\vep_1}p^{s_1},
\rho^{\vep_2}p^{s_2})$ with $0\les s_0 < s_1 < s_2$ and $\vep_1,\vep_2\in\{0,1\}$.
\item  
$A = \mr{diag}(p^{s_0}, \rho^{\vep_1}p^{s_0},
p^{s_2})$ with $0\les s_0 < s_2$ and $\vep_1\in\{0,1\}$.
\item 
$A = \mr{diag}(p^{s_0}, p^{s_1},
\rho^{\vep_2}p^{s_1})$ with $0\les s_0 < s_1$ and $\vep_2\in\{0,1\}$. 
\item  
$A = \mr{diag}(p^{s_0}, p^{s_0},
p^{s_0})$ with $0\les s_0$.
\end{enumerate}
\end{theorem}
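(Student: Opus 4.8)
**The plan is to classify 3-dimensional unsolvable Lie lattices over $\bb{Z}_p$ ($p$ odd) by reducing the problem to a classification of symmetric non-degenerate $3\times 3$ matrices up to the equivalence induced by change of basis, and then invoke the structure theory of symmetric bilinear forms over $\bb{Z}_p$.** Let me sketch each step.

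By Proposition \ref{p3dimins}(1), an antisymmetric $\bb{Z}_p$-algebra $L$ of dimension 3 is an unsolvable Lie lattice if and only if its matrix (with respect to any basis) is symmetric and non-degenerate. So the classification of such $L$ up to isomorphism is the same as the classification of symmetric non-degenerate matrices $A\in gl_3(\bb{Z}_p)$ up to the equivalence dictated by change of basis. By Equation \eqref{ebchbasis} of Lemma \ref{lantbrmat}, a change of basis given by $U\in GL_3(\bb{Z}_p)$ transforms $A$ into $B = \mr{det}(U)\, U^{-1}A(U^{-1})^T$. First I would observe that for a genuine change of basis (so $U$ invertible over $\bb{Z}_p$, hence $\mr{det}(U)\in\bb{Z}_p^*$), setting $V = \mr{det}(U)\, U^{-1}$ one has $V\in GL_3(\bb{Z}_p)$ as well, and $B = V A V^T$ up to a unit scalar coming from $\mr{det}(U)$; more precisely $B = \mr{det}(U)\,U^{-1}A(U^{-1})^T$, so the relevant equivalence is: $A\sim B$ iff $B = \lambda\, W A W^T$ for some $W\in GL_3(\bb{Z}_p)$ and $\lambda\in\bb{Z}_p^*$ (the scalar $\lambda$ records the freedom of rescaling the bracket, consistent with Remark \ref{rdiagbasis}). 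Thus the task is to classify symmetric non-degenerate $A$ up to $\bb{Z}_p$-congruence together with scaling by a unit.

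The analytic heart of the argument is the diagonalization result invoked in Proposition \ref{p3dimins}(2)(a): for $p\ges 3$, every symmetric $A\in gl_3(\bb{Z}_p)$ is congruent over $\bb{Z}_p$ to a diagonal matrix (see \cite{SerArithmetic}). So every such $L$ admits a well diagonalizing basis, and I may assume $A = \mr{diag}(a_0,a_1,a_2)$ with $v_p(a_0)\les v_p(a_1)\les v_p(a_2)$; non-degeneracy forces all $a_i\neq 0$, so writing $a_i = u_i p^{s_i}$ with $u_i\in\bb{Z}_p^*$ gives the $s$-invariants $s_0\les s_1\les s_2$ of Remark \ref{rsinv}. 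The next step is to normalize the units $u_i$. Since $p$ is odd, $\bb{Z}_p^*/(\bb{Z}_p^*)^2$ has order $2$, represented by $\{1,\rho\}$; a diagonal congruence by $\mr{diag}(t_0,t_1,t_2)$ rescales $a_i\mapsto t_i^2 a_i$, so each $u_i$ can be reduced modulo squares to $\{1,\rho\}$. Then I exploit the global unit scaling $\lambda$: multiplying all three entries by a common unit lets me normalize one of them, which is how the entry $p^{s_0}$ (the one of least valuation) is always arranged to have unit part $1$ and accounts for why only $\vep_1,\vep_2$ appear in Family (1). Permutations of equal-valuation blocks (Remark \ref{rdiagbasis}) then collapse further redundancies, producing the four families according to the coincidence pattern among $s_0,s_1,s_2$: all distinct (Family 1), $s_0=s_1<s_2$ (Family 2), $s_0<s_1=s_2$ (Family 3), and $s_0=s_1=s_2$ (Family 4).

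The main obstacle, and the part requiring genuine care, is proving \emph{irredundancy}, i.e.\ that the listed forms are pairwise non-isomorphic, and that no further collapses occur within a family. The $s$-invariants $(s_0,s_1,s_2)$ are isomorphism invariants (Remark \ref{rsinv}), so they separate forms with different valuation patterns; the delicate point is distinguishing forms with the \emph{same} $s$-invariants but different $\vep$'s. For this I would compute a discriminant-type invariant: the determinant $\det(A) = u_0u_1u_2\, p^{s_0+s_1+s_2}$, whose unit part modulo squares (i.e.\ the class of $u_0u_1u_2$ in $\bb{Z}_p^*/(\bb{Z}_p^*)^2$) is invariant under congruence, but transforms by $\lambda^3$ under the global scaling — and since $p$ is odd, $\lambda^3$ ranges over all of $\bb{Z}_p^*/(\bb{Z}_p^*)^2$, so the raw discriminant alone is \emph{not} a scaling invariant. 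The subtle work is to track exactly which unit-data survive after quotienting by squares, permutations within equal-valuation blocks, and the cube-of-scalar action; this is precisely the case analysis that forces the constraints $0\les s_0<s_1<s_2$ etc.\ and the specific placement of $\rho^{\vep_1},\rho^{\vep_2}$ in each family. I expect the cleanest route is to fix, in each family, a complete set of scaling-and-congruence invariants — the ordered $s$-invariants together with the square-class of an appropriately normalized product of the residual units — and verify these invariants take distinct values on distinct list entries. The canonical forms promised in Remark \ref{rreci3d} presumably record the outcome of this normalization.
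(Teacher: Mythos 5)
Your reduction is the same as the paper's: by Proposition \ref{p3dimins} and Lemmas \ref{lantbrmat}, \ref{llaisolb}, \ref{lLeqMC}, classifying these lattices amounts to classifying symmetric non-degenerate matrices up to the multiplicative congruence $B=\lambda\, W A W^T$ with $W\in GL_3(\bb{Z}_p)$, $\lambda\in\bb{Z}_p^*$, and diagonalization for $p$ odd reduces everything to diagonal matrices. The gap is in the normalization after diagonalizing. The only moves you actually put on the table are: absorbing squares by diagonal congruences, permuting entries of equal valuation (Remark \ref{rdiagbasis}), and the global unit scaling $\lambda$. Under these moves, the only thing that can happen to the square-classes of the diagonal entries is a permutation within equal-valuation blocks or a simultaneous multiplication of all of them by $\rho$; in particular these moves can never turn $\mr{diag}(p^{s_0},p^{s_0},\rho p^{s_2})$ into $\mr{diag}(p^{s_0},p^{s_0},p^{s_2})$ or into $\mr{diag}(p^{s_0},\rho p^{s_0},p^{s_2})$, nor $\mr{diag}(p^{s_0},p^{s_0},\rho p^{s_0})$ into $\mr{diag}(p^{s_0},p^{s_0},p^{s_0})$. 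Carried out as written, your procedure yields \emph{three} classes for each of the valuation patterns $s_0=s_1<s_2$ and $s_0<s_1=s_2$, and \emph{two} for $s_0=s_1=s_2$, instead of the $2$, $2$, $1$ classes of the statement: completeness fails, and the collapses you attribute to permutations are not achievable by permutations.

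The missing ingredient is Remark \ref{rcassels} (Cassels' Lemma 3.4): when two diagonal entries have equal $p$-adic valuation, a unit factor can be transferred from one entry to the other by a genuine (non-diagonal, non-permutation) congruence over $\bb{Z}_p$; equivalently, congruence classes of non-degenerate forms over $\bb{Z}_p$, $p$ odd, are determined by the rank and the discriminant of each equal-valuation block, not by the individual square-classes of the entries. This is exactly what lets the paper use the multiplicative freedom (multiply by $\rho$) and then ``discharge'' the resulting factor $\rho$ onto an entry of equal valuation, where it becomes a square $\rho^2$ and disappears; for instance
$\mr{diag}(p^{s_0},p^{s_0},\rho p^{s_2})\sim_{MC}\mr{diag}(\rho p^{s_0},\rho p^{s_0},p^{s_2})\sim_C\mr{diag}(p^{s_0},\rho^2 p^{s_0},p^{s_2})\sim_C\mr{diag}(p^{s_0},p^{s_0},p^{s_2})$.
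The same point undermines your irredundancy plan: the multiset of square-classes of the residual units is \emph{not} a congruence invariant (only block ranks and block discriminants are), so to know which ``normalized products of units'' are complete invariants you must in any case quote the classification of quadratic forms over $\bb{Z}_p$. The paper does this by citing \cite[Theorem 3.1, page 115]{CasRQF}, taking its list of congruence representatives (which carries one extra factor $\rho^{\vep}$ compared to the four families), removing that factor by scaling-plus-discharge for completeness, and proving irredundancy by noting that $B\sim_{MC}A$ forces $B\sim_C A$ or $B\sim_C\rho A$, the latter being impossible because the congruence representative of $\rho A$ lies outside the chosen list.
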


\begin{proof}
We denote by $\Omega$ the set of matrices listed in the statement.
Observe that if $A,B\in gl_3(\bb{Z}_p)$,
$A\sim_{MC} B$ and $A$ is symmetric and non-degenerate, then
$B$ shares the same properties.
By Proposition \ref{p3dimins} and the observations preceding the statement of the theorem,
it suffices to show that $\Omega$ is a set of representatives for the symmetric
non-degenerate matrices of $gl_3(\bb{Z}_p)$ modulo
the relation $\sim_{MC}$ of multiplicative congruence. 
We write $A \sim_C B$ for the congruence relation, that is,
$B = V^TA V$ for some $V\in GL_3(\bb{Z}_p)$. Of course,
$A\sim_C B$ implies $A\sim_{MC} B$. The congruence classes
of symmetric non-degenerate matrices (i.e., non-degenerate 
quadratic forms)
are classified (see \cite[Theorem 3.1, page 115]{CasRQF}).
We can write the congruence classes in the form:
\begin{enumerate}
\item 
$A = \mr{diag}(\rho^{\vep_0}p^{s_0}, \rho^{\vep_1}p^{s_1},
\rho^{\vep_2}p^{s_2})$ with $0\les s_0 < s_1 < s_2$
and $\vep_0,\vep_1,\vep_2\in\{0,1\}$.
\item 
$A = \mr{diag}(p^{s_0}, \rho^{\vep_1}p^{s_0},
\rho^{\vep_2}p^{s_2})$ with $0\les s_0  < s_2$ and $\vep_1,\vep_2\in\{0,1\}$.
\item  
$A = \mr{diag}(\rho^{\vep_0}p^{s_0}, p^{s_1},
\rho^{\vep_2}p^{s_1})$ with $0\les s_0 < s_1$ and $\vep_0,\vep_2\in\{0,1\}$.
\item 
$A = \mr{diag}(p^{s_0}, p^{s_0},
\rho^{\vep_2}p^{s_0})$ with $0\les s_0$ and $\vep_2\in\{0,1\}$.
\end{enumerate}
In some sense, there are twice as many congruence classes as 
elements of $\Omega$: in the list of the congruence classes compared to the list of elements of $\Omega$, 
there is an extra factor $\rho^{\vep_i}$. We proceed as follows.

First of all, we show that any symmetric non-degenerate $B$ 
is multiplicatively congruent 
to some $A\in \Omega$.
Indeed, some congruence puts $B$ in one of the forms given above, and we
complete the argument for the second family,
namely, we assume that $B$ is congruent to 
$C = \mr{diag}(p^{s_0}, \rho^{\vep_1}p^{s_0},
\rho^{\vep_2}p^{s_2})$; the argument for the other families 
is similar. If $\vep_2 = 0$ we are done.
If $\vep_2 = 1$ then we multiply $C$ by $\rho$, 
getting
$B\sim_{MC}\mr{diag}(\rho p^{s_0}, \rho\rho^{\vep_1}p^{s_0},\rho^2 p^{s_2})$.
Applying \cite[Lemma 3.4, page 115]{CasRQF}, we ``discharge'' the factor
$\rho$ in the first diagonal entry to the second, which is done through
a congruence, so that 
$B\sim_{MC}\mr{diag}(p^{s_0},\rho^2\rho^{\vep_1}p^{s_0},\rho^2 p^{s_2})$.
Another (obvious) congruence eliminates the factors $\rho^2$ 
on the second and third diagonal entries,
getting one of the forms in the statement, namely,
$B\sim_{MC}\mr{diag}(p^{s_0}, \rho^{\vep_1}p^{s_0},p^{s_2})$.

The last thing to be proven is that no two distinct matrices
in $\Omega$ are multiplicatively congruent to each other. Formally, take $A,B\in \Omega$
and assume that they are multiplicatively congruent. We have to show that $A= B$.
First of all, observe that
no two distinct elements of $\Omega$ are congruent.
In other words, if we show that $A\sim_C B$ then we are done.
By assumption, there exist $u\in \bb{Z}_p^*$ and $V\in GL_3(\bb{Z}_p)$
such that $B = u V^TA V$. Hence, $B \sim_C uA$.
If $u$ is a square then $uA\sim_C A$ and we are done.
We now show that the other possibility, namely, that $u$ is not a square,
leads to a contradiction. Since $u=\rho v^2$
for some $v\in \bb{Z}_p^*$,
we see that $uA\sim_C \rho A$. Summarizing, $B\sim_C \rho A$,
where $A, B \in \Omega$.
Again, we should analyze four cases, depending on which family $A$ belongs to.
We will do the case of the second family, the others being similar.
The matrix $\rho A$ is 
$\mr{diag}(\rho p^{s_0}, \rho\rho^{\vep_1}p^{s_0},\rho p^{s_2})$,
whose congruence class is shown to be represented by 
$\mr{diag}(p^{s_0}, \rho^{\vep_1}p^{s_0},\rho p^{s_2})$
after discharging the first $\rho$ as above.
Hence, $B = \mr{diag}(p^{s_0}, \rho^{\vep_1}p^{s_0},\rho p^{s_2})$,
but this is a contradiction, since $ \mr{diag}(p^{s_0}, \rho^{\vep_1}p^{s_0},\rho p^{s_2})$  is not an element of $\Omega$.
\end{proof}

\begin{remark} \label{rreci3d}
The classification given in Theorem \ref{tclai3d} is a natural one
if one takes the classification of quadratic forms as a starting point.
On the other hand, in order to formulate the main theorem of
this section (Theorem
\ref{tsoluns}) in a more ``uniform'' way, we change some of the 
representative matrices as follows.
We recall that $\rho \in \bb{Z}_p^*$ is a fixed non-square modulo $p$,
and that $p$ is assumed to be odd.
The ``new'' matrices are:
\begin{enumerate}
\item 
$A = \mr{diag}(p^{s_0}, \rho^{\vep_1}p^{s_1},
\rho^{\vep_2}p^{s_2})$ with $0\les s_0 < s_1 < s_2$ and $\vep_1,\vep_2\in\{0,1\}$.
\item 
$A = \mr{diag}(p^{s_0}, -\rho^{\vep_1}p^{s_0},
p^{s_2})$ with $0\les s_0  < s_2$ and $\vep_1\in\{0,1\}$. 
\item  
$A = \mr{diag}(p^{s_0}, p^{s_1},
-\rho^{\vep_2}p^{s_1})$ with $0\les s_0 < s_1$ and $\vep_2\in\{0,1\}$.
\item 
$A = \mr{diag}(p^{s_0}, p^{s_0},
p^{s_0})$ with $0\les s_0$.
\end{enumerate}
Observe that we have just inserted two `minus' signs, one in family (2) and
the other in family (3).
We explain which kind of replacement has been performed by
treating family (2); the argument for family (3) is completely analogous. 
Recall that $-1$ is a square in $\bb{Z}_p^*$ if and only if $p\equiv 1$ modulo 4.
Below, on the left we write the representative with respect to the 
original classification,  while on the right
we write the corresponding representative in the new classification,
depending on the residue class of $p$ modulo 4. We have:
$$
\mr{diag}(p^{s_0}, p^{s_0},
p^{s_2}) \sim_{MC} 
\left\{
\begin{array}{ll}
\mr{diag}(p^{s_0}, -p^{s_0},
p^{s_2}) & \mbox{if }p\equiv 1\mbox{ modulo 4} \\[10pt]
\mr{diag}(p^{s_0}, -\rho p^{s_0},
p^{s_2}) & \mbox{if }p\equiv 3\mbox{ modulo 4} \\
\end{array}
\right.
$$
and
$$
\mr{diag}(p^{s_0}, \rho p^{s_0},
p^{s_2}) \sim_{MC}
\left\{
\begin{array}{ll}
\mr{diag}(p^{s_0}, -\rho p^{s_0},
p^{s_2}) & \mbox{if }p\equiv 1\mbox{ modulo 4} \\[10pt]
\mr{diag}(p^{s_0}, -p^{s_0},
p^{s_2}) & \mbox{if }p\equiv 3\mbox{ modulo 4}. \\
\end{array}
\right.
$$
As a matter of terminology, each of the matrices in the  enumeration above
will be called the \textbf{canonical matrix} of the corresponding 
isomorphism class of $\bb{Z}_p$-Lie lattices.
A basis of a lattice $L$ whose associated matrix is canonical
will be called a \textbf{canonical basis} of $L$, and the corresponding 
presentation (i.e., the commutation relations of the basis elements)
will be called the \textbf{canonical presentation} of $L$.
We will also use the terminology \textbf{canonical form} in order to refer
generically to the canonical matrix or to the canonical presentation.
\end{remark}

\begin{example}\label{ecansl}
We give the canonical matrix
of some relevant $\bb{Z}_p$-Lie lattices, where $p$ is assumed to be odd. 
Before doing that, we recall some definitions  from 
\cite[Section 2]{KloSL1}. Choose $\rho \in \{ 1, 2, ..., p-1\}$ not a square modulo $p$ and let 
$$\bb{D}_p = \bb{Q}_p + \bb{Q}_p\mb{u} + \bb{Q}_p\mb{v} + \bb{Q}_p \mb{uv}$$ 
 be the quaternion algebra defined by the multiplication rules 
$$\mb{u}^2 = \rho,\quad \mb{v}^2 = p, \quad\mb{uv} = -\mb{vu};$$
then $\bb{D}_p$ is a central division algebra of index 2 over $\bb{Q}_p$. For an element $\mb{z} = \alpha + \beta \mb{u} +\gamma \mb{v} + \delta \mb{uv} \in \bb{D}_p$, the reduced norm and reduced trace are given by 
$$ \textrm{N}(\mb{z}) =    \alpha^2 -  \rho \beta^2 - (\gamma^2 - \rho \delta^2)p  \quad\textrm{ and } \quad  
\textrm{T}(\mb{z}) = 2\alpha.$$  The ring of integers ${\Delta}_p =  \bb{Z}_p + \bb{Z}_p\mb{u} + \bb{Z}_p\mb{v} + \bb{Z}_p \mb{uv}$ of  $\bb{D}_p$ forms the unique maximal  $\bb{Z}_p$-order in $\bb{D}_p$, and  $\mathfrak{P} = \mb{v}{\Delta}_p$ is the maximal ideal of ${\Delta}_p$.  Let $SL_1(\bb{D}_p)$ be the set of elements of reduced norm 1 
in $\bb{D}_p$; this is a 3-dimensional compact $p$-adic analytic group. Note that  $SL_1(\bb{D}_p)$  coincides with the group $SL_1({\Delta}_p) = SL_1(\bb{D}_p)  \cap {\Delta}_p$.   For any integer $m\ges 1$, let 
$SL_1^m({\Delta}_p) = SL_1({\Delta}_p) \cap (1+ {\mathfrak{P}}^m)$ be the $m$-th congruence subgroup of $SL_1({\Delta}_p)$.  Similarly,  let $sl_1(\bb{D}_p)$ denote the set of elements of reduced trace zero in $\bb{D}_p$. Then  $sl_1({\Delta}_p) = sl_1(\bb{D}_p)  \cap {\Delta}_p$, and $sl_1^m({\Delta}_p) = sl_1({\Delta}_p) \cap {\mathfrak{P}}^m$ is the $m$-th congruence subalgebra of $sl_1({\Delta}_p)$.

On the right column of the list below, where $k\in \bb{N}$, 
$sl_2^k(\bb{Z}_p) = p^k sl_2(\bb{Z}_p)$ and $sl_1^0(\Delta_p)=sl_1(\Delta_p)$,
we display the canonical matrix of the Lie lattices appearing on the left.
\vspace{0mm}
$$
\begin{array}{rll}
\mbox{Lattice}
&&
\mbox{Matrix}
\\[5pt]
sl_2(\bb{Z}_p)
&-&
\mr{diag}(1,1,1)
\\[3pt]
sl_2^k(\bb{Z}_p)
&-&
\mr{diag}(p^k,p^k,p^k)
\\[3pt]
sl_2^{\sylow}(\bb{Z}_p)
&-&
\mr{diag}(1,p,-p)
\\[3pt]
\gamma_{2k}(sl_2^{\sylow}(\bb{Z}_p))
&-&
\mr{diag}(p^k,p^{k+1},-p^{k+1})
\\[3pt]
\gamma_{2k+1}(sl_2^{\sylow}(\bb{Z}_p))
&-&
\mr{diag}(p^{k+1},-p^{k+1},p^{k+2})
\\[3pt]
sl_1(\Delta_p)
&-&
\mr{diag}(1,-\rho,p)
\\[3pt]
sl_1^{2k}(\Delta_p)
&-&
\mr{diag}(p^k,-\rho p^k, p^{k+1})
\\[3pt]
sl_1^{2k+1}(\Delta_p)
&-&
\mr{diag}(p^k,p^{k+1},-\rho p^{k+1}).
\end{array}
$$
We prove the claims given in the list. Let $(x_0, x_1, x_2)$
be the basis
of $sl_2(\bb{Z}_p)$
given by
$
x_0 =
\begingroup 
\scriptsize
\left[
\begin{array}{rr}
1 & 0\\
0 & -1
\end{array}
\right]
\endgroup
$,
$x_1 =
\begingroup 
\scriptsize
\left[
\begin{array}{rr}
0 & 1\\
0 & 0
\end{array}
\right]
\endgroup
$ and
$x_2 =
\begingroup 
\scriptsize
\left[
\begin{array}{rr}
0 & 0\\
1 & 0
\end{array}
\right]
\endgroup
$.
The matrix of $sl_2(\bb{Z}_p)$ with respect to the
new basis
$(2^{-1}x_0, 2^{-1}(x_1+x_2), 2^{-1}(-x_1+x_2))$ is 
$\mr{diag}(1,1,-1)$, from which it follows that the $s$-invariants 
are $s_0=s_1=s_2=0$, so that the matrix of $sl_2(\bb{Z}_p)$
with respect to some basis $(y_0, y_1, y_2)$ is the identity
(see Remark \ref{rreci3d}). Since $sl_2^k(\bb{Z}_p) = p^k sl_2(\bb{Z}_p)$,
the matrix of $sl_2^k(\bb{Z}_p)$ with respect to the basis
$(p^ky_0, p^k y_1, p^k y_2)$ is $\mr{diag}(p^k,p^k,p^k)$.

Regarding $sl_2^{\sylow}(\bb{Z}_p)$, its
matrix with respect to the basis
$(z_0, z_1, z_2)=(2^{-1}px_0, 2^{-1}(x_1+px_2), 2^{-1}(-x_1+px_2))$
is $\mr{diag}(1,p,-p)$, as desired.
From Lemma \ref{lgenGnL}, we have:
$$
\left\{
\begin{array}{rcl}
\gamma_{2k}(L) &=& 
\langle  p^{k}z_0, p^k z_1, p^k z_2\rangle\\[5pt]
\gamma_{2k+1}(L) &=& 
\langle  p^{k}z_0, p^{k+1} z_1, p^{k+1} z_2\rangle.
\end{array}
\right.
$$
One computes that
the matrix of $\gamma_{2k}(L)$ with respect to the displayed basis
is the desired canonical form, while the matrix  
of $\gamma_{2k+1}(L)$ is
$\mr{diag}(p^{k+2}, p^{k+1}, -p^{k+1})$, which can be easily put 
into the desired canonical form (see Remark \ref{rdiagbasis}).

Finally, we consider $sl_1(\Delta_p)$ and its congruence subalgebras.
The ordered set  
$({x_0},{x_1},x_2) = (2^{-1}\mb{uv}, 2^{-1}\mb{v},2^{-1}\mb{u})$
is a basis of $sl_1(\Delta_p)$ over $\bb{Z}_p$, 
and the commutation relations are given by
$$
[x_1, x_2] = -x_0,\quad
[x_2, x_0] = \rho x_1, \quad
[x_0, x_1] = px_2.   $$ 
More generally,
for all $k\ges 0$, $(p^kx_0, p^kx_1, p^kx_2)$
is a basis of $sl_1^{2k}(\Delta_p)$, while 
$(p^kx_0, p^kx_1, p^{k+1}x_2)$
is a basis of $sl_1^{2k+1}(\Delta_p)$
(see \cite{KloSL1}). 
One may take these bases and apply 
\cite[Lemma 3.4, page 115]{CasRQF} to compute the desired canonical matrix
of the respective lattices.
\end{example}


\subsection{Non-self-similarity theorem}\label{snsscases}

\begin{definition}\label{dnonselfsc2}
Let $L$ be a 3-dimensional antisymmetric $\bb{Z}_p$-algebra, and let $\bm{x}$ be
a basis of $L$. Denote by $A$ the matrix of $L$ with respect to $\bm{x}$.
We say that 
$\bm{x}$
satisfies the \textbf{non-self-similarity condition}
if and only if $\bm{x}$ is well diagonalizing and, 
denoting $A=\mr{diag}(a_0,a_1,a_2)$, $Z_0 = \{0,...,p-1\}$ and $Z_1=\{1,...,p-1\}$,
the following conditions are satisfied:
 \begin{enumerate}
 \item $v_p(e^2a_0 +a_1) = v_p(a_0)$, for all $e\in Z_1$.
 \item $v_p(e^2a_0 +f^2 a_1 + a_2) = v_p(a_0)$, for all $e\in Z_1$ and $f\in Z_0$.
 \item $v_p(f^2a_1 +a_2) = v_p(a_1)$, for all $f\in Z_1$.
 \end{enumerate}
\end{definition}

\begin{theorem}\label{tsl11noss}
Let $p\ges 3$ be a prime, and let $L$ be a
3-dimensional unsolvable $\bb{Z}_p$-Lie lattice. Suppose that
there exists a basis of $L$ that
satisfies the non-self-similarity condition,
and that the $s$-invariants $s_0\les s_1\les s_2$ of $L$ 
are not all equal (i.e., $s_0<s_2$).
Then $L$ is not self-similar of index $p$.
\end{theorem}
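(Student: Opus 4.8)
The plan is to prove the contrapositive via the domain-stabilization machinery already set up in the paper: a virtual endomorphism $\varphi:M\rar L$ of index $p$ is simple if and only if the ``stable domain'' $D_\infty$ (the intersection of the domains of all iterates of $\varphi$, cf. Remark \ref{rdinfty}) contains no non-trivial ideal of $L$, and more precisely the obstruction to simplicity is that every non-zero ideal must avoid $D_\infty$ (Lemma \ref{linvtdinf}). So to show $L$ is \emph{not} self-similar of index $p$, I would take an arbitrary virtual endomorphism $\varphi$ of index $p$ and produce a non-trivial $\varphi$-invariant ideal. The \textbf{non-self-similarity condition} of Definition \ref{dnonselfsc2} is presumably tailored exactly so that, with respect to the distinguished basis, any index-$p$ subalgebra $M$ and any algebra morphism $\varphi:M\rar L$ are forced into a rigid form; the first step is therefore to unpack that condition and translate it, via Lemma \ref{lantbrmat} and Equation \ref{ebchbasis}, into constraints on the matrix $U$ of a basis of $M$ and on the matrix representing $\varphi$.

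The key structural input is Proposition \ref{p3dimins}: since $p\ges 3$ and $L$ is $3$-dimensional unsolvable, $L$ is hereditarily just infinite, so \emph{every} $3$-dimensional subalgebra $M$ is just infinite and hence has no proper non-zero ideals of its own; moreover the matrix of $L$ is symmetric and non-degenerate. This rigidity is what makes the index-$p$ case tractable. The second step is to enumerate, up to the relevant equivalence, the index-$p$ subalgebras $M\les L$ compatible with the given basis: an index-$p$ subalgebra corresponds to a matrix $U$ with $\det U = p$ (up to units), and the subalgebra condition forces $B=\det(U)U^{-1}A(U^{-1})^T$ to be integral. Using that the $s$-invariants satisfy $s_0\les s_1\les s_2$ with $s_0<s_2$, I would argue that the valuation structure of the diagonal matrix $A$ severely restricts which directions can be ``divided by $p$'' while keeping integrality of the bracket, pinning down $M$ up to finitely many normalized shapes.

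Having fixed the shape of $M$, the third step is to analyze an arbitrary morphism $\varphi:M\rar L$. Writing $\varphi$ as a matrix with respect to a basis of $M$ and the basis of $L$, the requirement that $\varphi$ preserve the (antisymmetric, in fact Lie) bracket becomes a system of quadratic equations relating $\varphi$ to $A$ and $U$. Here the non-self-similarity condition should guarantee that one coordinate direction, say $\langle x_0\rangle$ or more generally the span associated with the smallest $s$-invariant $s_0$, is both contained in $M$ and mapped by $\varphi$ back into a proper subspace that lies inside $D_\infty$. Concretely, I would show that the eigen-direction forced by the extremal valuation $s_0<s_2$ generates a non-trivial ideal $I$ of $L$ (it is an ideal because the bracket with any element lands back in $I$, using that $A$ is diagonal and the multiplication table respects the valuation filtration) and that this $I$ satisfies $I\subseteq M$ and $\varphi(I)\subseteq I$, i.e. $I$ is $\varphi$-invariant. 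By Lemma \ref{linvtdinf} this contradicts simplicity.

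The main obstacle I anticipate is the third step: controlling \emph{all} index-$p$ morphisms simultaneously rather than a single convenient one. Because $L$ is just infinite, it has no obvious global invariant ideal to exploit; the invariant ideal must be manufactured from the interplay between the domain $M$ and the map $\varphi$, and it may only be $\varphi$-invariant rather than $L$-invariant in a naive sense, so I must verify both $I\subseteq M$ (a domain condition coming from the shape of $U$) and $\varphi(I)\subseteq I$ (coming from the bracket-preservation equations) at once. The condition $s_0<s_2$ is exactly what breaks the symmetry that $sl_2$-type lattices enjoy, and the delicate point will be checking that the non-self-similarity condition of Definition \ref{dnonselfsc2} persists under the passage from $L$ to $M$ and under iteration of $\varphi$, so that the constructed ideal survives. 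I expect the proof to reduce, after the preparatory lemmas, to a short valuation computation showing that the $s_0$-direction cannot be ``escaped'' by any integral bracket-preserving $\varphi$ of index $p$.
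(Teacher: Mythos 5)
Your overall frame (take an arbitrary index-$p$ virtual endomorphism $\varphi:M\rar L$ and exhibit a non-trivial $\varphi$-invariant ideal) is correct, and your second step (parametrizing index-$p$ subalgebras by matrices $U$ with $v_p(\det U)=1$) matches the paper's Lemma \ref{lclaipsmod}. But the heart of your third step fails, and it is exactly where the paper's key idea lives. You propose to show that the ``eigen-direction'' attached to the extremal valuation generates a non-trivial ideal $I$, justified by the parenthetical claim that the bracket with any element lands back in $I$ because $A$ is diagonal. In an unsolvable $3$-dimensional Lie lattice no $1$- or $2$-dimensional submodule is ever an ideal: by Proposition \ref{p3dimins} the lattice is hereditarily just infinite, so every non-zero ideal is $3$-dimensional (already $[x_0,x_1]=a_2x_2$ with $a_2\neq 0$ throws you out of $\langle x_0\rangle$), and the genuine ideal generated by $x_0$ comes with no reason whatsoever to satisfy $I\subseteq M$ or $\varphi(I)\subseteq I$. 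Two related misreadings compound this: just-infiniteness of $M$ does not mean $M$ ``has no proper non-zero ideals of its own'' ($pM$ is one; it means every non-zero ideal is full-dimensional), and your opening ``simple if and only if $D_\infty$ contains no non-trivial ideal'' is only an implication — Lemma \ref{linvtdinf} gives invariant $\Rightarrow$ contained in $D_\infty$, which is how the paper proves \emph{simplicity} elsewhere, but it cannot produce the invariant ideal needed here.

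What is missing is the paper's isomorphism-transport mechanism. One first reduces to $\varphi$ injective (hereditary just-infiniteness plus Lemma \ref{lsimtinj}); then $M':=\varphi(M)$ is a subalgebra isomorphic to $M$, hence of index $p$ as well by Proposition \ref{pisotind}. Both $M=L^\xi$ and $M'=L^\eta$ carry a type $i,j\in\{0,1,2\}$ from the classification, and the key Lemma \ref{lfrgen} shows that the ideal $I=p[L,L]+p^{s_i}L$ is \emph{intrinsic} to $M$, via $[M,M]+p^{s_i}M=p[L,L]+p^{s_i}L$. Since the left-hand side is computed entirely inside $M$ (respectively inside $M'$), the isomorphism $\varphi:M\rar M'$ automatically carries it to the corresponding ideal for $M'$; whenever the relevant $s$-invariants of the two types agree, this yields $\varphi(I)=I$ with $I\subseteq M$, i.e.\ non-simplicity. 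The proof then splits into cases: $i=j$ (immediate); $\{i,j\}=\{0,1\}$ or $\{1,2\}$, where one shows $s_i=s_j$ by comparing the $s$-invariants of $M\simeq M'$ computed in Lemma \ref{lxi012}; and $\{i,j\}=\{0,2\}$, which is impossible precisely because $s_0<s_2$ forces the $s$-invariants of $M$ and $M'$ to differ — this is the only place the hypothesis $s_0<s_2$ is used, not as a valuation-theoretic ``symmetry breaking.'' Without the intrinsic-ideal lemma and this case analysis, your plan has no mechanism forcing a single ideal to satisfy $I\subseteq M$ and $\varphi(I)\subseteq I$ simultaneously for every $\varphi$; note also that no iteration of $\varphi$ and no persistence of the non-self-similarity condition under passage to $M$ are needed anywhere.
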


\vspace{0mm}

Before proving the theorem above, we need some preparation.

\begin{lemma}\label{lstrongssc}
Let $L$ be a 3-dimensional unsolvable
$\bb{Z}_p$-Lie lattice that admits a diagonalizing basis, 
and let $\bm{x}$ be a 
well dia\-gonalizing basis of $L$.
Let $A = \mr{diag}(a_0,a_1,a_2)$ be the matrix of $L$ with respect 
to $\bm{x}$, and let $a_i = u_ip^{s_i}$ with $s_i\in\bb{N}$,
$u_i\in\bb{Z}_p^*$, and $s_0\les s_1\les s_2$.
Then the basis $\bm{x}$ satisfies the non-self-similarity condition
whenever one of the following conditions is satisfied:
\begin{enumerate}
\item $s_0<s_1<s_2$; or
\item $s_0 = s_1 < s_2$ and $-u_0u_1$ is not a square modulo $p$; or
\item $s_0 < s_1 = s_2$ and $-u_1u_2$ is not a square modulo $p$.
\end{enumerate}
\end{lemma}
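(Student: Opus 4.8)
The plan is to verify the three numbered conditions of Definition \ref{dnonselfsc2} directly from the arithmetic of the $s$-invariants and the units $u_i$, treating the three hypotheses as separate cases. Throughout I write $a_i = u_i p^{s_i}$ with $u_i\in\bb{Z}_p^*$, and I recall that for $p$ odd the squares modulo $p$ are exactly the nonzero residues $e^2$ with $e\in Z_1$; the key elementary fact I will use repeatedly is that a sum $\alpha + \beta$ of two $p$-adic numbers has valuation $\min(v_p(\alpha),v_p(\beta))$ whenever the valuations differ, and that when they are equal the valuation can only increase, with strict increase precisely when the leading units cancel modulo $p$.

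First I would dispose of case (1), where $s_0<s_1<s_2$. Here the three required valuation identities are immediate from the strict inequalities, since in each of conditions (1)--(3) of the definition the terms involved have strictly different valuations and the minimum is always achieved by the term with the smallest $s_i$ (which carries coefficient a unit $e^2$ or $f^2$ that I must check is nonzero modulo $p$, guaranteed by $e\in Z_1$ or $f\in Z_1$ as appropriate). This is exactly the content of Remark \ref{rexmnsc}, so case (1) can essentially be cited. The substantive work is in cases (2) and (3), where two of the $s_i$ coincide and cancellation of leading units becomes possible.

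For case (2), where $s_0=s_1<s_2$ and $-u_0u_1$ is a non-square modulo $p$, the delicate identity is condition (1) of the definition, $v_p(e^2 a_0 + a_1)=v_p(a_0)$ for all $e\in Z_1$. Writing $e^2 a_0 + a_1 = p^{s_0}(e^2 u_0 + u_1)$, the valuation equals $s_0=v_p(a_0)$ unless $e^2 u_0 + u_1\equiv 0$ modulo $p$, i.e. $e^2\equiv -u_1/u_0$; but $-u_1/u_0 = -u_0u_1/u_0^2$ is a non-square by hypothesis, so no such $e$ exists and the identity holds. Condition (2) I would handle by noting $e^2 a_0 + f^2 a_1 + a_2 = p^{s_0}(e^2 u_0 + f^2 u_1) + a_2$ with $v_p(a_2)=s_2>s_0$, so it suffices to show $e^2 u_0 + f^2 u_1\not\equiv 0$ for $e\in Z_1$, $f\in Z_0$; the same non-square argument applied to $f^2/e^2 \equiv -u_0/u_1$ settles it. Condition (3) involves $v_p(f^2 a_1 + a_2)$ with $v_p(a_1)=s_0<s_2=v_p(a_2)$, which is automatic since the valuations differ and $f\in Z_1$ forces the $a_1$-term to have valuation exactly $s_0=v_p(a_1)$.

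Case (3), where $s_0<s_1=s_2$ with $-u_1u_2$ a non-square, is symmetric: conditions (1) and (2) follow from the strict gap $s_0<s_1$ essentially as in case (1), while condition (3), $v_p(f^2 a_1 + a_2)=v_p(a_1)$ with $s_1=s_2$, is precisely where the non-square hypothesis on $-u_1u_2$ is needed, by the cancellation analysis $f^2\equiv -u_2/u_1$ having no solution. I expect the main obstacle to be purely bookkeeping: keeping straight, across the three cases, which valuation comparisons are forced by strict inequalities (and hence automatic) versus which require the non-square condition to preclude leading-unit cancellation, and making sure the quantifier ranges $Z_0$ versus $Z_1$ are used correctly (in particular that $f\in Z_0$ allows $f=0$, which reduces condition (2) to condition (1) and must still be covered). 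No deep input is required beyond the elementary valuation arithmetic and the characterization of squares modulo an odd prime.
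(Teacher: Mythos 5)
Your proof is correct and takes essentially the same route as the paper's: check the three conditions of Definition \ref{dnonselfsc2}, using the basic valuation fact that $v_p(a)<v_p(b)$ implies $v_p(a+b)=v_p(a)$ for the automatic cases, and ruling out cancellation of leading units modulo $p$ via the non-square hypotheses (reducing a putative cancellation to the statement that $-u_iu_j$ is congruent to a square) in the delicate ones. The only difference is thoroughness: the paper explicitly works out a single delicate sub-case (condition (2) of the definition under hypothesis (2)) and leaves the rest to the reader, whereas you cover all cases, including the $f=0$ boundary.
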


\begin{proof}
 Clearly, the elements of $\{1,...,p-1\}$ are invertible in $\bb{Z}_p$.
 Note that some of the desired conditions of Definition
\ref{dnonselfsc2} follow directly from one of the 
basic properties of valuations: if $v_p(a)<v_p(b)$ then $v_p(a+b) = v_p(a)$.
Assume that (2) holds, and let $e\in Z_1$ and $f\in Z_0$. Then  $e^2u_0 + f^2u_1 + u_2p^{s_2-s_0}$
is invertible in $\bb{Z}_p$; indeed, if not, 
then reducing modulo $p$ we have $-u_0u_1 \equiv (u_1fe^{-1})^2$, 
which yields a contradiction, since by assumption 
$-u_0u_1$ is not a square modulo $p$. 
Now, $e^2a_0 +f^2 a_1 + a_2 = (e^2u_0 + f^2u_1 + u_2p^{s_2-s_0})p^{s_0}$ 
implies that $v_p(e^2a_0 +f^2 a_1 + a_2) = s_0 = v_p(a_0)$. 
This proves condition (2) of Definition \ref{dnonselfsc2}. 
The proof of the other cases is similar and is left to the reader.
\end{proof}

\begin{remark}\label{rcannssc}
For $p\ges 3$,
we consider the classification of 3-dimensional unsolvable $\bb{Z}_p$-Lie lattices
given in Remark \ref{rreci3d}. Observe that,
by assumption, the basis corresponding to each of the canonical forms 
is well diagonalizing.
One may apply Lemma \ref{lstrongssc} to see that a
canonical basis of a Lie lattice presented by any of the following canonical forms satisfies
the non-self-similarity condition:
\begin{enumerate}
\item  
$A = \mr{diag}(p^{s_0}, \rho^{\vep_1}p^{s_1},
\rho^{\vep_2}p^{s_2})$ with $s_0 < s_1 < s_2$.
\item 
$A = \mr{diag}(p^{s_0}, -\rho p^{s_0},
p^{s_2})$ with $s_0 < s_2$.
\item 
$A = \mr{diag}(p^{s_0}, p^{s_1},
-\rho p^{s_1})$ with $s_0 < s_1$.
\end{enumerate}
It will turn out that, among the 3-dimensional unsolvable ones,
these Lie lattices  are exactly the ones
that are not self-similar of index $p$.
\end{remark}

\begin{definition}\label{dzxiuxi}
$\phantom{a}$\\\vspace{-5mm}
\begin{enumerate}
\item Let $\Xi$ be the set of symbols $\xi$ of the form
$(), (e), (e,f)$, where $e,f\in Z_0:= 
\{0,1,...,p-1\}$.
We make a partition of the set $\Xi$ as follows,
where $Z_1:= \{1,...,p-1\}$:
\begin{enumerate}
\item $\Xi_0 =  \big\{()\big\}\cup \big\{(e):\, e\in Z_1\big\}\cup
\big\{(e,f):\, e\in Z_1,\, f\in Z_0\big\}$.
\item $\Xi_1 =  \big\{(0)\big\}\cup \big\{ (0,f):\, f\in Z_1\big\}$.
\item $\Xi_2  =  \{(0,0)\}$.
\end{enumerate}
\item With any $\xi\in \Xi$ we associate a matrix $U_\xi$:
\begin{equation*}
U_{()}=
\begingroup
\scriptsize
\left[
\begin{array}{rrr}
p & 0 & 0\\
0 & 1 & 0\\
0 & 0 & 1
\end{array}
\right]
\endgroup
\qquad
U_{(e)} =
\begingroup
\scriptsize
\left[
\begin{array}{rrr}
1 & 0 & 0\\
e & p & 0\\
0 & 0 & 1
\end{array}
\right]
\endgroup
\qquad
U_{(e,f)}=
\begingroup
\scriptsize
\left[
\begin{array}{rrr}
1 & 0 & 0\\
0 & 1 & 0\\
e & f & p
\end{array}
\right].
\endgroup
\end{equation*}
\item Let $L$ be a 3-dimensional $\bb{Z}_p$-lattice endowed with a basis
$\bm{x} = (x_0,x_1,x_2)$. We define a submodule  $L^\xi= \langle y_0, y_1, y_2\rangle$
of $L$ by 
$y_i =\sum_{j=0}^2(U_\xi)_{ji} x_j$, for $i=0,1,2$. 
\end{enumerate}
\end{definition}

\begin{lemma}\label{lclaipsmod}
Let $L$ be a 3-dimensional $\bb{Z}_p$-lattice endowed with a basis
$(x_0,x_1,x_2)$.
The map that associates with any $\xi\in \Xi$ the submodule $L^\xi$ of $L$
is a bijection from
$\Xi$ to the set of index-$p$ submodules of $L$.
\end{lemma}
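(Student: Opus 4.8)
The plan is to reduce everything modulo $p$ and identify index-$p$ submodules with the hyperplanes of the $\bb{F}_p$-vector space $V := L/pL$, where $\bb{F}_p = \bb{Z}_p/p\bb{Z}_p$. First I would record the standard reduction. Any index-$p$ submodule $M$ of $L$ satisfies $pL\subseteq M$, since $L/M$ is killed by $p$; hence $M\mapsto M/pL$ is a well-defined map from the index-$p$ submodules of $L$ to the $2$-dimensional $\bb{F}_p$-subspaces of $V$, with inverse sending a hyperplane $H$ of $V$ to its preimage $\pi^{-1}(H)$ under $\pi:L\to V$, an index-$p$ submodule containing $pL$. Thus it suffices to show that $\xi\mapsto \overline{L^\xi}:=L^\xi/pL$ is a bijection from $\Xi$ onto the set of hyperplanes of $V$. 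By Definition \ref{dzxiuxi} each $L^\xi$ already has index $p$, so $L^\xi\supseteq pL$ and each $\overline{L^\xi}$ is automatically a hyperplane; I never need to verify this by hand.

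Next I would compute the three families explicitly. Writing $\bar x_i$ for the image of $x_i$ in $V$ and reading off the columns of $U_\xi$, the generators with leading coefficient $p$ die modulo $p$, leaving $\overline{L^{()}} = \langle \bar x_1,\bar x_2\rangle$, $\overline{L^{(e)}} = \langle \bar x_0 + e\bar x_1,\ \bar x_2\rangle$, and $\overline{L^{(e,f)}} = \langle \bar x_0 + e\bar x_2,\ \bar x_1 + f\bar x_2\rangle$. I would then pass to the dual picture: a hyperplane is the kernel of a nonzero linear functional on $V$, unique up to scaling, so the hyperplanes are parametrized by $\bb{P}^2(\bb{F}_p)$. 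A short computation gives the class of the functional annihilating each $\overline{L^\xi}$, namely $[1:0:0]$ for $\xi=()$, the class $[-e:1:0]$ for $\xi=(e)$, and the class $[-e:-f:1]$ for $\xi=(e,f)$.

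Finally I would observe that these three families are precisely, and bijectively, the three strata of $\bb{P}^2(\bb{F}_p)$ cut out by the last two coordinates. As $e$ ranges over $\{0,\dots,p-1\}$ the classes $[-e:1:0]$ run exactly once through all points with third coordinate zero and second coordinate nonzero, while $[1:0:0]$ supplies the unique remaining point with second and third coordinates zero; together these are the $p+1$ points of the line $\{c_2=0\}$. As $(e,f)$ ranges over $\{0,\dots,p-1\}^2$ the classes $[-e:-f:1]$ run exactly once through the $p^2$ points with $c_2\neq 0$. Hence $\xi\mapsto\overline{L^\xi}$ hits each of the $p^2+p+1$ hyperplanes of $V$ exactly once, giving the desired bijection. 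Equivalently, since $|\Xi| = 1+p+p^2$ equals the number of index-$p$ submodules, it would suffice to prove injectivity, which the same functional computation delivers; I find the surjective normalization slightly cleaner. The only real work is the bookkeeping matching the three arities of $\xi$ to the three strata of $\bb{P}^2(\bb{F}_p)$, and there is no genuine obstacle.
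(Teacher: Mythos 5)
Your proof is correct, but it takes a genuinely different route from the paper's. The paper parametrizes full-rank submodules by non-degenerate matrices $U\in gl_3(\bb{Z}_p)$ (whose columns give a generating set), notes that two matrices give the same submodule exactly when they differ by right multiplication by an element of $GL_3(\bb{Z}_p)$, observes that the index is $p$ precisely when $v_p(\mr{det}(U))=1$, and then invokes Hermite normal form (column Gaussian reduction over $\bb{Z}_p$, with a citation to Adkins--Weintraub) to conclude that each such orbit contains exactly one $U_\xi$. You instead exploit the special feature of index $p$: since $L/M$ is killed by $p$, every index-$p$ submodule contains $pL$, so such submodules biject with hyperplanes of $V=L/pL\simeq\bb{F}_p^3$, hence with points of $\bb{P}^2(\bb{F}_p)$ via the annihilating functional; your computation of the functionals $[1:0:0]$, $[-e:1:0]$, $[-e:-f:1]$ is accurate and matches the three families of $\Xi$ to the three strata of $\bb{P}^2(\bb{F}_p)$ determined by the last two coordinates, giving all $p^2+p+1$ points exactly once. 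Your argument is more self-contained (no normal-form citation needed) and yields the count of index-$p$ submodules as a byproduct; its limitation is that it is tied to index exactly $p$ (submodules of index $p^2$ need not contain $pL$, so the mod-$p$ reduction would not see them), whereas the paper's Hermite-normal-form viewpoint is uniform in the index and, moreover, the same column-reduction machinery is reused immediately afterwards in Lemmas \ref{lxi012} and \ref{lfrgen}, which is why the paper sets things up that way.
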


\begin{proof}
Given $U\in gl_3(\bb{Z}_p)$, we define a triple $\bm{y}=(y_0,y_1,y_2)$
as in Definition \ref{dzxiuxi}
(and viceversa, any triple $\bm{y}$ corresponds to a unique matrix $U$). 
The submodule $M$ generated by $\bm{y}$ 
has dimension 3 if and only if $\mr{det}(U)\neq 0$;
in this case, $\bm{y}$ is a basis of $M$.
Moreover, $M$ has index $p$ in $L$ 
if and only if $v_p(\mr{det}(U))=1$.
Two  matrices $U$ and  $U'$ determine the same submodule if and only if
there exists $V\in GL_3(\bb{Z}_p)$ such that $U' = UV$.
Finally, note that Gaussian reduction along the columns of a matrix $U$ 
with $v_p(\mr{det}(U))=1$ reduces $U$ to some $U_\xi$
for exactly one value of $\xi$ (see 
\cite[Theorem 2.9 on page 302, Theorem 2.13 on page 304]{AWalgmod}
for details). 
\end{proof}

\begin{lemma}\label{lscip}
Let $L$ be a 3-dimensional $\bb{Z}_p$-Lie lattice that admits a 
diagonalizing basis $\bm{x}=(x_0,x_1,x_2)$, and
denote by  
$A =\mr{diag}(a_0,a_1,a_2)$ the corresponding matrix. 
For $\xi \in \Xi$, let $L^\xi$ be the index-$p$ submodule of $L$ 
of Definition \ref{dzxiuxi}, which comes endowed with a basis
$\bm{y}=(y_0,y_1,y_2)$. We denote by $B_\xi$ the associated matrix
$B$ of Equation
(\ref{ebchbasis}) of Lemma \ref{lantbrmat}.
Then the matrix $B_\xi$ is given as follows, where 
$e,f \in \{0,...,p-1\}$
and the vertical bars enhance readability:
$$ 
B_{()} =
\begingroup
\scriptsize
\left[
\begin{array}{r|r|r}
p^{-1}a_0 & 0 & 0 \\
0 & p a_1 & 0 \\
0 & 0 & pa_2 
\end{array}
\right]
\endgroup ,
\qquad\qquad
B_{(e)} =
\begingroup
\scriptsize
\left[
\begin{array}{r|r|r}
pa_0 & -ea_0 & 0 \\
-ea_0 & p^{-1}(e^2a_0+a_1) & 0 \\
0 & 0 & pa_2 
\end{array}
\right]
\endgroup ,
$$
$$ 
B_{(e,f)} =
\begingroup
\scriptsize
\left[
\begin{array}{r|r|r}
pa_0 & 0 & -ea_0 \\
0 & p a_1 & -f a_1 \\
-ea_0 & -f a_1 & p^{-1}(e^2a_0+f^2 a_1+ a_2) 
\end{array}
\right]
\endgroup .
$$
\end{lemma}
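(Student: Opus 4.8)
The plan is to compute directly, for each $\xi\in\Xi$, the matrix $B_\xi$ defined by Equation (\ref{ebchbasis}) of Lemma \ref{lantbrmat}, namely $B_\xi = \det(U_\xi)\, U_\xi^{-1} A (U_\xi^{-1})^T$, where $U_\xi$ is the explicit matrix attached to $\xi$ in Definition \ref{dzxiuxi} and $A=\mr{diag}(a_0,a_1,a_2)$. Since each $U_\xi$ is lower-triangular with determinant $p$, I first record that $\det(U_\xi)=p$ in all three families, so the scalar prefactor is uniformly $p$. The whole computation is then a routine $3\times 3$ matrix multiplication, carried out once for the generic shape of each family; I would not grind through all $p$ or $p^2$ numerical instances, since the formula is polynomial in the parameters $e,f$.

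First I would treat the family $\xi=()$, where $U_{()}=\mr{diag}(p,1,1)$ is diagonal and invertible over $\bb{Q}_p$ with inverse $\mr{diag}(p^{-1},1,1)$. Here $B_{()} = p\,U_{()}^{-1}A(U_{()}^{-1})^T$ is again diagonal, and a one-line computation gives the entries $p\cdot p^{-1}\cdot a_0\cdot p^{-1}=p^{-1}a_0$ in position $(0,0)$ and $p\cdot a_i = p a_i$ in positions $(i,i)$ for $i=1,2$, matching the stated $B_{()}$. Next, for $\xi=(e)$, the matrix $U_{(e)}$ differs from the identity only in the $(1,0)$ and $(1,1)$ entries; I would invert it explicitly (its inverse is again lower-triangular, with $(U_{(e)}^{-1})_{00}=1$, $(U_{(e)}^{-1})_{11}=p^{-1}$, $(U_{(e)}^{-1})_{10}=-ep^{-1}$) and then multiply out $p\,U_{(e)}^{-1}A(U_{(e)}^{-1})^T$. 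Because $A$ is diagonal, only the top-left $2\times 2$ block mixes, and the bookkeeping yields the symmetric block with diagonal $(pa_0,\,p^{-1}(e^2a_0+a_1))$ and off-diagonal $-ea_0$, while the $(2,2)$ entry stays $pa_2$, exactly as claimed. The computation for $\xi=(e,f)$ is entirely analogous: now $U_{(e,f)}$ perturbs only the last row, the mixing occurs between coordinate $2$ and coordinates $0,1$, and the same kind of multiplication produces the stated $B_{(e,f)}$, with the crucial $(2,2)$ entry $p^{-1}(e^2a_0+f^2a_1+a_2)$ arising as $p\cdot(U^{-1}_{(e,f)})^2$-contributions summed against the $a_i$.

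The only genuinely delicate point, rather than an obstacle, is to confirm that the resulting $B_\xi$ really is the matrix of the antisymmetric bracket on $L^\xi$ with respect to the basis $\bm{y}=(y_0,y_1,y_2)$, and not merely a formal conjugate of $A$. This is precisely the content of part (2)(b) of Lemma \ref{lantbrmat}: once one knows that $L^\xi$ is a subalgebra (which is automatic here since $L$ is a Lie lattice and $\bm{y}$ is a basis of the index-$p$ submodule $L^\xi$), the formula $B=\det(U)U^{-1}A(U^{-1})^T$ computes the structure matrix of $L^\xi$ in the basis $\bm{y}$, so no separate verification of integrality is needed and the lemma follows by invoking Lemma \ref{lantbrmat} after the three displayed computations. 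I expect the main care to lie in keeping the index conventions of Equation (\ref{ebchbasis}) consistent (i.e.\ that $U$ is the matrix whose columns express the $y_i$ in terms of the $x_j$, which matches the definition $y_i=\sum_j (U_\xi)_{ji}x_j$), so that the transpose and the $\det(U)$ factor land in the right places.
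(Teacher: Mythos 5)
Your computation is correct and is exactly the paper's proof: the paper disposes of this lemma with the single sentence that it is ``a straightforward computation using Equation (\ref{ebchbasis})'', and your paragraphs carrying out $B_\xi = \det(U_\xi)\,U_\xi^{-1}A(U_\xi^{-1})^T$ family by family (with $\det(U_\xi)=p$ throughout) supply precisely that computation, with the index conventions handled correctly.

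One remark in your final paragraph is, however, false and should be deleted: you claim that $L^\xi$ being a subalgebra ``is automatic here since $L$ is a Lie lattice and $\bm{y}$ is a basis of the index-$p$ submodule $L^\xi$.'' An index-$p$ submodule of a Lie lattice need not be a subalgebra; whether it is one is governed by part (2)(a) of Lemma \ref{lantbrmat}, i.e.\ by integrality of $B_\xi$, and this is exactly the content of Lemma \ref{lxi012}(1) ($L^\xi$ is a subalgebra if and only if $s_i\ges 1$ for $\xi\in\Xi_i$). Concretely, for $L=sl_2(\bb{Z}_p)$ with $A=\mr{diag}(1,1,1)$ and $\xi=()$, the entry $p^{-1}a_0=p^{-1}$ of $B_{()}$ is not in $\bb{Z}_p$, so $L^{()}$ is not a subalgebra. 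Fortunately this does not damage your proof of the lemma as stated: Lemma \ref{lscip} only asserts the value of the matrix $B_\xi\in gl_3(\bb{Q}_p)$, which part (2) of Lemma \ref{lantbrmat} defines for an arbitrary $3$-dimensional submodule with chosen basis, no subalgebra hypothesis needed; the identification of $B_\xi$ with the structure matrix of $L^\xi$ via part (2)(b) is only invoked later, in the cases where $L^\xi$ has been verified to be a subalgebra.
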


\begin{proof} 
The proof is a straightforward computation using Equation (\ref{ebchbasis}).
\end{proof}

\begin{lemma}\label{lxi012}
Let $L$ be 
a 3-dimensional $\bb{Z}_p$-Lie lattice that admits 
a basis satisfying the non-self-similarity condition, 
and choose one such basis.
Let $s_0\les s_1\les s_2$ be the $s$-invariants of $L$. 
Take $i\in \{0,1,2\}$ and $\xi \in \Xi_i$.
The following holds.
\begin{enumerate}
\item $L^\xi$ is a subalgebra of $L$ if and only if $s_i\ges 1$.
\item If $L^\xi$ is a subalgebra and, moreover, $L$ is unsolvable, 
then the $s$-invariants of $L^\xi$ are:
 \begin{enumerate}
 \item $s_0-1$, $s_1+1$, $s_2+1$, when $i = 0$.
 \item $s_0+1$, $s_1-1$, $s_2+1$, when $i = 1$.
 \item $s_0+1$, $s_1+1$, $s_2-1$, when $i = 2$.
 \end{enumerate}

\end{enumerate}
\end{lemma}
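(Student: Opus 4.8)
The plan is to prove the statement by combining the explicit matrix computation of Lemma \ref{lscip} with the valuation estimates encoded in the non-self-similarity condition (Definition \ref{dnonselfsc2}). The key observation is that the $s$-invariants of a 3-dimensional unsolvable Lie lattice are, by Remark \ref{rsinv}, the $p$-adic valuations of the diagonal entries of any well diagonalizing matrix; and by Proposition \ref{p3dimins}, $L^\xi$ (when it is a subalgebra) is again unsolvable, so its matrix $B_\xi$ is symmetric and non-degenerate and its $s$-invariants are well-defined. The strategy is therefore to read off, for each of the three cases $\xi\in\Xi_i$, the valuations of the entries of $B_\xi$ as given by Lemma \ref{lscip}, and to extract from them the three elementary divisor exponents of $B_\xi$, which are precisely the $s$-invariants of $L^\xi$.

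For part (1), I would argue as follows. By Lemma \ref{lantbrmat}(2)(a), $L^\xi$ is a subalgebra of $L$ if and only if $B_\xi\in gl_3(\bb{Z}_p)$, i.e.\ all its entries are $p$-adic integers. Inspecting Lemma \ref{lscip}, the only entry that can fail to lie in $\bb{Z}_p$ is the one containing a factor $p^{-1}$: for $\xi=()$ this is $p^{-1}a_0$, for $\xi=(e)$ it is $p^{-1}(e^2a_0+a_1)$, and for $\xi=(e,f)$ it is $p^{-1}(e^2a_0+f^2a_1+a_2)$. For $\xi\in\Xi_i$, the defining valuation conditions of the non-self-similarity condition guarantee that this distinguished entry has valuation exactly $s_i$; for instance, when $i=1$ and $\xi=(0,f)$ with $f\in Z_1$, condition (3) of Definition \ref{dnonselfsc2} gives $v_p(f^2a_1+a_2)=v_p(a_1)=s_1$ (and when $\xi=(0)$ the entry is simply $p^{-1}a_1$, again of valuation $s_1$). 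Hence $p^{-1}(\cdots)$ lies in $\bb{Z}_p$ if and only if $s_i\ges 1$, which is exactly the claim; this handles the three sub-cases $i=0,1,2$ uniformly.

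For part (2), assuming $s_i\ges 1$ and $L$ unsolvable, I would compute the elementary divisors of the symmetric non-degenerate matrix $B_\xi$. The cleanest route is to exhibit, in each case, the valuations of the three diagonal entries (and the off-diagonal ones) and then observe that $B_\xi$ can be brought to diagonal form by the congruence-style moves of Remark \ref{rdiagbasis} without changing the multiset of valuations. For $\xi=()$ the matrix is already diagonal with entries of valuations $s_0-1$, $s_1+1$, $s_2+1$, giving case (a) immediately. For $\xi=(e)$ (case $i=0$, so $e\in Z_1$), the upper-left $2\times2$ block has entries $pa_0$, $-ea_0$, $p^{-1}(e^2a_0+a_1)$; condition (1) of Definition \ref{dnonselfsc2} forces $v_p(e^2a_0+a_1)=s_0$, so this block has valuations $s_0+1$, $s_0$, $s_0-1$, and its determinant is $a_0a_1-$ (lower order) of valuation $s_0+s_1$, whence its elementary divisors are $s_0-1$ and $s_1+1$; together with the $(3,3)$-entry $pa_2$ of valuation $s_2+1$ this yields case (a) again. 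The case $\xi=(e,f)\in\Xi_0$ is analogous using condition (2). For $i=1,2$ the same bookkeeping with $\xi=(0),(0,f)$ or $\xi=(0,0)$ produces the permuted exponents listed in (b) and (c).

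The main obstacle, and the step requiring the most care, is the elementary-divisor computation in the off-diagonal cases: one cannot simply read the $s$-invariants off the diagonal, because $B_{(e)}$ and $B_{(e,f)}$ have off-diagonal entries of comparatively small valuation (namely $s_0$, the same as or smaller than some diagonal entries). I expect to handle this by computing the three determinantal (Smith-normal-form) invariants directly—the gcd of the $1\times1$ minors, of the $2\times2$ minors, and the full determinant—and checking that their valuations are $s_i-1$, $(s_i-1)+(s_j+1)$, and $s_0+s_1+s_2$ respectively, so that the successive quotients give exactly the claimed exponents. The non-degeneracy of $B_\xi$ (from unsolvability of $L^\xi$, Proposition \ref{p3dimins}) guarantees the third invariant is finite, and the valuation identities of Definition \ref{dnonselfsc2} are precisely what is needed to pin down the first two minors, closing the argument.
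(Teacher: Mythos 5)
Your proposal follows essentially the same route as the paper's proof: part (1) by observing that the only possibly non-integral entry of $B_\xi$ (Lemma \ref{lscip}) is the one carrying $p^{-1}$, whose inner sum has valuation exactly $s_i$ by the non-self-similarity condition; and part (2) by computing the Smith normal form of $B_\xi$ — the paper does this by explicit Gaussian reduction using the unit $e^2u_0+u_1p^{s_1-s_0}$ as a pivot, while you compute determinantal (gcd-of-minors) invariants, a cosmetic difference. One slip to fix: since $\det(B_\xi)=\det(U_\xi)\det(A)=p\,a_0a_1a_2$, the third determinantal invariant has valuation $s_0+s_1+s_2+1$, not $s_0+s_1+s_2$, which is what makes it consistent with your (correct) claims that the first two invariants have valuations $s_i-1$ and $(s_i-1)+(s_j+1)$ and with the invariants asserted in the lemma.
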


\begin{proof}
Let $\bm{x}=(x_0, x_1, x_2)$ be a basis of $L$
satisfying the non-self-similarity condition.
Part (1) follows from Lemmas \ref{lantbrmat} and \ref{lscip},
and the conditions of Definition \ref{dnonselfsc2}.
For part (2), let $\bm{y}=(y_0,y_1,y_2)$ be the basis
of $L^\xi$ given in Definition \ref{dzxiuxi},
and let $B$ be the matrix of $L^\xi$ with respect to $\bm{y}$.
Defining $z_0= [y_1,y_2]$, $z_1 = [y_2,y_0]$ and $z_2 = [y_0,y_1]$
we have  $[L^\xi, L^\xi]=\langle z_0,z_1,z_2 \rangle$ and 
$z_i = \sum_{j=0}^2 B_{ji}y_j$, $i\in\{0,1,2\}$.
Hence, the $s$-invariants of $L^\xi$ are obtained by computing the 
Smith normal form of $B$, which
can be achieved through Gaussian reduction along the rows and the columns (cf. \cite[Theorem 3.1 on page 307, Remark 3.4 on page 308]{AWalgmod}).
We consider the case of $\xi = (e)$ with $e\in \{1,...,p-1\}$,
so that $i = 0$; the others are similar and left to the reader. 
Recall that $A=\mr{diag}(a_0,a_1,a_2)$ is the notation
for the matrix of $L$ with respect to $\bm{x}$.
Observe that, since $L^\xi$
is a subalgebra  and part (1) holds, we
have $s_0\ges 1$. The matrix $B=B_{(e)}$ 
is block diagonal (Lemma \ref{lscip}),
and we see at once that $s_2+1$ is one of the 
$s$-invariants. We have to compute the Smith normal form 
of the $2\times 2$ block 
$$
\begingroup
\small
\left[
\begin{array}{r|r}
pa_0 & -ea_0 \\
-ea_0 & p^{-1}(e^2a_0+a_1)  \\
\end{array}
\right]
\endgroup
= 
\begingroup
\small
\left[
\begin{array}{rr}
u_0p^{s_0+1} & -eu_0p^{s_0} \\
-eu_0p^{s_0} & u p^{s_0-1} \\
\end{array}
\right]
\endgroup ,
$$
where $a_j = u_j p^{s_j}$ with $u_j\in\bb{Z}_p^*$
and $u = e^2u_0+u_1p^{s_1-s_0}$. It follows from the non-self-similarity
condition that $u$ is invertible in $\bb{Z}_p$. Multiplying the matrix
displayed above by 
\scalebox{0.8}{$
\left[
\begin{array}{rr}
1 & 0 \\
eu_0u^{-1}p^{} & 1 \\
\end{array}
\right]
$}
on the right, and by 
\scalebox{0.8}{$
\left[
\begin{array}{rr}
1 & eu_0u^{-1}p^{} \\
0 & 1 \\
\end{array}
\right]
$}
on the left, we get
\scalebox{0.8}{$
\left[
\begin{array}{rr}
u_0u_1u^{-1}p^{s_1+1} & 0 \\
0 & up^{s_0-1} \\
\end{array}
\right]$}.
It follows that the other two $s$-invariant are $s_0-1$ and $s_1+1$,
as desired.
\end{proof}

\vspace{5mm}

The following is the key lemma for the proof of Theorem \ref{tsl11noss}.

\begin{lemma}\label{lfrgen}
Let $L$ be a 3-dimensional unsolvable $\bb{Z}_p$-Lie lattice that
admits a basis $(x_0,x_1,x_2)$ satisfying the non-self-similarity condition. 
Let $s_0\les s_1\les s_2$ be the $s$-invariants
of $L$, $i\in \{0,1,2\}$, $\xi\in \Xi_i$, and suppose that
$M: = L^\xi$ is a subalgebra of $L$. 
Then
$$[M,M]+ p^{s_i}M = p[L,L]+p^{s_i}L.$$
\end{lemma}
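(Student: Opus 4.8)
The plan is to translate the asserted equality of submodules of $L$ into an equality of lattices in $\bb{Z}_p^3$, written in the fixed basis $\bm{x}=(x_0,x_1,x_2)$, and then to verify it by an explicit elimination. Write $A=\mr{diag}(a_0,a_1,a_2)$ with $a_j=u_jp^{s_j}$, $u_j\in\bb{Z}_p^*$. From the relations $[x_1,x_2]=a_0x_0$, $[x_2,x_0]=a_1x_1$, $[x_0,x_1]=a_2x_2$ we read off $[L,L]=\langle p^{s_0}x_0,p^{s_1}x_1,p^{s_2}x_2\rangle$, so the right-hand side $p[L,L]+p^{s_i}L$ is the $\bb{Z}_p$-span of the columns of the two diagonal matrices $pA$ and $p^{s_i}I$. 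Likewise, since $M=L^\xi=\langle y_0,y_1,y_2\rangle$ with $y_j=\sum_k(U_\xi)_{kj}x_k$ (Definition \ref{dzxiuxi}), the module $p^{s_i}M$ is spanned by the columns of $p^{s_i}U_\xi$.

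First I would pin down $[M,M]$ in $\bm{x}$-coordinates. As $(y_0,y_1,y_2)$ is a basis of $M$ with associated matrix $B_\xi$ (Lemma \ref{lscip}), the generators $z_0=[y_1,y_2]$, $z_1=[y_2,y_0]$, $z_2=[y_0,y_1]$ of $[M,M]$ satisfy $z_i=\sum_l(B_\xi)_{li}y_l$, so their $\bm{x}$-coordinate vectors are the columns of $U_\xi B_\xi$. Equation (\ref{ebchbasis}) together with $\mr{det}(U_\xi)=p$ gives $U_\xi B_\xi=pA(U_\xi^{-1})^T$, whence $[M,M]$ is the span of the columns of $pA(U_\xi^{-1})^T$. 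The whole statement thus becomes the assertion that the columns of $pA(U_\xi^{-1})^T$ and $p^{s_i}U_\xi$ together span the same lattice as the columns of $pA$ and $p^{s_i}I$; I would check this for the finitely many shapes of $U_\xi$, grouped by the partition $\Xi=\Xi_0\sqcup\Xi_1\sqcup\Xi_2$ (here $s_i\ges1$, since $M$ is a subalgebra, by Lemma \ref{lxi012}).

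For the three diagonal representatives $\xi=()$, $\xi=(0)$, $\xi=(0,0)$ the matrix $U_\xi$ is diagonal, both sides split coordinatewise, and the identity reduces to elementary comparisons of the form $\min(s_j+1,s_i)$ against the exponents on the left; these hold at once from $s_0\les s_1\les s_2$, with no further hypothesis. The heart of the lemma is the three off-diagonal families, where exactly one generator $z$ of $[M,M]$ mixes two coordinates. Here the inclusion $\subseteq$ is a routine check that every displayed generator has each coordinate of valuation at least the corresponding exponent $\min(s_j+1,s_i)$ on the right. For the reverse inclusion I would add to the mixed generator suitable $\bb{Z}_p$-multiples of the generators of $p^{s_i}M$ so as to cancel all coordinates but one; for instance, for $\xi=(e,f)$ with $e\neq0$ (so $i=0$, $y_0=x_0+ex_2$, $y_1=x_1+fx_2$) one gets
\[
z_2 + eu_0\,p^{s_0}y_0 + fu_1p^{s_1-s_0}\,p^{s_0}y_1 = (e^2a_0+f^2a_1+a_2)\,x_2 ,
\]
and analogously $(e^2a_0+a_1)x_1$ for $\xi=(e)$ and $(f^2a_1+a_2)x_2$ for $\xi=(0,f)$ with $f\neq0$.

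The point where Definition \ref{dnonselfsc2} enters, and the main obstacle, is showing that the surviving coefficient is a \emph{unit} times $p^{s_i}$: conditions (1), (2), (3) say precisely that $v_p(e^2a_0+a_1)=v_p(a_0)=s_0$, that $v_p(e^2a_0+f^2a_1+a_2)=v_p(a_0)=s_0$, and that $v_p(f^2a_1+a_2)=v_p(a_1)=s_1$, so the three off-diagonal cases match the three conditions one-to-one and in each the surviving vector equals a unit times $p^{s_i}$ times a single basis vector. This places that $p^{s_i}x_j$ in the left-hand side; combining it with the remaining pure generators $pa_jx_j$ of $[M,M]$ and $p^{s_i}y_j$ of $p^{s_i}M$ then recovers the other two generators of the right-hand side, giving $\supseteq$ and hence equality. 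The only care needed is bookkeeping: verifying that the chosen multipliers lie in $\bb{Z}_p$ (the factors $p^{s_1-s_0}$ and $p^{s_2-s_1}$ are integral because $s_0\les s_1\les s_2$) and tracking which coordinate survives in each $\Xi_i$.
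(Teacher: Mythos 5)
Your proof is correct and takes essentially the same route as the paper's: both sides are expressed in $\bm{x}$-coordinates (via Lemma \ref{lscip} and Equation (\ref{ebchbasis})), the argument splits into the diagonal cases and the three off-diagonal families of $\Xi_0,\Xi_1,\Xi_2$, and the non-self-similarity condition enters at exactly the same point, to guarantee that the pivot $e^2a_0+a_1$, $e^2a_0+f^2a_1+a_2$, or $f^2a_1+a_2$ is a unit times $p^{s_i}$. The only difference is presentational: the paper compares Hermite normal forms of the $3\times 6$ matrices $\left[\, pA \mid p^{s_i}I\,\right]$ and $U_\xi\left[\, B \mid p^{s_i}I\,\right]$, whereas you perform the same column elimination by hand as a two-sided inclusion of lattices.
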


\begin{proof}
Let $\bm{x}=(x_0,x_1,x_2)$, and
let $\bm{y} = (y_0,y_1,y_2)$ 
be the basis of $M$ given in Definition \ref{dzxiuxi}.
In coordinates with respect to $\bm{x}$, the submodule
$p[L,L]+p^{s_i}L$ is generated by the columns of the $3\times 6$ matrix
given in block form by $A = \left[\, pA \,|\, p^{s_i}I\,\right]$, 
where $A=\mr{diag}(a_0,a_1,a_2)$ is the matrix 
of $L$ with respect to $\bm{x}$, and 
$I$ is the $3\times 3$
identity matrix. Likewise, in coordinates with respect to $\bm{y}$,
the submodule $[M,M] + p^{s_i}M$ is
generated by the columns of the $3\times 6$ matrix
given in block form by 
$B'=\left[\, B\, |\, p^{s_i}I\,\right ]$, where $B$ is the matrix 
of $M$ with respect to $\bm{y}$.
In order to compare $p[L,L]+p^{s_i}L$ with $[M,M] + p^{s_i}M$,
we have to write generators of the latter in coordinates with respect
to $\bm{x}$. This is achieved by multiplying $B'$ 
by $U = U_\xi$ (Definition \ref{dzxiuxi}) on the left.
It follows that $p[L,L]+p^{s_i}L =[M,M] + p^{s_i}M$
if and only if $A'$ and $U B'$ have the same
Hermite normal form (reducing along the columns); 
see, for instance,
\cite[Theorem 2.9 on page 302, Theorem 2.13 on page 304]{AWalgmod}.
The Hermite normal form can be computed through 
Gaussian reduction 
\cite[Remark 3.4 on page 308]{AWalgmod}.
We treat the case $i=0$ and $\xi = (e)$ with $e\in \{1,...,p-1\}$; 
the other cases are left to the reader. 
The matrix $A'$ is easily reduced:
$$
A' = 
\begingroup
\scriptsize
\left[
\begin{array}{rrr|rrr}
pa_0 & 0 & 0 & p^{s_0} & 0 & 0\\
0 & pa_1 & 0 &0 & p^{s_0} & 0\\
0 &  0 & pa_2 & 0 & 0 & p^{s_0}
\end{array}
\right]
\endgroup
\sim
\begingroup
\scriptsize
\left[
\begin{array}{rrr|rrr}
p^{s_0} & 0 & 0 & 0 & 0 & 0\\
0 & p^{s_0} & 0 &0 &0 & 0\\
0 &  0 & p^{s_0} & 0 & 0 & 0
\end{array}
\right]
\endgroup ,
$$
where `$\sim$' denotes multiplication on the right 
by a $3\times 3$ matrix that is invertible over $\bb{Z}_p$.
Observe that this is a fancy way to say that $p[L,L]+p^{s_0}L = p^{s_0}L$
(for $i=1,2$ a similar phenomenon 
does not happen, a fact that does not change 
the difficulty of the proof). On the other hand
$$
UB' = 
\begingroup
\scriptsize
\left[
\begin{array}{rrr|rrr}
pa_0 & -ea_0 & 0 & p^{s_0} & 0 & 0\\
0 & a_1 & 0 & ep^{s_0} & p^{s_0+1} & 0\\
0 &  0 & pa_2 & 0 & 0 & p^{s_0}
\end{array}
\right]
\endgroup .
$$
The entry $p^{s_0}$ on the sixth column is used to eliminate 
the entry $pa_2$ on the third one. Moreover, the entry $p^{s_0}$
on the fourth column is used to eliminate the entries $pa_0$ and
$-ea_0$ on the first line. We get
$$ 
UB' 
\sim 
\begingroup
\scriptsize
\left[
\begin{array}{rrr|rrr}
0 & 0 & 0 & p^{s_0} & 0 & 0\\
-epa_0 & e^2a_0 + a_1 & 0 & ep^{s_0} & p^{s_0+1} & 0\\
0 &  0 & 0 & 0 & 0 & p^{s_0}
\end{array}
\right]
\endgroup .
$$
From the non-self-similarity condition, 
$v_p(e^2a_0+a_1) = v_p(a_0)=s_0$, so that the entry 
$e^2a_0+a_1$ may be used to eliminate all the other entries on the second
row. After having done that, one may permute the columns and get
the same normal form as for $A'$. 
\end{proof}

\vspace{5mm}

\noindent
\textbf{Proof of Theorem \ref{tsl11noss}.}
Let $M$ be a subalgebra of $L$ of index $p$, and let $\varphi: M\ra L$
be a virtual endomorphism of $L$. We will show  that $\varphi$ is not simple.

Suppose that  $\varphi$ is not injective. Then by  item (\ref{l3dimhji2}) of Proposition  \ref{p3dimins}, 
$\mr{ker}(\varphi)$ is 3-dimensional, so there is some $k$ such that
$I:=p^kL\subseteq \mr{ker}(\varphi)$ is a non-trivial 
$\varphi$-invariant ideal of $L$. Hence,  $\varphi$ is not simple.
Now assume that $\varphi$ is injective. Then
$\varphi$ establishes an isomorphism of $M$ with its 
image $M':=\varphi(M)$. By Proposition \ref{pisotind},  $M'$ has index $p$ in $L$ as well.
Take a  basis $\bm{x}$ of $L$
satisfying the non-self-similarity condition.
Recall the notation of Definition \ref{dzxiuxi}.
There are (unique) $\xi,\eta\in \Xi$ such that $M= L^\xi$ and $M'=L^\eta$
(Lemma \ref{lclaipsmod}). Hence,
there are unique $i,j\in\{0,1,2\}$ such that $\xi \in \Xi_i$ and $\eta \in \Xi_j$.
The rest of the proof, divided in four cases, relies on Lemma \ref{lfrgen}. 

\textit{Case 1:} $i = j$.
We take $I := p[L,L]+p^{s_i}L$, a non-trivial ideal of $L$. 
We show that $I$ is $\varphi$-invariant.
From Lemma \ref{lfrgen}, we have $[M,M]+p^{s_i}M = I = [M',M']+p^{s_i}M'$.
Observe that $I\subseteq M$.
Since $\varphi:M\rar M'$ is an isomorphism of Lie algebras, we have
$\varphi([M,M]+p^{s_i}M) = [M',M']+p^{s_i}M'$,
so that $\varphi(I) = I$.

\textit{Case 2:} $\{i, j\}=\{0,1\}$. 
We take $I: = p[L,L]+ p^{s_0}L$, a non-trivial ideal of $L$. 
We show that $I$ is $\varphi$-invariant.
First of all, we observe that $s_i = s_j$;
indeed, we can assume $i=0$ and $j=1$, so that
the minimum $s$-invariant of $M$ is $s_0-1$ (see Lemma \ref{lxi012});
if it was $s_0<s_1$ then all the $s$-invariants of $M'$
would be greater then $s_0-1$, and $M$ and $M'$
could not be isomorphic. 
From Lemma
\ref{lfrgen} we have
$[M,M]+p^{s_i}M = p[L,L] + p^{s_i}L$ and
$[M',M']+p^{s_j}M'= p[L,L]+p^{s_j}L$.
Since $s_i = s_j= s_0$, 
the same arguments as in case 1 permit
to conclude that $I\subseteq M$ and $\varphi(I) = I$.

\textit{Case 3:} $\{i, j\}=\{1,2\}$.
Taking $I: = p[L,L]+ p^{s_1}L$, the argument is  
very similar to the one of case 2. The only difference is that,
in proving that $s_i=s_j$, one has to consider the maximum value of the  
$s$-invariants instead of the minimum.

\textit{Case 4:} $\{i, j\}=\{0,2\}$.
We show that in this case we have a contradiction.
By assumption, $s_0<s_2$. We can assume $i=0$ and $j=2$. Observe that
the minimum $s$-invariant of $M$, namely $s_0-1$,
is less then all the $s$-invariants of $M'$ (Lemma \ref{lxi012}), 
so that $M\not\simeq M'$, contrary to the assumptions.
\ep


\subsection{Extension of scalars}\label{sextsca}

Throughout this section $p$ denotes an \textit{odd} prime.
Given a 3-dimensional unsolvable Lie lattice $L$
over $\bb{Z}_p$, we compute the isomorphism class of 
$L\otimes_{\bb{Z}_p} \bb{Q}_p$, a 3-dimensional unsolvable Lie algebra
over $\bb{Q}_p$ (Proposition \ref{pextsca}).
To achieve our goal, we define a $(\bb{Z}/2\bb{Z})$-valued function 
of a symmetric non-degenerate matrix $A$. This function is defined through
the classical discriminant and $\vep$-invariant of $A$, where $A$ is
thought of as a non-degenerate quadratic form
(see Section IV.2 of \cite{SerArithmetic}).
Actually, we will use the \textit{additive} $\vep$-invariant, 
which is defined through the additive version of the
Hilbert symbol, the latter being denoted by $[a,b]$ in
\cite[Remark on page 23]{SerArithmetic}.

\begin{definition}\label{detainv2}
Let $A\in gl_3(\bb{Q}_p)$ be symmetric and non-degenerate.
Let $d(A)\in \bb{Q}_p^*/(\bb{Q}_p^*)^2$  and 
$e(A)\in\bb{Z}/2\bb{Z}$ be the discriminant and the additive 
$\vep$-invariant of $A$ .
The $\eta$\textbf{-invariant} of $A$ is defined to be
$$ \eta(A): = \delta_p v_p(d(A)) + e(A)\qquad\qquad 
\eta(A)\in\bb{Z}/2\bb{Z},$$
where $\delta_p$ is the residue of $(p-1)/2$ modulo 2;
we observe that the $p$-adic valuation of 
an element of $ \bb{Q}_p^*/(\bb{Q}_p^*)^2$
is well defined in $\bb{Z}/2\bb{Z}$.
\end{definition}

\begin{lemma}\label{letainvcom}
Let $A = \mr{diag}(u_0p^{s_0}, u_1p^{s_1}, u_2p^{s_2})$
with $u_i\in\bb{Z}_p^*$ and $s_i\in\bb{Z}$ for $i\in \{0,1,2\}$.
The following holds.
\begin{enumerate}
\item $\eta(A) \equiv 1$ (mod 2) if and only if 
two of the $s$-invariants have the same parity while the
third $s$-invariant has a different one, say 
$s_i\equiv s_j\not\equiv s_k$ (mod 2), and 
$-u_iu_j$ is not a square modulo $p$.
\item For the following specific forms of $A$ (which include the canonical
forms of Remark \ref{rreci3d}), the $\eta$-invariant
is given by the following formulas, 
where $\rho \in \bb{Z}_p^*$ is not a square modulo $p$, 
$\vep_1,\vep_2 \in \{0,1\}$, $\delta_p\equiv (p-1)/2$ (mod 2),
and the congruences are modulo 2:
 \begin{enumerate}
 \item If
 $A = \mr{diag}(p^{s_0}, \rho^{\vep_1}p^{s_1},\rho^{\vep_2}p^{s_2})$ then
 $$\eta(A) \equiv \delta_p(s_0+s_1+s_2 + s_0s_1+s_0s_2+s_1s_2)+
 (\vep_1+\vep_2)s_0 + \vep_2 s_1 + \vep_1 s_2.$$
 \item If $A = \mr{diag}(p^{s_0}, -\rho^{\vep_1}p^{s_0},
 p^{s_2})$ then $\eta(A) \equiv\vep_1(s_0+s_2)$.
 \item If $A = \mr{diag}(p^{s_0}, p^{s_1},
 -\rho^{\vep_2}p^{s_1})$ then $\eta(A) \equiv \vep_2(s_0+s_1)$. 
 \item If $A = \mr{diag}(p^{s_0}, p^{s_0},p^{s_0})$ then
 $\eta(A) \equiv 0$.
 \end{enumerate}
\end{enumerate}
\end{lemma}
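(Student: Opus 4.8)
The plan is to reduce everything to Serre's explicit formula for the Hilbert symbol at the odd prime $p$ and then do parity bookkeeping. Recall from \cite{SerArithmetic} that, writing $a = u_a p^{\alpha}$ and $b = u_b p^{\beta}$ with $u_a,u_b\in\bb{Z}_p^*$, the additive Hilbert symbol is
$$[a,b] \equiv \delta_p\,\alpha\beta + \beta\,\lambda(u_a) + \alpha\,\lambda(u_b) \pmod 2,$$
where $\lambda:\bb{Z}_p^*\ra\bb{Z}/2\bb{Z}$ is the homomorphism with $(-1)^{\lambda(u)}=\left(\frac{u}{p}\right)$, so that $\lambda(u)=0$ exactly when $u$ is a square modulo $p$. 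First I would record the two facts about $\lambda$ on which the whole argument rests: it is additive, $\lambda(uv)=\lambda(u)+\lambda(v)$, and $\lambda(-1)\equiv\delta_p$, which is just the first supplement to quadratic reciprocity ($-1$ is a square modulo $p$ iff $p\equiv 1 \pmod 4$ iff $(p-1)/2$ is even).

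Next I would compute the two classical invariants of $A=\mr{diag}(u_0p^{s_0},u_1p^{s_1},u_2p^{s_2})$ directly from the diagonalization. The discriminant is $d(A)=u_0u_1u_2\,p^{s_0+s_1+s_2}$ up to squares, so $v_p(d(A))\equiv s_0+s_1+s_2\pmod 2$; and the additive $\vep$-invariant of a diagonal form is $e(A)=\sum_{i<j}[a_i,a_j]$. Feeding the Hilbert-symbol formula into this sum and writing $\lambda_i:=\lambda(u_i)$ yields the master formula
$$\eta(A)\equiv \delta_p\big(s_0+s_1+s_2+s_0s_1+s_0s_2+s_1s_2\big)+\lambda_0(s_1+s_2)+\lambda_1(s_0+s_2)+\lambda_2(s_0+s_1)\pmod 2.$$
Part (2) then drops out by substitution. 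For family (a) I would set $\lambda_0=0$, $\lambda_1=\vep_1$, $\lambda_2=\vep_2$ and read off the stated expression. For families (b) and (c) I would use $s_1=s_0$ (resp. $s_2=s_1$) together with $\lambda(-\rho^{\vep_1})=\lambda(-1)+\vep_1=\delta_p+\vep_1$ and the reductions $s^2\equiv s$ and $2s\equiv 0$, so that the $\delta_p$ coming from the valuation term cancels against the $\lambda(-1)$ coming from the minus sign, leaving $\eta(A)\equiv\vep_1(s_0+s_2)$ (resp. $\vep_2(s_0+s_1)$). Family (d) is immediate, since $s_0(s_0+1)$ is even.

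For part (1) I would run a case analysis on the parities of $s_0,s_1,s_2$ applied to the master formula. If all three $s_i$ share a parity, every mixed sum $s_j+s_k$ is even and the $\delta_p$-bracket is even as well, giving $\eta(A)\equiv 0$; this is consistent with the statement, since the ``two equal, one different'' hypothesis then fails. If exactly two agree, say $s_i\equiv s_j\not\equiv s_k$, then the coefficient $(s_i+s_j)$ of $\lambda_k$ is even while the coefficients $(s_j+s_k)$ and $(s_i+s_k)$ of $\lambda_i,\lambda_j$ are odd, and a short check shows the $\delta_p$-bracket is odd as well; the formula thus collapses to $\eta(A)\equiv\delta_p+\lambda_i+\lambda_j$. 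By the two facts on $\lambda$ this equals $\lambda(-u_iu_j)$, which is $1$ exactly when $-u_iu_j$ is a non-square modulo $p$, as claimed.

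The only genuinely delicate point, which I would flag and check carefully against the normalization in Definition \ref{detainv2}, is the bookkeeping of the sign $\lambda(-1)$: it must be tracked so that it merges correctly with the $\delta_p\,v_p(d(A))$ contribution, and this is precisely what makes the clean identity $\delta_p+\lambda_i+\lambda_j=\lambda(-u_iu_j)$ in part (1), and the cancellation in families (b) and (c) of part (2), come out right. Everything else is routine once the master formula is in hand.
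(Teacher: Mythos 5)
Your proof is correct and follows essentially the same route as the paper: both derive the identical master formula $\eta(A)\equiv \delta_p\bigl(\sum_i s_i + \sum_{i<j}s_is_j\bigr)+\sum_{i<j}(\lambda_i s_j+\lambda_j s_i) \pmod 2$ from Serre's additive Hilbert symbol, and both reduce part (1) to the observation that $-u_iu_j$ is a square modulo $p$ exactly when $\delta_p+\lambda_i+\lambda_j\equiv 0 \pmod 2$. The only difference is that you spell out the substitutions and the parity case analysis that the paper leaves as ``a direct consequence of this formula,'' which is fine.
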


\begin{proof}
Given $A$, 
define $\vep_i = 0$ if $u_i$ is a square modulo $p$,
and $\vep_i = 1$ if $u_i$ is not a square modulo $p$.
A straightforward computation 
shows that  
$$\eta(A) \equiv 
\delta_p \left(\sum_{i} s_i + \sum_{i<j} s_is_j \right)
+ \sum_{i<j} (\vep_i s_j + \vep_j s_i)\qquad \mbox{(mod 2)},$$
where the indices $i,j$ take values in $\{0,1,2\}$.
(For the computation of the Hilbert symbols involved see, 
for instance, \cite[Remark on page 23]{SerArithmetic}.)
Both items of the lemma are a direct consequence 
of this formula. For item (1) one has to observe
that $-u_iu_j$ is a square modulo $p$ if and only
if $\delta_p + \vep_i +\vep_j \equiv 0$ (mod 2).
\end{proof}

\vspace{0mm}

\begin{proposition}\label{pmciffeta}
Let $A,B\in gl_3(\bb{Q}_p)$ be symmetric and non-degenerate.
Then $A$ is multiplicatively congruent to $B$ (over $\bb{Q}_p$)
if and only if $\eta(A) = \eta(B)$.
\end{proposition}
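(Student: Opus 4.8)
The plan is to reduce multiplicative congruence over $\bb{Q}_p$ to ordinary congruence together with scaling, and then to track how the pair $(d(A),e(A))$ — hence $\eta(A)$ — behaves. Recall the classical classification of nondegenerate quadratic forms over $\bb{Q}_p$ (e.g. \cite[Theorem 3.1]{CasRQF} or \cite[IV.2.3]{SerArithmetic}): two such forms of the same rank are congruent if and only if they have the same discriminant in $\bb{Q}_p^*/(\bb{Q}_p^*)^2$ and the same additive $\vep$-invariant. By definition $A\sim_{MC}B$ means $B\sim_C uA$ for some $u\in\bb{Q}_p^*$, so $A\sim_{MC}B$ holds iff $d(B)=d(uA)$ and $e(B)=e(uA)$ for some $u$. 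Thus everything comes down to computing $d(uA)$ and $e(uA)$, and I would organize the whole proof around the single identity $\eta(uA)=\eta(A)$.

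For the scaling formulas, since $A$ has rank $3$ one gets $d(uA)=u^3d(A)\equiv u\,d(A)$ in $\bb{Q}_p^*/(\bb{Q}_p^*)^2$, while a standard Hilbert-symbol computation gives $e(uA)=e(A)+[u,u]$ (in rank $3$ the general factor $(u,d(A))^{2}$ collapses to $0$). The key step — the one I expect to carry the argument — is the evaluation $[u,u]=[u,-1]=\delta_p\,v_p(u)$ in $\bb{Z}/2\bb{Z}$: the first equality is the standard relation $[u,-u]=0$, and the second follows from the explicit formula for the Hilbert symbol of $\bb{Q}_p$ with $p$ odd (\cite[III.1]{SerArithmetic}), using $\left(\tfrac{-1}{p}\right)=(-1)^{\delta_p}$. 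Feeding these into the definition, $\eta(uA)=\delta_p\,v_p(u^3d(A))+e(A)+[u,u]=\eta(A)+3\delta_p v_p(u)+\delta_p v_p(u)=\eta(A)$, since $4\delta_p v_p(u)\equiv 0$. This establishes that $\eta$ is a multiplicative-congruence invariant, which is precisely the ``only if'' direction.

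For the ``if'' direction, assume $\eta(A)=\eta(B)$ and choose $u\in\bb{Q}_p^*$ representing the square class $d(A)d(B)$, so that $u\,d(A)\equiv d(B)$ modulo squares (possible because the square classes form a group). Then $d(uA)=d(B)$, and taking $p$-adic valuations modulo $2$ gives $v_p(u)\equiv v_p(d(A))+v_p(d(B))$; combining this with $e(uA)=e(A)+\delta_p v_p(u)$, a direct check shows that $e(uA)=e(B)$ is equivalent to $e(A)+\delta_p v_p(d(A))=e(B)+\delta_p v_p(d(B))$, i.e. to $\eta(A)=\eta(B)$, which holds by hypothesis. Hence $uA$ and $B$ share rank, discriminant and $\vep$-invariant, so $uA\sim_C B$ by the classification theorem, and therefore $A\sim_{MC}B$. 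The only genuine obstacle is the bookkeeping in the scaling formula for $e$ and the identity $[u,u]=\delta_p v_p(u)$; once these are in place, both implications are immediate. As an alternative packaging, one could instead compute the two orbits of the scaling action on the eight congruence classes $(d,e)$ and observe that $\eta$ separates them.
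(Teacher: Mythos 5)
Your proof is correct and takes essentially the same route as the paper's: reduce multiplicative congruence to ordinary congruence plus scaling by $u\in\bb{Q}_p^*$, invoke the classification of non-degenerate quadratic forms over $\bb{Q}_p$ by $(d,e)$, and use the scaling formulas $d(uA)\equiv u\,d(A)$ and $e(uA)=e(A)+\delta_p v_p(u)$ to see that a suitable $u$ exists exactly when $\eta(A)=\eta(B)$. The only difference is that you supply the Hilbert-symbol derivation $[u,u]=[u,-1]=\delta_p v_p(u)$ of the scaling formula for $e$, which the paper asserts without proof.
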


\begin{proof}
By definition, $A$ is multiplicatively congruent to $B$ 
if and only if there exists $u\in\bb{Q}_p^*$ such that $uA$ 
is congruent to $B$ (cf. Remark \ref{ralaq}). Moreover, `$uA$ congruent to $B$'
is equivalent to `$d(uA) = d(B)$ and 
$e(uA) = e(B)$'. Since $d(uA) = u d(A)$ and $e(uA) = \delta_pv_p(u)+ e(A)$, 
one can see
that `there exists $u\in\bb{Q}_p^*$ such that $uA$ 
is congruent to $B$' is equivalent to `$\eta(A) = \eta(B)$'.
\end{proof}

\begin{remark}\label{ralaq}
Analogous statements to the ones given at the 
beginning of Section \ref{scla3dill} hold over $\bb{Q}_p$.
In particular, 
the map $A\mapsto L_A$ induces a bijection between the set
of multiplicative congruence classes of
matrices of $gl_3(\bb{Q}_p)$ and the 
set
of isomorphism classes
of 3-dimensional antisymmetric algebras over $\bb{Q}_p$.
Also, the analogue of item (\ref{liliffsnzd2})
of Proposition \ref{p3dimins} holds,
so that, under $A\mapsto L_A$, symmetric non-degenerate matrices
correspond exactly to unsolvable Lie algebras.
Recall from
Example \ref{ecansl} that
there exist bases of $sl_2(\bb{Z}_p)$
and $sl_1(\Delta_p)$
such that the associated matrices 
are $A_0 = \mr{diag}(1,1,1)$
and $A_1 = \mr{diag}(1,-\rho, p)$, respectively. 
After tensoring by $\bb{Q}_p$, the same is true for 
$sl_2(\bb{Q}_p)\simeq sl_2(\bb{Z}_p)\otimes_{\bb{Z}_p}\bb{Q}_p$
and $sl_1(\bb{D}_p)\simeq sl_1(\Delta_p)\otimes_{\bb{Z}_p}\bb{Q}_p$.
One computes $\eta(A_0) = 0$ and $\eta(A_1) = 1$ (Lemma \ref{letainvcom}),
from which we see that both values of $\eta$, namely 0 and 1,
are represented.

The above observations, together with 
Proposition \ref{pmciffeta}, (re)prove that
there are exactly two isomorphism classes of 
3-dimensional unsolvable Lie algebras over $\bb{Q}_p$, 
and that $sl_2(\bb{Q}_p)$
and $sl_1(\bb{D}_p)$ represent these two classes.
\end{remark}

\vspace{0mm}

\begin{proposition}\label{pextsca}
Let $p\ges 3$ be a prime, and  
$L$ be a 3-dimensional unsolvable $\bb{Z}_p$-Lie lattice.
Let $A\in gl_3(\bb{Z}_p)\subseteq gl_3(\bb{Q}_p)$ 
be the (symmetric non-degenerate) matrix of $L$ associated with
some basis.
The following holds.
\begin{enumerate}
\item $L\otimes_{\bb{Z}_p}\bb{Q}_p \simeq sl_2(\bb{Q}_p)$ if and only if 
$\eta(A) = 0$. 
\item $L\otimes_{\bb{Z}_p}\bb{Q}_p \simeq sl_1(\bb{D}_p)$ if and only if 
$\eta(A) = 1$. 
\end{enumerate}
\end{proposition}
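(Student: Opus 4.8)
The plan is to reduce the statement about the Lie algebra isomorphism class of $L\otimes_{\bb{Z}_p}\bb{Q}_p$ to the already-established classification of $\bb{Q}_p$-Lie algebras via the $\eta$-invariant. By Remark \ref{ralaq}, there are exactly two isomorphism classes of 3-dimensional unsolvable Lie algebras over $\bb{Q}_p$, represented by $sl_2(\bb{Q}_p)$ and $sl_1(\bb{D}_p)$, distinguished by the value $\eta = 0$ and $\eta = 1$ respectively. So the entire proof hinges on showing that tensoring $L$ with $\bb{Q}_p$ corresponds, on the level of matrices, to viewing $A$ inside $gl_3(\bb{Q}_p)$, and that the isomorphism class of the resulting $\bb{Q}_p$-algebra is governed by the multiplicative congruence class of $A$ over $\bb{Q}_p$, hence by $\eta(A)$.

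First I would observe that if $A\in gl_3(\bb{Z}_p)$ is the matrix of $L$ with respect to a basis $\bm{x}$, then the same basis (now viewed as a $\bb{Q}_p$-basis of $L\otimes_{\bb{Z}_p}\bb{Q}_p$) yields a $\bb{Q}_p$-algebra whose matrix is the image of $A$ under the inclusion $gl_3(\bb{Z}_p)\subseteq gl_3(\bb{Q}_p)$. This is immediate from the bilinearity of the bracket under extension of scalars: the structure constants are unchanged. In the notation of Remark \ref{ralaq}, this says $L\otimes_{\bb{Z}_p}\bb{Q}_p \simeq L_A$ as $\bb{Q}_p$-algebras, where now $L_A$ is formed over $\bb{Q}_p$. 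Since $L$ is an unsolvable Lie lattice, $A$ is symmetric and non-degenerate (Proposition \ref{p3dimins}), and these properties persist over $\bb{Q}_p$, so $L_A$ is a 3-dimensional unsolvable $\bb{Q}_p$-Lie algebra.

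Next I would invoke the $\bb{Q}_p$-analogue of the correspondence in Remark \ref{ralaq}: the map $A\mapsto L_A$ induces a bijection between multiplicative congruence classes of symmetric non-degenerate matrices in $gl_3(\bb{Q}_p)$ and isomorphism classes of 3-dimensional unsolvable $\bb{Q}_p$-Lie algebras. Combined with Proposition \ref{pmciffeta}, which states that $A\sim_{MC}B$ over $\bb{Q}_p$ if and only if $\eta(A)=\eta(B)$, this means the isomorphism class of $L_A$ over $\bb{Q}_p$ is completely determined by $\eta(A)$. Since $sl_2(\bb{Q}_p)$ is represented by $A_0=\mr{diag}(1,1,1)$ with $\eta(A_0)=0$, and $sl_1(\bb{D}_p)$ by $A_1=\mr{diag}(1,-\rho,p)$ with $\eta(A_1)=1$ (both computed in Remark \ref{ralaq} via Lemma \ref{letainvcom}), the equivalence follows directly: $L\otimes_{\bb{Z}_p}\bb{Q}_p\simeq sl_2(\bb{Q}_p)$ iff $L_A\simeq L_{A_0}$ iff $\eta(A)=\eta(A_0)=0$, and likewise $L\otimes_{\bb{Z}_p}\bb{Q}_p\simeq sl_1(\bb{D}_p)$ iff $\eta(A)=1$.

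I do not anticipate a serious obstacle here, since all the heavy lifting has been done in the preceding results; the only point requiring minor care is checking that $\eta(A)$ is well-defined independently of the chosen basis of $L$. This is because a change of basis replaces $A$ by a congruent (indeed multiplicatively congruent) matrix over $\bb{Z}_p$, hence over $\bb{Q}_p$, and $\eta$ is a multiplicative congruence invariant by Proposition \ref{pmciffeta}; thus the hypothesis ``$A$ is the matrix of $L$ associated with some of its basis'' unambiguously determines $\eta(A)$. The mildly delicate part is simply assembling these cited facts into the correct logical chain rather than any genuine computation.
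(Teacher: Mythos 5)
Your proposal is correct and follows essentially the same route as the paper's own proof: observe that the basis $(x_0\otimes 1, x_1\otimes 1, x_2\otimes 1)$ of $L\otimes_{\bb{Z}_p}\bb{Q}_p$ has the same associated matrix $A$, then conclude via Proposition \ref{pmciffeta} and Remark \ref{ralaq} (including the computed values $\eta(A_0)=0$ and $\eta(A_1)=1$ for $sl_2(\bb{Q}_p)$ and $sl_1(\bb{D}_p)$). Your closing observation on the basis-independence of $\eta(A)$ is a harmless extra check, not a deviation in method.
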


\begin{proof}
Observe that if the given basis of $L$ is $(x_0,x_1,x_2)$
then $(x_0\otimes 1, x_1\otimes 1, x_2\otimes 1)$ 
is a basis of $L\otimes_{\bb{Z}_p}\bb{Q}_p$
whose corresponding matrix is $A$ as well.
The proposition follows from Proposition \ref{pmciffeta} and Remark \ref{ralaq}.
\end{proof}


\subsection{Main results on Lie algebras}\label{smainla}

We collect here the main results on Lie algebras. 

\begin{theorem}\label{tsoluns}
Let $p\ges 3$ be a prime, and $L$ be 
a 3-dimensional unsolvable 
$\bb{Z}_p$-Lie lattice.
Fix $\rho\in\bb{Z}_p^*$ not a square modulo $p$.
Then $L$ is isomorphic to
exactly one of the following $\bb{Z}_p$-Lie lattices:
\begin{enumerate}
\item $L_1(s_0, s_1, s_2, \vep_1, \vep_2)$ for 
$0 \les s_0 < s_1 < s_2$ and $\vep_1,\vep_2\in\{0,1\}$, presented by:
\vspace{0mm}
$$ \langle\, x_0, x_1, x_2   \mid [x_1, x_2] =  p^{s_0}x_0, \,\, 
[x_2, x_0] =  \rho^{\vep_1}p^{s_1}x_1, \,\,
[x_0, x_1] =  \rho^{\vep_2}p^{s_2}x_2 \,\rangle .$$

\item $L_2(s_0, s_2, \vep_1)$ for $0 \les s_0  < s_2$ and 
$\vep_1 \in\{0,1\}$, presented by:
\vspace{0mm}
$$ \langle\, x_0, x_1, x_2   \mid [x_1, x_2] =  p^{s_0}x_0,  \,\,
[x_2, x_0] =  -\rho^{\vep_1}p^{s_0}x_1, \,\,
[x_0, x_1] =  p^{s_2}x_2\,\rangle .$$

\item $L_3(s_0, s_1, \vep_2)$ for $0 \les s_0  < s_1$ and 
$\vep_2 \in\{0,1\}$, presented by:
\vspace{0mm}
$$ \langle\, x_0, x_1, x_2   \mid [x_1, x_2] =  p^{s_0}x_0,  \,\,
[x_2, x_0] =  p^{s_1}x_1, \,\,
[x_0, x_1] = - \rho^{\vep_2}p^{s_1}x_2\,\rangle.$$

\item $L_4(s_0)$ for  $0 \les s_0  $, presented by: 
\vspace{0mm}
$$ \langle\, x_0, x_1, x_2   \mid [x_1, x_2] =  p^{s_0}x_0,\,\,  
[x_2, x_0] =  p^{s_0}x_1,\,\, 
[x_0, x_1] =  p^{s_0}x_2\,\rangle .$$
\end{enumerate}
Moreover, 
we have:
\begin{enumerate}
\item $L_1(s_0,s_1,s_2,\vep_1,\vep_2)$ is not self-similar of index $p$.
\item $L_2(s_0,s_2,\vep_1)$
is self-similar of index $p$ if and only if
$\vep_1 = 0$.
\item $L_3(s_0,s_1,\vep_2)$ is self-similar of index $p$ if and only if
$\vep_2 = 0$.
\item $L_4(s_0)$ is self-similar of index $p$.
\end{enumerate}
\end{theorem}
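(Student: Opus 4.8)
The plan is to separate the statement into its two halves---that the four families form a complete, irredundant list, and the four self-similarity assertions---and to reduce each piece to machinery already assembled.

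For the classification I would start from Theorem \ref{tclai3d}, which already gives a complete and irredundant set of representatives of the multiplicative congruence classes of symmetric non-degenerate matrices in $gl_3(\bb{Z}_p)$, and then replace them by the canonical representatives of Remark \ref{rreci3d} (the ones carrying the two extra minus signs). By Lemma \ref{lantbrmat}(1), with respect to a fixed basis $\bm{x}=(x_0,x_1,x_2)$ a diagonal matrix $\mr{diag}(a_0,a_1,a_2)$ corresponds precisely to the relations $[x_1,x_2]=a_0x_0$, $[x_2,x_0]=a_1x_1$, $[x_0,x_1]=a_2x_2$; feeding the four canonical families of Remark \ref{rreci3d} through this dictionary reproduces verbatim the presentations of $L_1,\dots,L_4$. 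Completeness and irredundancy are then inherited from Theorem \ref{tclai3d}, together with Proposition \ref{p3dimins}(1), which identifies symmetric non-degenerate matrices with the matrices of $3$-dimensional unsolvable Lie lattices. This part is essentially bookkeeping.

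For the three non-self-similar cases I would invoke the non-self-similarity theorem. Remark \ref{rcannssc} records exactly that the canonical bases of $L_1(s_0,s_1,s_2,\vep_1,\vep_2)$, of $L_2(s_0,s_2,1)$, and of $L_3(s_0,s_1,1)$ satisfy the non-self-similarity condition of Definition \ref{dnonselfsc2}; moreover the $s$-invariants are not all equal in any of these cases, since $s_0<s_2$ for $L_1$ and $L_2$ and $s_0<s_1$ for $L_3$. Theorem \ref{tsl11noss} then yields that none of these lattices is self-similar of index $p$. This settles $L_1$ outright and supplies the ``only if'' halves of the equivalences for $L_2$ and $L_3$. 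For the matching ``if'' halves I would exhibit a basis whose matrix is of the shape handled by Lemma \ref{li3dssst1}, namely $\mr{diag}(a,p^s,-p^s)$. The canonical matrix $\mr{diag}(p^{s_0},p^{s_1},-p^{s_1})$ of $L_3(s_0,s_1,0)$ is already of this form with $a=p^{s_0}$, $s=s_1$, so Lemma \ref{li3dssst1} applies directly; the canonical matrix $\mr{diag}(p^{s_0},-p^{s_0},p^{s_2})$ of $L_2(s_0,s_2,0)$ becomes $\mr{diag}(p^{s_2},p^{s_0},-p^{s_0})$ after permuting diagonal entries by a diagonal change of basis (Remark \ref{rdiagbasis}), again of the required form.

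The one genuinely delicate point---and where I expect the main obstacle to lie---is $L_4(s_0)$, whose canonical matrix $\mr{diag}(p^{s_0},p^{s_0},p^{s_0})$ is not of the form of Lemma \ref{li3dssst1} and cannot be brought there by a permutation, since flipping a sign is a true congruence and whether $-1$ is a square depends on $p\bmod 4$. My way around this is to avoid any case split: the matrix $\mr{diag}(p^{s_0},p^{s_0},-p^{s_0})$ is symmetric and non-degenerate, hence (Proposition \ref{p3dimins}(1)) the matrix of a $3$-dimensional unsolvable Lie lattice, and all three of its $s$-invariants equal $s_0$. By Theorem \ref{tclai3d} the only isomorphism class with $s_0=s_1=s_2$ is family (4), so $\mr{diag}(p^{s_0},p^{s_0},-p^{s_0})$ is multiplicatively congruent to $\mr{diag}(p^{s_0},p^{s_0},p^{s_0})$; that is, $L_4(s_0)$ admits a basis with matrix $\mr{diag}(p^{s_0},p^{s_0},-p^{s_0})$ (the same fact is visible concretely in Example \ref{ecansl}, since $L_4(s_0)\simeq sl_2^{s_0}(\bb{Z}_p)$ and $sl_2(\bb{Z}_p)$ has a basis with matrix $\mr{diag}(1,1,-1)$). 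Lemma \ref{li3dssst1} then gives that $L_4(s_0)$ is self-similar of index $p$, completing the proof. Thus the whole difficulty concentrates in this sign-flip, and once uniqueness of the family-(4) class is used to force the desired multiplicative congruence, the argument closes uniformly in $p$.
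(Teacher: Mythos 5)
Your proposal is correct and follows essentially the same route as the paper's proof: the classification is imported from Theorem \ref{tclai3d} via the canonical forms of Remark \ref{rreci3d}, the non-self-similar cases are exactly those covered by Remark \ref{rcannssc} together with Theorem \ref{tsl11noss}, and the positive cases for families (2) and (3) are handled by Lemma \ref{li3dssst1}, after the permutation of Remark \ref{rdiagbasis} for family (2). The only local divergence is family (4): the paper produces a matrix of the shape $\mr{diag}(a,p^s,-p^s)$ explicitly, by multiplying $\mr{diag}(p^{s_0},p^{s_0},p^{s_0})$ by the unit $-1$ (a multiplicative congruence, hence an isomorphism) and then moving the sign with Remark \ref{rcassels}, whereas you deduce from the irredundancy of Theorem \ref{tclai3d} and the $s$-invariants (Remark \ref{rsinv}, Proposition \ref{p3dimins}) that $\mr{diag}(p^{s_0},p^{s_0},-p^{s_0})$ must be multiplicatively congruent to $\mr{diag}(p^{s_0},p^{s_0},p^{s_0})$. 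Both arguments are valid --- the paper's is more direct, yours avoids all sign bookkeeping --- and your fallback via Example \ref{ecansl} is also sound and non-circular, since that example relies only on the classification and not on Theorem \ref{tsoluns} itself.
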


\begin{proof}
The presentations in the statement correspond exactly to the canonical forms
given in Remark \ref{rreci3d},
which indeed represent all the 3-dimensional unsolvable
$\bb{Z}_p$-Lie lattices in a non-redundant way. We proceed in two steps.

First, we show that the lattices that are stated to be 
self-similar of index $p$ admit
the required simple virtual endomorphism of index $p$. 
For this, we apply Lemma \ref{li3dssst1}.
For family (3) and $\vep_2 =0$, the lemma can be applied directly.
For family (2) and $\vep_1=0$, 
one can apply the lemma after making a change of basis
that permutes the diagonal entries; see Remark \ref{rdiagbasis}.
For family (4), one applies 
\cite[Lemma 3.4, page 115]{CasRQF}
in order to get the form $\mr{diag}(-p^{s_0}, p^{s_0},-p^{s_0})$,
so that the lemma can be applied.

Second, from Remark \ref{rcannssc} and Theorem \ref{tsl11noss},
we see that 
the lattices that are stated not 
to be self-similar of index $p$ are indeed of such a type.
This completes the proof. 
\end{proof}

\begin{theorem}\label{tssindeta0}
Let $p\ges 3$ be a prime, and $L$ be
a 3-dimensional unsolvable $\bb{Z}_p$-Lie lattice. 
If $L\otimes_{\bb{Z}_p}\bb{Q}_p\simeq 
sl_2(\bb{Q}_p)$ then $L$ is self-similar. Moreover, Table
\ref{table1} gives the value or an estimate for the 
self-similarity index $\sigma$ of $L$. 
All the 3-dimensional unsolvable $\bb{Z}_p$-Lie lattices with 
$L\otimes_{\bb{Z}_p}\bb{Q}_p\simeq sl_2(\bb{Q}_p)$ 
are represented exactly once in the table.

\setlength\extrarowheight{7pt}

\begin{table}[h!]
\begin{center}
\begin{tabular}{ |m{0.3cm}| m{3.2cm}| m{5cm} | m{4cm}| } 
\hline
&
\multicolumn{1}{c|}{{\normalfont Presentation}} 
& 
\multicolumn{1}{c|}{$\sigma$}
& 
\multicolumn{1}{c|}{{\normalfont Conditions}}
\\\hline
\multicolumn{1}{|c|}{
1.
}
&
\multicolumn{1}{c|}{
$(p^{s_0},p^{s_0},p^{s_0})$}
&
\multicolumn{1}{c|}{$\sigma = p$}
& 
\\[10pt]\hline
\multicolumn{1}{|c|}{
2.
}
&
\multicolumn{1}{c|}{
$(p^{s_0},p^{s_1},-p^{s_1})$} 
&
\multicolumn{1}{c|}{$\sigma = p$}
& 
\\[10pt]\hline
\multicolumn{1}{|c|}{
3.
}
&
\multicolumn{1}{c|}{
$(p^{s_0},p^{s_1},-\rho p^{s_1})$}
&
\multicolumn{1}{c|}{
$p^2\les \sigma \les p^{s_1-s_0 + 1}$
}
& 
\multicolumn{1}{c|}{
$s_0\equiv_2 s_1 $
}
\\[10pt]\hline
\multicolumn{1}{|c|}{
4.
}
&
\multicolumn{1}{c|}{
$(p^{s_0},-p^{s_0},p^{s_2})$ }
&
\multicolumn{1}{c|}{
$\sigma = p$
}
& 
\\[10pt]\hline
\multicolumn{1}{|c|}{
5.
}
&
\multicolumn{1}{c|}{
$(p^{s_0},-\rho p^{s_0},p^{s_2})$ }
&
\multicolumn{1}{c|}{
$p^2\les \sigma \les p^{\frac{s_2-s_0}{2} + 1}$
}
& 
\multicolumn{1}{c|}{
$s_0\equiv_2 s_2 $
}
\\[10pt]\hline
\multicolumn{1}{|c|}{
6.
}
&
\multicolumn{1}{c|}{
$(p^{s_0},\rho^{\vep_1}p^{s_1},\rho^{\vep_2}p^{s_2})$}
&
\multicolumn{1}{c|}{
$p^2\les \sigma \les p^{\frac{s_1-s_0}{2}+\frac{s_2-s_0}{2} + 1}$
}
&
\multicolumn{1}{c|}{
$s_0\equiv_2 s_1\equiv_2 s_2$
}
\\[10pt]\hline
\multicolumn{1}{|c|}{
7.
}
&
\multicolumn{1}{c|}{
$(p^{s_0},\rho^{\vep_1}p^{s_1},\rho^{\vep_2}p^{s_2})$}
&
\multicolumn{1}{c|}{
$p^2\les \sigma \les p^{\frac{s_1-s_0}{2} + 1}$
}
&
\multicolumn{1}{c|}{
$
\begin{array}{l}
s_0\equiv_2 s_1\not \equiv_2 s_2\\[-5pt]
\vep_1+\delta_p \equiv_2 0
\end{array}
$
}
\\[20pt]\hline
\multicolumn{1}{|c|}{
8.
}
&
\multicolumn{1}{c|}{
$(p^{s_0},\rho^{\vep_1}p^{s_1},\rho^{\vep_2}p^{s_2})$}
&
\multicolumn{1}{c|}{
$p^2\les \sigma \les p^{\frac{s_2-s_1}{2} + 1}$
}
&
\multicolumn{1}{c|}{
$
\begin{array}{l}
s_1\equiv_2 s_2\not \equiv_2 s_0\\[-5pt]
\vep_1+\vep_2+\delta_p \equiv_2 0
\end{array}
$
}
\\[20pt]\hline
\multicolumn{1}{|c|}{
9.
}
&
\multicolumn{1}{c|}{
$(p^{s_0},\rho^{\vep_1}p^{s_1},\rho^{\vep_2}p^{s_2})$}
&
\multicolumn{1}{c|}{
$p^2\les \sigma \les p^{\frac{s_2-s_0}{2} + 1}$
}
&
\multicolumn{1}{c|}{
$
\begin{array}{l}
s_0\equiv_2 s_2\not \equiv_2 s_1\\[-5pt]
\vep_2+\delta_p \equiv_2 0
\end{array}
$
}
\\[20pt]\hline
\end{tabular}
\end{center}
\caption{
\textit{Estimates for the self-similarity index
as in Theorem \ref{tssindeta0}.
In the table, $\rho \in \bb{Z}_p^*$ is a fixed non-square modulo $p$,
and $\delta_p \equiv_2 (p-1)/2$,
where the symbol $\equiv_2$ denotes congruence modulo 2.
The variables appearing in each line of the table 
(i.e., the parameters of the classification) take values:
$s_0,s_1,s_2\in\bb{N}$ with $s_0<s_1<s_2$ and $\vep_1,\vep_2\in\{0,1\}$.
The first column of the table gives the diagonal entries of 
the (diagonal) matrix associated with the canonical presentation
of $L$ as in Theorem \ref{tsoluns}. 
The second column gives the value of $\sigma$ or an estimate for it.
The third column gives conditions on the parameters in order the
given estimate to be true.
}
}

\label{table1}
\end{table}
\end{theorem}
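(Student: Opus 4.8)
The plan is to treat, row by row, the lower and upper estimates on $\sigma$ separately, organizing the upper bounds around a single rescaling device and the lower bounds around two short arguments. Throughout I fix a canonical basis as in Remark \ref{rreci3d}, so that $A=\mr{diag}(u_0p^{s_0},u_1p^{s_1},u_2p^{s_2})$ with $u_i\in\{1,-1,\pm\rho\}$, and I use that $L\otimes_{\bb{Z}_p}\bb{Q}_p\simeq sl_2(\bb{Q}_p)$ is equivalent to $\eta(A)=0$ (Proposition \ref{pextsca}). First I would dispose of the lower bounds. No lattice $L$ as in the statement is self-similar of index $1$: for any endomorphism $\varphi\colon L\to L$ the submodule $pL$ is a non-trivial ideal with $\varphi(pL)=p\,\varphi(L)\subseteq pL$, hence a non-trivial $\varphi$-invariant ideal, so $\sigma(L)\ge p$ in all cases. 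For rows $1,2,4$ this matches the upper bound $\sigma(L)\le p$ obtained below, giving $\sigma(L)=p$. For rows $3,5,6,7,8,9$ I would check, using Lemma \ref{letainvcom}, that the parity and $\vep$-conditions listed in the table are exactly the instances of $\eta(A)=0$ occurring in families (1)--(3) with a nontrivial $\vep$; since every such lattice is among those declared \emph{not} self-similar of index $p$ in Theorem \ref{tsoluns}, we get $\sigma(L)\ge p^2$ there.

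For the upper bounds I would use a single construction: a diagonal rescaling $M=\langle p^{m_0}x_0,p^{m_1}x_1,p^{m_2}x_2\rangle$ of the canonical basis. By Equation~(\ref{ebchbasis}) of Lemma \ref{lantbrmat}, the matrix of $M$ is $\mr{diag}\bigl(u_ip^{\,s_i+\mu-2m_i}\bigr)_i$ with $\mu=m_0+m_1+m_2$, so $[L:M]=p^{\mu}$ and the $i$-th $s$-invariant becomes $s_i+\mu-2m_i$. Choosing the $m_i$ so as to equalize the entries whose $s$-invariants share the common parity dictated by the row, I obtain: for rows $3,5,6$ a subalgebra $M$ with all three $s$-invariants equal; for rows $7,8,9$ a subalgebra $M$ in which only the two same-parity $s$-invariants are equalized while the third is left alone. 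The integrality of the required $m_i$ is guaranteed precisely by the parity conditions of the table, and the resulting $\mu$ equals $(s_1-s_0)/2+(s_2-s_0)/2$, $(s_2-s_0)/2$, $(s_1-s_0)/2$, etc., so that $p^{\mu+1}$ reproduces the tabulated upper bounds.

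It then remains to see that each such $M$ is self-similar of index $p$, after which the evident Lie-lattice analogue of Corollary \ref{cHssthenGss} (same proof: extend to $L$ the codomain of a simple virtual endomorphism of $M$) gives $\sigma(L)\le p\cdot[L:M]=p^{\mu+1}$. When all three $s$-invariants of $M$ coincide (rows $3,5,6$), $M$ is a scalar multiple of a unimodular form and, exactly as for family (4) in the proof of Theorem \ref{tsoluns} (via Remark \ref{rcassels}), its matrix is multiplicatively congruent to one of the shape $\mr{diag}(a,p^s,-p^s)$, so Lemma \ref{li3dssst1} applies. When exactly two $s$-invariants of $M$ coincide (rows $7,8,9$), $M$ is of family (2) or (3) type; here the repeated and the solitary $s$-invariants have opposite parity, so their sum is odd and, by Lemma \ref{letainvcom}, the $\eta$-invariant detects the canonical sign $\vep^{(M)}$ of $M$. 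Since $M\otimes_{\bb{Z}_p}\bb{Q}_p\simeq L\otimes_{\bb{Z}_p}\bb{Q}_p$ forces $\eta(M)=\eta(A)=0$ (Proposition \ref{pmciffeta}), we conclude $\vep^{(M)}=0$, i.e.\ $M$ is the self-similar-of-index-$p$ member of its family (Theorem \ref{tsoluns}). In either case $M$ is self-similar of index $p$, completing every upper bound.

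The routine part is the matrix computation behind the rescaling; the genuine obstacle is the last paragraph, namely certifying that the rescaled $M$ lands in the self-similar-of-index-$p$ congruence class rather than its non-self-similar partner. This is exactly where the side conditions of the table are consumed twice: they secure the integrality of the rescaling and, through the $\eta$-invariant, the vanishing of $\vep^{(M)}$. I would finish by confirming that $\mu\ge 1$ throughout rows $3,5,6,7,8,9$, so that $p^{\mu+1}\ge p^2$ is consistent with the lower bound $\sigma(L)\ge p^2$, and that rows $1,2,4$ are pinned to $\sigma=p$; this matches Table \ref{table1} entry by entry.
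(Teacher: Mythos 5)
Your proposal is correct, and its skeleton coincides with the paper's proof: both identify the table rows with the $\eta=0$ canonical forms via Lemma \ref{letainvcom} and Proposition \ref{pextsca}, obtain the lower bounds $p^2\les\sigma$ from the non-self-similarity clauses of Theorem \ref{tsoluns}, and prove each upper bound by rescaling the canonical basis to a finite-index subalgebra $M$ that is self-similar of index $p$ (your exponents $m_i$ are exactly the paper's $k_i$, line by line) and then applying the subalgebra lemma (the paper's Lemma \ref{lsubss}, which you re-derive). The one genuine difference is how you certify that $M$ is self-similar of index $p$ in rows 7--9: the paper brings the matrix of $M$ into canonical form by explicit congruence manipulations --- Remark \ref{rcassels}, writing $-1=\rho^{\delta_p}u^2$, consuming the table's $\vep$-condition directly, as in its worked-out Line 9 --- whereas you argue indirectly that $M\otimes_{\bb{Z}_p}\bb{Q}_p\simeq L\otimes_{\bb{Z}_p}\bb{Q}_p$ forces $\eta(M)=0$, and since the repeated and solitary $s$-invariants of $M$ have opposite parity, Lemma \ref{letainvcom} makes $\eta$ equal to the canonical sign $\vep^{(M)}$, so $\vep^{(M)}=0$ and Theorem \ref{tsoluns} finishes. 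Your route is slicker and less error-prone, at the cost of leaning on the completeness and irredundancy of the classification and on $\eta$ being an isomorphism invariant (Proposition \ref{pmciffeta}); the same remark applies to rows 3, 5, 6, where ``all $s$-invariants equal'' already pins $M$ down as $L_4$, so the detour through $\mr{diag}(a,p^s,-p^s)$ and Lemma \ref{li3dssst1} could be replaced by a direct appeal to Theorem \ref{tsoluns}(4). You also explicitly rule out index $1$ via the $\varphi$-invariant ideal $pL$, a point the paper leaves implicit. Two harmless imprecisions: rows 6--9 are not characterized by a ``nontrivial $\vep$'' (row 6 allows $\vep_1=\vep_2=0$; family (1) is never self-similar of index $p$ regardless of the signs), and for rows 3, 5, 6 the unimodular part of $M$ need not be $(1,1,1)$ on the nose, so one extra normalization of units is needed before quoting the family-(4) computation; neither affects the argument.
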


\begin{proof}
Observe that one may use Lemma \ref{letainvcom}
in order to compute the $\eta$-invariant of the matrices corresponding
to the canonical presentations (appearing, for 
instance, in Theorem \ref{tsoluns})
of all the 3-dimensional unsolvable $\bb{Z}_p$-Lie lattices. 
One can check that the canonical
presentations appearing in the table are exactly the ones
for which $\eta = 0$. It follows (Proposition \ref{pextsca}) 
that all the 3-dimensional unsolvable Lie lattices
with $L\otimes_{\bb{Z}_p} \bb{Q}_p\simeq sl_2(\bb{Q}_p)$ are represented exactly once.

Observe that lines 1, 2 and 4 of the table correspond 
exactly to the index-$p$ self-similar lattices respectively of cases
4, 3 and 2 of Theorem \ref{tsoluns}.
These are exactly the cases where $L$
is self-similar of index $p$, so that the given value of $\sigma$ is correct,
and the estimate
$p^2\les \sigma$ of the other lines of the table is correct as well.

It remains to prove the validity of the given upper bounds for
$\sigma$.
Here the idea is to exhibit a
3-dimensional subalgebra $M$ of $L$ that is self-similar of
index $p$. Applying Lemma \ref{lsubss}, we get that $L$
is self-similar of index $p[L:M]$.
We proceed as follows.
Let $(x_0,x_1,x_2)$ be a basis of $L$ that puts $L$ in its canonical form.
We denote by $A=\mr{diag}(a_0,a_1,a_2)$ the corresponding matrix.
In each case, we define $M = \langle p^{k_0}x_0,p^{k_1}x_1, p^{k_2}x_2 \rangle$,
where the exponents $k_0,k_1,k_2\in\bb{N}$ are given below case by case. 
Observe that, for any choice of these
exponents, the matrix $B$ of formula (\ref{ebchbasis}) of Lemma \ref{lantbrmat}
(where $U=\mr{diag}(p^{k_0},p^{k_1}, p^{k_2})$)
is 
$$B= \mr{diag}(a_0p^{-k_0+k_1+k_2},a_1p^{k_0-k_1+k_2},a_2p^{k_0+k_1-k_2}).$$
For line 9 of the table we make the full proof
while, for the other relevant lines, we just give the exponents, leaving 
the details to the reader.

\vspace{5pt}
\noindent
\textit{Line 3.} Take $(k_0,k_1,k_2)=
\left(0,\frac{s_1-s_0}{2},\frac{s_1-s_0}{2}\right)$.

\vspace{5pt}

\noindent
\textit{Line 5.} Take $(k_0,k_1,k_2)=
\left(0,0,\frac{s_2-s_0}{2}\right)$.

\vspace{5pt}

\noindent
\textit{Line 6.} Take $(k_0,k_1,k_2)=
\left(0,\frac{s_1-s_0}{2},\frac{s_2-s_0}{2}\right)$.

\vspace{5pt}

\noindent
\textit{Line 7.} Take $(k_0,k_1,k_2)=
\left(0,\frac{s_1-s_0}{2},0\right)$.

\vspace{5pt}

\noindent
\textit{Line 8.} Take $(k_0,k_1,k_2)=
\left(0,0, \frac{s_2-s_1}{2}\right)$.

\vspace{5pt}

\noindent
\textit{Line 9.} Take $(k_0,k_1,k_2)=
\left(0,0, \frac{s_2-s_0}{2}\right)$.
Define $l:=\frac{s_2-s_0}{2}$ and observe
that $l\ges 1$ is an integer.
For the corresponding $M$ (a priori just a submodule of $L$)
we have $[L:M]=p^l$. We have to show that $M$ is a subalgebra
of $L$ and that $M$ is self-similar of index $p$.
One computes
$B = \mr{diag}(p^{s_0 + l}, \rho^{\vep_1}p^{s_1+l},\rho^{\vep_2}p^{s_0 + l})$.
The entries of $B$ are in $\bb{Z}_p$, so that $M$
is a subalgebra of $L$. We will show that 
the canonical matrix of $L$ is 
$\mr{diag}(p^{s_0+l}, -p^{s_0+l}, p^{s_1+l})$,
which implies that $M$ is self-similar of index $p$ (Theorem \ref{tsoluns}).
Indeed, we can permute the diagonal 
entries of $B$, multiply them by $\rho^{\vep_1}$
(see Remark \ref{rdiagbasis}) and ``eliminate'' a square (through a congruence), 
getting $\mr{diag}(\rho^{\vep_1}p^{s_0+l}, \rho^{\vep_1+\vep_2}p^{s_0+l},
p^{s_1+l})$ as the matrix of $M$ with respect to some basis.
With other changes of basis we can first ``move'' the factor $\rho^{\vep_1}$
of the first diagonal entry to the second (cf. \cite[Lemma 3.4, page 115]{CasRQF}), 
and then eliminate the resulting square $\rho^{2\vep_1}$.
By writing $-1 =\rho^{\delta_p}u^2$ with $u\in\bb{Z}_p^*$ and 
eliminating $u^2$, we get
$\mr{diag}(p^{s_0+l}, - \rho^{\vep_2+\delta_p}p^{s_0+l},
p^{s_1+l})$. Since $\vep_2+\delta_p\equiv_2 0$,
possibly eliminating another square, 
we arrive at the desired presentation, 
and the proof of this case is complete.
\end{proof}

\begin{theorem} \label{tsl2}
Let $p\ges 3$ be a prime.
The following holds.
\begin{enumerate}
\item The $\bb{Z}_p$-Lie lattice $sl_2(\bb{Z}_p)$ and its congruence subalgebras
$sl_2^k(\bb{Z}_p)= p^ksl_2(\bb{Z}_p)$, $k\ges 1$, are self-similar of index $p$. 
\item 
\begin{enumerate} 
\item The $\bb{Z}_p$-Lie lattice $sl_2^{\sylow}(\bb{Z}_p)$ and the terms 
$\gamma_k(sl_2^{\sylow}(\bb{Z}_p))$, $k\ges 1$, of its
lower central series are self-similar of index $p$.
\item If $I\subseteq L$ is a non-zero ideal of $sl_2^{\sylow}(\bb{Z}_p)$
then $I$ has dimension 3 and it is self-similar of index $p$ or $p^2$.
\end{enumerate}
\end{enumerate}
\end{theorem}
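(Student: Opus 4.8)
The plan is, throughout, to read off the canonical matrix of each lattice from Example~\ref{ecansl} and to recognise it in the classification of Theorem~\ref{tsoluns}. For part (1) the lattices $sl_2^k(\bb{Z}_p)=p^ksl_2(\bb{Z}_p)$ (with $k=0$ giving $sl_2(\bb{Z}_p)$) have canonical matrix $\mr{diag}(p^k,p^k,p^k)$, i.e.\ they are the lattices $L_4(k)$, which are self-similar of index $p$ by Theorem~\ref{tsoluns}(4). For part (2)(a), $sl_2^{\sylow}(\bb{Z}_p)$ has matrix $\mr{diag}(1,p,-p)=L_3(0,1,0)$, and by Example~\ref{ecansl} the terms of its lower central series have matrices $\mr{diag}(p^k,p^{k+1},-p^{k+1})=L_3(k,k+1,0)$ for $\gamma_{2k}$ and $\mr{diag}(p^{k+1},-p^{k+1},p^{k+2})=L_2(k+1,k+2,0)$ for $\gamma_{2k+1}$. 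All of these have $\vep=0$, so Theorem~\ref{tsoluns}(2),(3) gives self-similarity of index $p$.

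For part (2)(b), I would first note that $sl_2^{\sylow}(\bb{Z}_p)$ is hereditarily just infinite for $p\ges 3$ (Proposition~\ref{p3dimins}(2)(b)), hence just infinite, so every non-zero ideal $I$ has dimension $3$. Being a full-rank sublattice, $I\otimes_{\bb{Z}_p}\bb{Q}_p=sl_2^{\sylow}(\bb{Z}_p)\otimes_{\bb{Z}_p}\bb{Q}_p\simeq sl_2(\bb{Q}_p)$, so $\eta=0$ (Proposition~\ref{pextsca}) and $I$ appears in Table~\ref{table1}. The whole statement then reduces to determining the possible $s$-invariant triples $s_0\les s_1\les s_2$ of ideals and reading off $\sigma$ from the table; the point to be proven is that $\sigma\les p^2$, which I expect to be the main obstacle.

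The key step is the classification of these triples, which I would carry out by reducing modulo $p$. In the canonical basis $(z_0,z_1,z_2)$ with $[z_1,z_2]=z_0$, $[z_2,z_0]=pz_1$, $[z_0,z_1]=-pz_2$, the quotient $\bar L=L/pL$ is the Heisenberg algebra over $\bb{F}_p$ with centre $\langle\bar z_0\rangle$ and $[\bar z_1,\bar z_2]=\bar z_0$. Using the dilation $I\mapsto p^{-j}I$ (largest $j$ with $I\subseteq p^jL$), which shifts each $s$-invariant by $-j$ by formula~(\ref{ebchbasis}), I may assume $I\not\subseteq pL$, so $\bar I\subseteq\bar L$ is a non-zero ideal of the Heisenberg algebra with $\dim\bar I\in\{1,2,3\}$. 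I would then treat the three cases: $\dim\bar I=3$ forces $I=L$ (Nakayama), with triple $(0,1,1)$; $\dim\bar I=1$ forces $\bar I=\langle\bar z_0\rangle$, and applying $\mr{ad}(z_1),\mr{ad}(z_2)$ to a generator $z_0+pw\in I$ produces the index-$p^2$ sublattice $[L,L]\subseteq I$, while $\bar I=\langle\bar z_0\rangle$ gives $I\subseteq[L,L]$, whence $I=[L,L]$ with triple $(1,1,2)$; and $\dim\bar I=2$, where a bracket argument shows $I$ already contains $[L,L]$ together with a non-central vector, forcing index $p$, so that $[L,L]\subseteq I\subseteq L$ is one of the $p+1$ intermediate ideals. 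For the latter a direct computation of the matrix~(\ref{ebchbasis}) and its Smith form yields triple $(0,1,2)$ for all directions except the two ``diagonalising'' ones $z_1\pm z_2$, which give $(1,1,1)$. Undoing the dilation, the complete list of $s$-invariant triples of ideals is $(s_0,s_0,s_0)$, $(s_0,s_0,s_0+1)$, $(s_0,s_0+1,s_0+1)$ and $(s_0,s_0+1,s_0+2)$.

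Finally I would match each triple against Table~\ref{table1}: the first three are of types $L_4$, $L_2(\,\cdot\,,\cdot,0)$ and $L_3(\,\cdot\,,\cdot,0)$ (lines 1, 4, 2), hence self-similar of index $p$; the last is of type $L_1$ with $s_0\equiv_2 s_2\not\equiv_2 s_1$ (line 9), where $s_2-s_0=2$ makes the estimate $p^2\les\sigma\les p^{(s_2-s_0)/2+1}$ collapse to $\sigma=p^2$. Hence every non-zero ideal is self-similar of index $p$ or $p^2$. The delicate points I anticipate are the uniform shift of $s$-invariants under the dilation (immediate from~(\ref{ebchbasis})) and, in the $2$-dimensional case, computing the Smith form of the bracket matrix carefully enough to separate the two exceptional directions (giving $\sigma=p$) from the generic ones (giving $\sigma=p^2$).
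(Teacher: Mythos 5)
Your proposal is correct. Parts (1) and (2)(a) are handled exactly as in the paper: read the canonical matrices off Example \ref{ecansl} and invoke Theorem \ref{tsoluns}. For part (2)(b), however, your route is genuinely different. The paper never classifies the ideals: it observes that $\gamma_{2m}(L)=p^mL$ (Lemma \ref{lgenGnL}), so by Lemma \ref{lsubmpkl} every non-zero ideal $I\neq L$ contains some $\gamma_k(L)$ with $k\ges 1$ minimal, and then Proposition \ref{pinsresnil2}(2) sandwiches it, $\gamma_k(L)\subseteq I\subset \gamma_{k-1}(L)$; since $[\gamma_{k-1}(L):\gamma_k(L)]\in\{p,p^2\}$ this gives $[I:\gamma_k(L)]\in\{1,p\}$, and Lemma \ref{lsubss} applied to $\gamma_k(L)\subseteq I$ (self-similar of index $p$ by part (2)(a)) finishes. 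You instead classify all non-zero ideals: normalize by the dilation $I\mapsto p^{-j}I$, reduce modulo $p$ to the Heisenberg algebra over $\bb{F}_p$, split into $\mr{dim}\,\bar I\in\{1,2,3\}$, and compute Smith forms. I checked the computations this requires and they do close up: in the case $\mr{dim}\,\bar I=1$, the three elements $z_0+pw$, $[z_1,z_0+pw]$, $[z_2,z_0+pw]$ span a submodule whose determinant has valuation exactly $2$, hence equal to $[L,L]$, so $I=[L,L]$; in the case $\mr{dim}\,\bar I=2$ the same argument plus one extra vector yields the $p+1$ ideals $[L,L]+\langle bz_1+cz_2\rangle$, whose bracket matrices have Smith valuations $(0,1,2)$ for $b\not\equiv\pm c$ and $(1,1,1)$ for $b\equiv\pm c \pmod{p}$; and the table-matching is legitimate because $\eta(I)=0$ (Proposition \ref{pextsca}) forces $\vep_1=0$, resp.\ $\vep_2=0$, in families 2 and 3 (the relevant sums of $s$-invariants being odd) and forces the condition $\vep_2+\delta_p\equiv_2 0$ of line 9 of Table \ref{table1} in family 1, where $s_2-s_0=2$ collapses the estimate to $\sigma=p^2$. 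The trade-off: the paper's argument is much shorter and computation-free, but it only bounds the index and does not say which ideals realize which value; yours yields strictly more, namely a complete isomorphism classification of the ideals of $sl_2^{\sylow}(\bb{Z}_p)$ together with exact self-similarity indices --- in particular that the $p-1$ ``generic'' index-$p$ ideals are of type $L_1$, hence not self-similar of index $p$ and with $\sigma$ exactly $p^2$, while the two exceptional ones (one of which is $sl_2^1(\bb{Z}_p)$) and all the $p^j\gamma_k(L)$ have $\sigma=p$.
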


\begin{proof} First,
the canonical forms of $sl_2(\bb{Z}_p)$, $sl_2^k(\bb{Z}_p)$, 
$sl_2^{\sylow}(\bb{Z}_p)$ and $\gamma_k(sl_2^{\sylow}(\bb{Z}_p))$
have been computed in Example \ref{ecansl}. 
According to Theorem \ref{tsoluns}, they all
correspond to lattices that are self-similar of index $p$.

Second, since $L$ is a 3-dimensional unsolvable Lie lattice
then any non-zero ideal $I$ of $L$ is 3-dimensional (item (\ref{l3dimhji2}) 
of Proposition \ref{p3dimins}).
If $I = L$ then $I$ is self-similar of index $p$. Assume $I\neq L$.
Since $\gamma_{2m}(L) = p^mL$ for all $m\ges 0$ (Lemma \ref{lgenGnL}),
there exists $k\in \bb{N}$ 
such that $\gamma_{k}(L)\subseteq I$ 
(where $I\neq L$ implies that $k\ges 1$). Taking the least such $k$,
we have $\gamma_{k-1}(L)\not\subseteq I$, so that 
by Proposition \ref{pinsresnil2} we have $I \subseteq \gamma_{k-1}(L)$.
By minimality of $k$, we end up with $\gamma_k(L)\subseteq I\subset \gamma_{k-1}(L)$,
where $\subset$ denotes strict inclusion.
Observe that $[\gamma_{k-1}(L):\gamma_{k}(L)]$ is $p$
if $k$ is even and it is $p^2$ if $k$ is odd.
In any case $[I:\gamma_{k}(L)]$ is $1$ or $p$.
Since $\gamma_k(L)$ is self-similar of index $p$ then
$I$ is self-similar of index $p$ or $p^2$ (Lemma \ref{lsubss}).
\end{proof}

\begin{theorem}\label{tsl1tnss}
Let $p\ges 3$ be a prime, and $L$ be a 3-dimensional unsolvable 
$\bb{Z}_p$-Lie lattice. 
Suppose that $L\otimes_{\bb{Z}_p}\bb{Q}_p \simeq sl_1(\bb{D}_p)$.
Then $L$ is not self-similar of index $p$.
\end{theorem}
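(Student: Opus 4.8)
The plan is to deduce this from the non-self-similarity theorem (Theorem \ref{tsl11noss}), after translating the $\bb{Q}_p$-isomorphism type of $L$ into data attached to a well diagonalizing basis. First I would fix such a basis $\bm{x}$: since $p\ges 3$ and $L$ is a 3-dimensional unsolvable Lie lattice, Proposition \ref{p3dimins} guarantees its existence. Write the associated matrix as $A=\mr{diag}(u_0p^{s_0},u_1p^{s_1},u_2p^{s_2})$ with $u_i\in\bb{Z}_p^*$ and $s_0\les s_1\les s_2$ the $s$-invariants of $L$. By Proposition \ref{pextsca}, the hypothesis $L\otimes_{\bb{Z}_p}\bb{Q}_p\simeq sl_1(\bb{D}_p)$ is precisely $\eta(A)=1$. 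The whole problem thus reduces to a combinatorial check: it suffices to show that $\bm{x}$ satisfies the non-self-similarity condition (Definition \ref{dnonselfsc2}) and that $s_0<s_2$, since Theorem \ref{tsl11noss} then yields the conclusion.

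Next I would unpack $\eta(A)=1$ via Lemma \ref{letainvcom}(1): it asserts exactly that two of the $s$-invariants share a parity, the third has the opposite parity, and $-u_iu_j$ is a non-square modulo $p$, where $\{i,j\}$ is the matching pair. In particular the three $s$-invariants are not all of the same parity, hence not all equal, so $s_0<s_2$; this already secures the parity hypothesis of Theorem \ref{tsl11noss}.

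The core step is then to verify the non-self-similarity condition for $\bm{x}$ by feeding this information into Lemma \ref{lstrongssc}, splitting into cases according to the coincidences among $s_0\les s_1\les s_2$. If all three are distinct, i.e. $s_0<s_1<s_2$, then hypothesis (1) of Lemma \ref{lstrongssc} applies at once. If $s_0=s_1<s_2$, then, since $\eta=1$ forces $s_2$ to have the opposite parity, the matching pair must be $\{0,1\}$, so $-u_0u_1$ is a non-square and hypothesis (2) applies. Symmetrically, if $s_0<s_1=s_2$ the matching pair is $\{1,2\}$, $-u_1u_2$ is a non-square, and hypothesis (3) applies. The case $s_0=s_1=s_2$ is excluded, as it would give $\eta=0$. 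In every case Lemma \ref{lstrongssc} shows $\bm{x}$ satisfies the non-self-similarity condition, and Theorem \ref{tsl11noss} completes the proof.

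The argument is essentially mechanical once the machinery is in place; the only point requiring care --- and the step I would treat as the main obstacle --- is the bookkeeping that matches the parity-and-non-square data encoded in $\eta(A)=1$ with the precise hypotheses of Lemma \ref{lstrongssc} when two $s$-invariants coincide, ensuring that the non-square appearing there is exactly $-u_0u_1$ (resp.\ $-u_1u_2$). As a cross-check, one could instead run through the explicit classification of Theorem \ref{tsoluns}, using Lemma \ref{letainvcom}(2) to confirm that the canonical forms with $\eta=1$ are exactly $L_1$ together with the $\vep=1$ members of families $L_2$ and $L_3$, all of which are recorded there as not self-similar of index $p$.
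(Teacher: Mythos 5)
Your proposal is correct, and its main line is a genuinely different organization of the argument from the paper's. The paper's own proof is exactly what you relegate to a ``cross-check'': it passes to the canonical presentation of $L$ from Theorem \ref{tsoluns}, computes $\eta$ of the canonical matrix via the explicit formulas of Lemma \ref{letainvcom}(2), and then observes case by case that $\eta=1$ forces the canonical form to lie in family 1, or in families 2 and 3 with $\vep_1=1$, resp.\ $\vep_2=1$ --- precisely the families already recorded in Theorem \ref{tsoluns} as not self-similar of index $p$. Your main argument instead bypasses the classification: you take an arbitrary well diagonalizing basis (Proposition \ref{p3dimins}), use the abstract parity/non-square characterization of $\eta(A)=1$ from Lemma \ref{letainvcom}(1), and feed it directly into Lemma \ref{lstrongssc} and Theorem \ref{tsl11noss}. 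Since the negative statements of Theorem \ref{tsoluns} are themselves proved via Remark \ref{rcannssc} (i.e.\ Lemma \ref{lstrongssc}) together with Theorem \ref{tsl11noss}, the two proofs rest on the same pillars once unwound; what yours buys is independence from the completeness and irredundancy of the classification, at the cost of redoing the parity bookkeeping that the canonical forms package away. That bookkeeping is done correctly: when $s_0=s_1<s_2$ the matched pair in Lemma \ref{letainvcom}(1) can only be $\{0,1\}$ (either other choice would force $s_0\not\equiv s_1$, contradicting $s_0=s_1$), and symmetrically for $s_0<s_1=s_2$, so the non-square is indeed $-u_0u_1$, resp.\ $-u_1u_2$, as Lemma \ref{lstrongssc} requires; and $s_0=s_1=s_2$ is excluded since it gives $\eta=0$, which also yields the hypothesis $s_0<s_2$ of Theorem \ref{tsl11noss} in all remaining cases.
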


\begin{proof}
Let $A$ be the matrix of $L$ associated with the canonical presentation
given in Theorem \ref{tsoluns}.
From Proposition \ref{pextsca}, 
we see that $\eta(A)=1$. From Lemma \ref{letainvcom}, 
the value of $\eta(A)$ can be explicitly computed.
We have to show that the canonical presentation of $L$ belongs
to one of the families that is proven to be non self-similar of index $p$
in Theorem \ref{tsoluns}.
If $s_0=s_1=s_2$ then $\eta(A)=0$, a contradiction.
If $s_0=s_1<s_2$ then $\eta(A)=1$ implies $\vep_1(s_0+s_2)\equiv 1$ (mod 2), 
so that $\vep_1 = 1$, as desired.
If $s_0<s_1 = s_2$ then 
$\vep_2(s_0+s_1)\equiv 1$ (mod 2), 
so that $\vep_2 = 1$, as desired.
Finally, if $s_0<s_1<s_2$ then there is nothing to prove, since no such $L$ is
self-similar of index $p$.
\end{proof}

\vspace{5mm}
We expect a stronger version of Theorem \ref{tsl1tnss} to be true.

\begin{conjecture}\label{conjsl1alg}
Let $p\ges 3$ be a prime, and $L$ be a 3-dimensional unsolvable $\bb{Z}_p$-Lie lattice. 
If $L\otimes_{\bb{Z}_p}\bb{Q}_p \simeq sl_1(\bb{D}_p)$
then $L$ is not self-similar.
\end{conjecture}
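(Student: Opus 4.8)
The plan is to first reduce the conjecture to a single lattice, and then attack that lattice by generalizing the index-$p$ obstruction of Theorem \ref{tsl1tnss}. By the contrapositive of Lemma \ref{lsubss}, if a fixed lattice $L_0$ is not self-similar of any index, then neither is any finite-index subalgebra of $L_0$. Since $sl_1(\Delta_p) = L_2(0,1,1)$ is the $\eta$-invariant-$1$ lattice of smallest determinant valuation, I would take $L_0 = sl_1(\Delta_p)$ and first prove that every $3$-dimensional unsolvable lattice $L$ with $L\otimes_{\bb{Z}_p}\bb{Q}_p\simeq sl_1(\bb{D}_p)$ (equivalently $\eta = 1$, by Proposition \ref{pextsca}) embeds as a finite-index subalgebra of $sl_1(\Delta_p)$. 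This I would establish by iterated index-$p$ descents, navigating the $s$-invariants with Lemma \ref{lxi012} and tracking the full congruence class with Lemma \ref{lscip}; the delicate point is realizing the two $\vep$-parameters of family $L_1$. This is the Lie-lattice shadow of the group-level reduction to $SL_1^1(\Delta_p)$ recorded in the introduction, so I expect it to go through.

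For the core statement --- that $L := sl_1(\Delta_p)$ is not self-similar of any index $p^k$ --- suppose for contradiction that $\varphi\colon M\ra L$ is a simple virtual endomorphism of index $p^k$. Because $L$ is hereditarily just infinite (Proposition \ref{p3dimins}) a simple virtual endomorphism of it is injective (Lemma \ref{lsimtinj}), so $\varphi$ identifies $M$ with $M' := \varphi(M)$, and Proposition \ref{pisotind} forces $[L:M] = [L:M'] = p^k$. Mirroring the proof of Theorem \ref{tsl11noss}, the goal is then to exhibit an explicit nonzero $\varphi$-invariant ideal of $L$, which contradicts simplicity.

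The mechanism I would try to generalize is the one behind Theorem \ref{tsl11noss}. There, index-$p$ sublattices split into three types $\Xi_0,\Xi_1,\Xi_2$ (Definition \ref{dzxiuxi}) obeying the rigid relation $[M,M] + p^{s_i}M = p[L,L] + p^{s_i}L$ (Lemma \ref{lfrgen}), which holds precisely because $L$ satisfies the non-self-similarity condition (Definition \ref{dnonselfsc2}) --- the combinatorial face of anisotropy, by Lemma \ref{lstrongssc}. An isomorphism $M\cong M'$ then matches the types and pins down a common preserved ideal. For index $p^k$ I would parametrize index-$p^k$ sublattices by Hermite/Smith data extending $\Xi$, prove an anisotropic rigidity relation attaching a canonical ideal to each sublattice, and show that $\varphi$, being a Lie isomorphism onto $M'$, must respect it.

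The hard part will be exactly this last generalization, which is why the statement is only a conjecture. For index $p^k$ the sublattice combinatorics is unbounded and the clean three-type dichotomy dissolves: a tower $L\supset\cdots\supset M$ of index-$p$ steps records only a non-canonical ``word'' of types (one must re-diagonalize at each step, with extra parity subtleties when $p\equiv 3$), so $M\cong M'$ no longer obviously forces a preserved ideal. I expect that overcoming this requires a basis-free invariant rooted in the division algebra $\bb{D}_p$: realize injective virtual endomorphisms as scaled similitudes of the anisotropic reduced-norm form, and show that anisotropy forbids any such similitude whose eventual domain $D_\infty = \bigcap_n\mr{dom}(\varphi^n)$ (Remark \ref{rdinfty}) avoids every full-rank ideal. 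Making this obstruction uniform in $k$ --- with Theorem \ref{tsl1tnss} being its $k=1$ shadow --- is the crux.
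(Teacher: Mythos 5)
You should note at the outset that the statement you are attacking is Conjecture \ref{conjsl1alg}: the paper contains no proof of it, proves only the index-$p$ case (Theorem \ref{tsl1tnss}), and explicitly records the general case as open. So there is no argument in the paper to compare yours against, and, as you yourself say, your proposal is not a proof either. The parts of your core step that are sound are exactly the parts already present in the proof of Theorem \ref{tsl11noss}: injectivity of a simple virtual endomorphism via hereditary just-infiniteness (Proposition \ref{p3dimins} and Lemma \ref{lsimtinj}), and $[L:M]=[L:\varphi(M)]$ via Proposition \ref{pisotind}; both transfer verbatim to index $p^k$. Everything after that --- the parametrization of index-$p^k$ sublattices, the ``anisotropic rigidity relation'' meant to replace Lemma \ref{lfrgen}, and the actual construction of a non-trivial $\varphi$-invariant ideal --- is named but neither formulated nor proven, and this is precisely where the paper's method stops working: Lemma \ref{lfrgen} attaches to each of the three types $\Xi_0,\Xi_1,\Xi_2$ a rigid ideal $p[L,L]+p^{s_i}L$ that any isomorphism between index-$p$ subalgebras must respect, and no analogue is available at depth $k\ges 2$, where an isomorphism $M\simeq\varphi(M)$ must be confronted with an unbounded amount of Hermite-form data. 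Your proposal therefore does not advance beyond Theorem \ref{tsl1tnss}; the conjecture remains untouched, and your final paragraph is an accurate description of the open problem rather than a step toward solving it.

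One ingredient of your plan, however, is both correct and provable --- and more cleanly than by the index-$p$ descents you suggest, which would founder exactly on the ambiguity you flag (the $s$-invariants together with $\eta$ do not separate $L_1(s_0,s_1,s_2,\vep_1,0)$ from $L_1(s_0,s_1,s_2,\vep_1,1)$, so Lemmas \ref{lxi012} and \ref{lscip} cannot by themselves certify which isomorphism classes occur as iterated subalgebras). The reduction to $L_0=sl_1(\Delta_p)$ via the contrapositive of Lemma \ref{lsubss} is the Lie-lattice mirror of the paper's own remark following Conjecture \ref{csl1} (via Corollary \ref{cHssthenGss}), and the embedding statement you need is true: for $p$ odd, every full-rank Lie sublattice $L$ of $sl_1(\bb{D}_p)$ satisfies $L\subseteq sl_1(\Delta_p)$. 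Indeed, for trace-zero quaternions one has $xy=-B(x,y)\cdot 1+\tfrac12[x,y]$, where $B$ is the polar form of the reduced norm; the Gram matrix of $B$ on any basis of $L$ equals $\tfrac14\widehat{A}$, where $\widehat{A}$ is the adjugate of the bracket matrix of $L$ (verify this on the basis $(\mb{u},\mb{v},\mb{uv})$ and note that adjugates transform by congruence under Equation (\ref{ebchbasis})), hence it is integral; therefore $\bb{Z}_p\cdot 1+L$ is an order of $\bb{D}_p$, so it is contained in the unique maximal order $\Delta_p$, giving $L\subseteq\Delta_p\cap sl_1(\bb{D}_p)=sl_1(\Delta_p)$. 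Consequently every lattice in the conjecture is isomorphic to a finite-index subalgebra of $sl_1(\Delta_p)$, and Conjecture \ref{conjsl1alg} is equivalent to the single assertion that $sl_1(\Delta_p)$ is not self-similar of any index $p^k$, $k\ges 2$ --- which is the part you have not proven.
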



\section{Proofs of the main theorems}\label{s3dimins}
 We quickly recall Lazard's correspondence, which is the main tool in this section. With a saturable pro-$p$ group $G$ one may associate a saturable $\mathbb{Z}_p$-Lie lattice $L_G$ in the following way: $G$ and $L_G$    are identified as sets, and the Lie operations are defined by
\begin{displaymath}
g+h=\lim_{n \to \infty}(g^{p^n}h^{p^n})^{p^{-n}},  ~ ~ ~  [g,h]_{\mr{Lie}}=\lim_{n \to \infty}[g^{p^n},h^{p^n}]^{p^{-2n}}=\lim_{n\to \infty} (g^{-p^n}h^{-p^n}g^{p^n}h^{p^n})^{p^{-2n}} .
\end{displaymath}
On the other hand, if $L$ is a saturable $\mathbb{Z}_p$-Lie lattice, then the Campbell-Hausdorff formula induces a group structure on $L$; the resulting group is a saturable pro-$p$ group. If this construction is applied to the $\mathbb{Z}_p$-Lie Lattice $L_G$ associated with a saturable group $G$, one recovers the original group. Indeed, the assignment $G\mapsto L_G$ gives an isomorphism between the category of saturable pro-$p$ groups and the category of saturable $\mathbb{Z}_p$-Lie lattices. See \cite[IV (3.2.6)]{Laz65}, \cite[Section 2]{KloLie05} and 
\cite{GSKpsdimJGT} for more details.

\begin{theorem}\label{tGSK2}
Let $p$ be a prime and $G$ be a saturable $p$-adic analytic pro-$p$ group 
of dimension $\mr{dim}(G)\les p$, and $L_G$ be the
$\bb{Z}_p$-Lie lattice associated with $G$. Let $D,N\subseteq G$ be subsets, and denote them by $L_D$, $L_N$ when regarded as subsets of $L_G$.
Then the following holds.
\begin{enumerate}
\item If $D$ is a closed subgroup of $G$ then $D$ is saturable.
\item $D$ is an open subgroup of $G$ if and only if 
$L_D$ is a finite-index subalgebra  of $L_G$. If this is the case, then  $[G:D]=[L_G: L_D]$.
\item The set $N$ is a closed normal subgroup of $G$ if and only if $L_N$ is an ideal  
of $L_G$. 
\item If $D$ is an open subgroup of $G$ and $\varphi: D\rar G$ is a function, then:
 \begin{enumerate}
 \item $\varphi$ is a group homomorphism if and only if $\varphi$ is
 a Lie algebra homomorphism (denoted by $L_\varphi$).
 \item If $\varphi$ is a group homomorphism, then
 $\varphi$ is simple if and only if $L_\varphi$ is simple.
 \end{enumerate}
\end{enumerate}
\end{theorem}

\begin{proof}
Item (1) follows from  \cite[Theorem A]{GSKpsdimJGT} and  \cite[Proposition D]{GSKpsdimJGT}. Let $D$ be an open subgroup of $G$. By (1), $D$ is saturable and $L_D$ is a saturable subalgebra of $L_G$. Since $G$ is a saturable pro-$p$ group, it admits a valuation 
$\omega : G \rightarrow \mathbb{R}_{>0} \cup\{\infty \}$ such that 
$(G, \omega)$ is a saturated pro-$p$ group (cf. \cite[Section 2]{KloLie05}). 
In particular, the sets 
$G_{\mu} = \{ g\in G ~ | ~ \omega(g) \ges \mu \}$, $\mu \in \mathbb{R}_{>0}$, 
are open normal subgroups of $G$ and form a basis
of neighborhoods of the identity. 
It follows that there exists $\nu \in\bb{R}_{>0}$ such that
$G_\nu$ is contained in $D$, so that we have a chain
$G_\nu\subseteq D\subseteq G$ 
of saturable groups.
From Lazard's correspondence
there is an associated
chain $L_{G_\nu}\subseteq L_D\subseteq L_G$ of Lie algebras.
Now, from \cite[Proposition A.2]{KloLie05}, for every $y\in G$ 
(in particular, for every $y\in D$) the multiplicative coset $yG_{\nu}$ 
is equal to the additive coset $y + L_{G_{\nu}}$. Hence, 
$[G : G_{\nu}] = [L_G : L_{G_{\nu}}]$   and  
$[D : G_{\nu}] = [L_D : L_{G_{\nu}}]$, which yields
 $[G : D] = [L_G : L_D]$. Moreover, from this fact and   \cite[Theorem E]{GSKpsdimJGT}  it follows that $D$ is an open subgroup of $G$ whenever $L_D$ is a finite index subalgebra of $L_G$. This establishes (2). Note that (3) follows from \cite[Theorem E]{GSKpsdimJGT}. Part (a) of (4) follows from (1) and Lazard's correspondence. Finally, to establish part (b) of (4) it suffices to observe  that
the $\varphi$-invariance of a subset $N\subseteq G$
is a purely set theoretical notion, namely, it does not depend on
whether we look at $N$ in the group context or in the Lie-algebra context.
\end{proof}

\vspace{5mm}

\noindent
\textbf{Proof of Proposition \ref{pgsstlss2}.}
The proposition is a direct consequence of
Proposition \ref{pssiffve} and Theorem \ref{tGSK2}.\ep

\vspace{5mm}

\noindent
\textbf{Proof of Theorem \ref{tmain3}.}
The Lie lattices presented in the statement form
a subset of the ones 
given in Theorem \ref{tsoluns}   
(where all the isomorphism classes of
3-dimensional unsolvable $\bb{Z}_p$-Lie lattices are represented in a non-redundant way). 
More precisely,
it is the subset of the residually nilpotent lattices  
(cf. Lemma \ref{lgenGnL}). 
Since $p$ is greater than the dimension of the groups and algebras involved,
it follows from \cite[Theorem B]{GSKpsdimJGT} that the groups given in the statement
constitute the claimed complete and irredundant list.
The conclusions about self-similarity of index $p$ follow
directly from Proposition \ref{pgsstlss2} and Theorem \ref{tsoluns}.\ep

\vspace{5mm}

\noindent
\textbf{Proof of Theorem \ref{tmainsl2}.}
For $p\ges 5$, respectively, $p\ges 3$, 
the groups $SL_2^{\sylow}(\bb{Z}_p)$, respectively, $SL_2^1(\bb{Z}_p)$
are 3-dimensional unsolvable saturable 
$p$-adic analytic pro-$p$ groups.
The theorem follows from Proposition \ref{pgsstlss2}
and Theorems \ref{tssindeta0} and \ref{tsl2} (see also Theorem \ref{tGSK2} and,
for the $\bb{Z}_p$-Lie algebras associated with the groups involved, 
\cite[Section 7.3]{GSKpsdimJGT}, \cite[page 158]{IlaZfunSL2} and 
\cite[Theorem B, item (3)]{GSpsat}).\ep

\vspace{5mm}

\noindent
\textbf{Proof of Theorem \ref{tmainsl1}.}
For $p\ges 5$, respectively, $p\ges 3$, 
the groups $SL_1^1(\Delta_p)$, respectively, $SL_1^2(\Delta_p)$
are 3-dimensional unsolvable saturable
$p$-adic analytic pro-$p$ groups.
The theorem follows from Proposition \ref{pgsstlss2}
and Theorem \ref{tsl1tnss} 
(for the $\bb{Z}_p$-Lie algebras associated with the groups involved,
see \cite[Section 7.3]{GSKpsdimJGT} and \cite[Section 2]{KloSL1}).
\ep

\begin{corollary}\label{ccomppadic}
Let $p\ges 3$ be a prime and let $G$ be a compact $p$-adic analytic group
whose associated $\bb{Q}_p$-Lie algebra $\cl{L}_G$ is isomorphic to 
$sl_2(\bb{Q}_p)$. Then $G$ is self-similar.
\end{corollary}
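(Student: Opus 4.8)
The plan is to reduce the statement to the already-established self-similarity of $3$-dimensional unsolvable $\bb{Z}_p$-Lie lattices with $sl_2(\bb{Q}_p)$-fibre (Theorem \ref{tssindeta0}), following the same pattern as the proof of Corollary \ref{cpadic2ss}. First I would record that the hypothesis forces $\mr{dim}(G) = 3$, since the associated $\bb{Q}_p$-Lie algebra $sl_2(\bb{Q}_p)$ is $3$-dimensional and the dimension of a compact $p$-adic analytic group equals the dimension of its Lie algebra. In particular $\mr{dim}(G) = 3 \les p$ for every $p\ges 3$, which is exactly the bound needed to invoke Proposition \ref{pgsstlss2}.

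Next I would pass to a good open subgroup. By \cite[Theorem 8.32]{DixAnaProP}, $G$ contains an open (hence finite-index) uniform subgroup $H$; such $H$ is a torsion-free $p$-adic analytic pro-$p$ group with $\mr{dim}(H) = \mr{dim}(G) = 3$, and it is saturable. (For $p\ges 5$ saturability is automatic from $\mr{dim}(H) = 3 < p$ by \cite[Theorem A]{GSKpsdimJGT}; for $p = 3$, where $\mr{dim}(H) = p$, it is the uniformity of $H$ that guarantees saturability, since uniform pro-$p$ groups are saturable.) Let $L_H$ be the $\bb{Z}_p$-Lie lattice associated with $H$ under Lazard's correspondence. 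Since $H$ is open in $G$, the two groups share the same associated $\bb{Q}_p$-Lie algebra, and for the saturable group $H$ one has $L_H \otimes_{\bb{Z}_p}\bb{Q}_p$ isomorphic to this common Lie algebra; hence $L_H\otimes_{\bb{Z}_p}\bb{Q}_p \simeq sl_2(\bb{Q}_p)$. In particular $L_H$ is a $3$-dimensional lattice, and it is unsolvable, because solvability is preserved by Lazard's correspondence (cf. \cite[Theorem B]{GSpsat}) and by $\otimes\bb{Q}_p$, while $sl_2(\bb{Q}_p)$ is not solvable.

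With this in place the conclusion follows by a short chain of implications. Theorem \ref{tssindeta0} applies to $L_H$ and shows that $L_H$ is self-similar, say of index $p^k$. By Proposition \ref{pgsstlss2} (whose hypotheses hold since $H$ is saturable with $\mr{dim}(H) \les p$), $H$ is self-similar of index $p^k$. Finally, since $H$ has finite index in $G$, Corollary \ref{cHssthenGss} yields that $G$ is self-similar (of index $p^k[G:H]$), which completes the argument.

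The step I expect to require the most care is the boundary case $p = 3$, where $\mr{dim}(H) = p$: here one must ensure that the chosen open subgroup is genuinely saturable (which is why I would insist on taking $H$ uniform rather than merely torsion-free), and one must invoke the dimension bounds of Proposition \ref{pgsstlss2} and Theorem \ref{tGSK2} in their $\mr{dim}\les p$ form rather than the strict one. The identification $L_H\otimes_{\bb{Z}_p}\bb{Q}_p \simeq sl_2(\bb{Q}_p)$ is the other point to state carefully, though it is a standard feature of Lazard's correspondence for saturable groups together with the fact that open subgroups share the same $\bb{Q}_p$-Lie algebra; in essence all the genuine content already resides in Theorem \ref{tssindeta0}, and the corollary is a transfer of that result along Lazard's correspondence and Corollary \ref{cHssthenGss}.
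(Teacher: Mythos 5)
Your proposal is correct and follows essentially the same route as the paper: pass to an open saturable pro-$p$ subgroup $H$ of finite index, identify $L_H\otimes_{\bb{Z}_p}\bb{Q}_p\simeq sl_2(\bb{Q}_p)$, and then chain Theorem \ref{tssindeta0}, Proposition \ref{pgsstlss2}, and Corollary \ref{cHssthenGss}. The paper's proof is terser (it takes the existence of a saturable open subgroup and the identification $\cl{L}_G = L_H\otimes_{\bb{Z}_p}\bb{Q}_p$ directly from the construction in \cite[Section 9.5]{DixAnaProP}), whereas you spell out the uniform-subgroup and $p=3$ saturability details explicitly; both arguments are sound.
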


\begin{proof}
Since $G$ is  a compact $p$-adic analytic group, there exists an open subgroup
$H$ of $G$ that is a saturable pro-$p$ group; moreover, $L_H\otimes_{\bb{Z}_p}\bb{Q}_p \simeq  \cl{L}_G \simeq sl_2(\bb{Q}_p)$ (cf. \cite[Section 9.5]{DixAnaProP}). Now the  corollary follows from
Theorem \ref{tssindeta0}, Proposition \ref{pgsstlss2} and 
Corollary \ref{cHssthenGss}.
\end{proof}

\begin{remark}\label{rleveltrans}
Let $p\ges 3$ be a prime and let $G$ be 
a 3-dimensional unsolvable
torsion-free $p$-adic analytic pro-$p$ group that is self-similar of index $p$.
Take a faithful self-similar action of $G$ on $T_p$ that is transitive on the 
first level. We claim that this action is level transitive. Indeed, there exists a basis $(x_0,x_1,x_2)$ of $L_G$ such that $L_D= \langle x_0, px_1, x_2\rangle$ is a subalgebra of $L_G$ of index $p$ and the homomorphism $\varphi :L_D \rar L_G$ given by 
$\varphi(x_0) = x_0$, $\varphi(px_1)= x_1$ and 
$\varphi(x_2) =px_2$ is a simple virtual endomorphism of index
$p$ (cf. Lemma \ref{li3dssst}). By Theorem \ref{tGSK2}, $\varphi: D\rar G$ is a simple
virtual endomorphism of $G$ of index $p$. It is easy to see that $\varphi(D_{n+1})\not\subseteq D_{n+1}$ for all $n\ges 0$, where $D_n$ is the domain of the $n$-th power of $\varphi$. By Lemma \ref{lphiregular},  $\varphi$ is regular,
so the action of $G$ on $T_p$ is level transitive 
(cf. \cite[Proposition 4.20]{NekVirEnd}).
\end{remark}


\begin{footnotesize}

\providecommand{\bysame}{\leavevmode\hbox to3em{\hrulefill}\thinspace}
\providecommand{\MR}{\relax\ifhmode\unskip\space\fi MR }
\providecommand{\MRhref}[2]{%
  \href{http://www.ams.org/mathscinet-getitem?mr=#1}{#2}
}
\providecommand{\href}[2]{#2}

\end{footnotesize}

\end{document}